\numberwithin{counter}{subsection}
\newaliascnt{lem}{theorem}
\newaliascnt{def}{theorem}
\newaliascnt{prop}{theorem}
\newaliascnt{coro}{theorem}
\newaliascnt{rmk}{theorem}
\newtheorem*{property*}{Property:}
\newtheorem*{question*}{Question:}
\newtheorem*{assumptions*}{Assumptions:}
\newcommand{\D}{\partial}
\newcommand{\R}{\mathbb{R}}
\newcommand{\dt}{{\Delta t}}
\newcommand{\dx}{{\Delta x}}
\renewcommand{\P}{\mathbb{P}}
\renewcommand{\(}{ \left(}
\renewcommand{\)}{ \right)}
\newcommand\independent{\protect\mathpalette{\protect\independenT}{\perp}}
\def\independenT#1#2{\mathrel{\rlap{$#1#2$}\mkern2mu{#1#2}}}
\definecolor{Question}{HTML}{F77409}
\definecolor{Problem}{HTML}{F01A1A}
\definecolor{Comment}{HTML}{011AF8}
\definecolor{Check}{HTML}{08AB06}
\title{Exact description of SIR-Bass epidemics on 1D lattices}
\author[1]{Gadi Fibich\thanks{fibich@tau.ac.il}}
\author[1]{Samuel Nordmann\thanks{samnordmann@gmail.com}}
\affil[1]{Department of Applied Mathematics, Tel Aviv University}
\begin{document}

\maketitle

\paragraph{Keywords:} Epidemiology ; SIR ; Bass model ; Lattice ; Network ; Stochastic process ; Agent-based model ; Spatiotemporal propagation ; Diffusion of new products ; Spreading\\

\noindent {\bf MSC2020 Class. No:} 92D30, 90B60

\begin{abstract}

This paper is devoted to the study of a stochastic epidemiological model which is a variant of the SIR model to which we add an extra factor in the transition rate from susceptible to infected accounting for the inflow of infection due to immigration or environmental sources of infection. This factor yields the formation of new clusters of infections, without having to specify a priori and explicitly their date and place of appearance.

We establish an {exact deterministic description} for such stochastic processes on 1D lattices (finite lines, semi-infinite lines, infinite lines) by showing that the probability of infection at a given point in space and time can be obtained as the solution of a deterministic ODE system on the lattice. Our results allow stochastic initial conditions and arbitrary spatio-temporal heterogeneities on the parameters.

We then apply our results to some concrete situations and obtain useful qualitative results and explicit formulae on the macroscopic dynamics and also the local temporal behavior of each individual. In particular, we provide a fine analysis of some aspects of cluster formation through the study of {patient-zero problems} and the effects of {time-varying point sources}.

Finally, we show that the space-discrete model gives rise to new space-continuous models, which are either ODEs or PDEs, depending on the rescaling regime assumed on the parameters.

\end{abstract}

\tableofcontents

\section{Introduction}
\subsection{Goal of the paper}

This paper is devoted to the study of a stochastic epidemic model on 1D (directed or undirected) lattices with a general epidemic source term and arbitrary heterogeneity on the parameters and initial conditions.
Our model is a variant of the stochastic SIR model, originally introduced in its deterministic version by Kermack and McKendrick~\cite{Kermacka}, which forms the framework of almost any model used in epidemiology. Despite the extensive literature on the SIR model (see, e.g., the books of Bailey~\cite{Bailey1975b}, Anderson and May~\cite{Anderson1991}, Mollison~\cite{Mollison1995} and the review of Heathcote~\cite{H2000a}), some fundamental and practical questions remain open. In the current context of the propagation of the COVID-19, an in-depth study of this model and its variants is a major challenge.

In addition to the usual mechanism of contagion occurring in epidemics, we also assume the presence of an epidemiological \emph{source} term, qualified as ``\emph{external influences}''. This term acts as an additional mechanism that leads to the transition from susceptible to infected, which does not depend on the level of infection in the population, and accounts for the inflow of infection due to immigration or environmental sources of infection. In our framework, it is allowed to depend on space and time. From a modeling point of view, dealing with external influences is important since they yield the formation of new clusters of infections, without having to specify a priori and explicitly their date and place of appearance.
Such external influences are not traditionally included in the $SIR$ model and refer more to the Bass model~\cite{Bass1969}, originally introduced to describe the diffusion of new products.
In the marketing context, {external influences} account for the spontaneous adoptions of the product induced by advertising or mass media, while ``social imitation" and ``word of mouth'' are formally regarded as a ``contagion'' mechanism. 
In turn, the population of \emph{potential adopters} of a product (resp. \emph{adopters}) are seen as formal analogs of the population of \emph{susceptible} (resp. \emph{infected}) in the epidemiology context. Although each modeling situation has its own specificities, we keep a unified approach to both phenomena.
In what follows, we use more frequently the terminology borrowed from epidemiology, but our considerations can be transposed to the context of the adoption of a new product. Let us mention that the \emph{recovery} of infected individuals operating in the $SIR$ model is also relevant in the marketing context even though it has not been included in the Bass model until recently~\cite{Fibich2016}.
Similar models are also used in other social situations: propagation of riots~\cite{Berestycki2015,Epstein2002}, rumours~\cite{DALEY1964,Dietz1967}, etc.

\paragraph*{}

The main goal of the present paper is to establish an \emph{exact deterministic description} for such stochastic SIR-Bass epidemics on 1D lattices. Specifically, we show that the probability of infection at a given point in space and time can be obtained as the solution of a deterministic ODE system on the lattice. We then apply our results to some concrete situations and obtain useful qualitative results and explicit formulae. In particular, we provide a fine analysis of some aspects of cluster formation through the study of \emph{patient-zero problems} and the effects of \emph{time-varying point sources}.

For simplicity and a clear presentation of our results and methodology, we focus in this paper on 1D lattices (i.e., finite, semi-infinite, and infinite lines) where the infection only occurs from a node to its direct neighbors.
Our approach can be adapted without difficulties to the case where the contact network is a tree (i.e., a graph with no cycles). Some of our ideas may also apply to a more general context, which will be the subject of future works.

As a counterpart to the strong assumption on the structure of the graph, we try to keep the assumptions on the parameters and initial conditions as broad as possible by allowing any type of spatial-temporal heterogeneity of the coefficients. Understanding the effect of such local heterogeneities, even for simple networks, is still a major challenge in epidemiology~\cite[Section~1]{Pellis2015b}.

We emphasize that our results provide an exact description of the process at the individual-scale, that is, we describe both the macroscopic dynamics of the epidemics and the local temporal behavior of each individual.
This takes us one step further than the usual description at the population-scale through aggregate quantities such as the expected fraction of infected individuals in the population. Such a spatial-temporal description at the local scale is required for predicting population-level dynamics from individual-level observations, all the more in the situation of strong heterogeneities.

\paragraph*{Outline.} We give a brief state of the art of the mathematical epidemiology literature in Section~\ref{sec:context}, which allow us to present the context of our study, to motivate it, and to highlight the novelty of our results.
Our mathematical framework and its link with classical models are presented in Sections~\ref{sec:Framework} and~\ref{sec:Non-spatial}.

We first present and prove our results dealing with 1D one-sided lattices in which the contagion can only occur in one direction.
We focus in Section~\ref{sec:1D1S} on the Bass model when there is no recovery. Section~\ref{sec:1D1S_SIR} deals with the full SIR-Bass model.
Our main results (\autoref{th:1D1SWithP}, \autoref{th:1D1SWithP_R}) establish that the probability of a certain node to be susceptible at some time satisfies a differential equation which is continuous in time and discrete in space. This provides an exact spatial-temporal deterministic description of the stochastic process, for any type of heterogeneity on the parameters and stochastic initial conditions.
We then use these results to perform a further analysis and derive explicit formulae in several typical situations, such as the spatially homogeneous case (\autoref{rmk:1D1S_NonSpatialSolution}, \autoref{th:Coro_Homogeneous_1D1S_General}), the patient-zero problem (\autoref{th:coro_patient_zero}, \autoref{th:coro_patient_zero_SIR}), and the case of a time-varying point sources (\autoref{th:SourceTerm}, \autoref{th:SourceTerm_R}). We also propose a further analysis on the speed of propagation of epidemics (\autoref{th:SpeedPropagation1D1S}) as well as a detailed comparison of our model with the classical aggregate $SIR$ model (Section~\ref{sec:Comparison_SIR}).

In Section~\ref{sec:1D2S}, we extend our results to two-sided 1D lattices (corresponding to an undirected underlying graph) in which epidemics can propagate in both directions. We derive a new useful formula that expresses the probability of infection in a two-sided lattice in terms of the probabilities of infections in one-sided lattices. 
In Section~\ref{sec:1D1S_SpaceContinuous}, we discuss two different space rescalings giving rise to either a limiting space-continuous ODE or a limiting PDE coupling time and space.
Finally, we give some concluding remarks in Section~\ref{sec:conclusion}.

%We allow any kind of temporal and spatial heterogeneity on the parameters.

 %
%
%It is particularly at stake to unfold the spatial aspects of the dynamics: considering that the links between individuals in the population are described by a graph, how does the spatial structure of the graph affect the spatial propagation?
%From a modeling point of view, it is also important to be able to deal with a general epidemic source (by which an individual can spontaneously become infected) since the spatial and temporal distribution of infection clusters play a determining role in the overall dynamics of the epidemic. 
%For the same reason, it is also important to provide tools capable of dealing with general heterogeneous initial conditions.

\subsection{Context and motivation}\label{sec:context}

To present the context of our work, we provide a brief overview of the mathematical study of epidemiological models.
Most epidemiological models proposed in the literature fall into two broad categories: a deterministic approach using either ODEs~\cite{Kendall1965,Hethcote1989} or PDEs of the reaction-diffusion type~\cite{DiekmannBIS,Diekmann1978,Thieme1977,Ruan}, and a stochastic approach using individual-centered Poisson processes (agent-based models)~\cite{Bailey1975b,Anderson1991}. 
The deterministic and stochastic approaches have mostly developed in parallel in the literature.
In general, deterministic models can be obtained as the ``mean-field'' limits~\cite{Kurtz1970,Simon2005} of stochastic models. 
Mean-field models describe the average dynamics and typically apply to sufficiently large populations where stochastic fluctuations and individual variabilities are negligible~\cite{Kurtz1971}. 
They have the advantage of being more amenable to analysis and to give a clear idea of some qualitative aspects of epidemic dynamics. Deterministic models are particularly useful to study the spatial speed of propagation and the shape of the propagation front of epidemics~\cite{Hosono1995,Diekmann1979,Aronsonb}. On the other hand, stochastic models are capable of reproducing observed data way more accurately, both quantitatively and qualitatively, see, e.g.,~\cite[Section~5.1 \& 7]{Bailey1975b} and~\cite{Mollison1995}. The main reasons for this are, firstly, that stochasticity may have a dramatic influence on the overall dynamics since small populations are involved (for example during the outbreak of an epidemic) and, secondly, that the stochastic framework is readily well suited for the incorporation of strong heterogeneities or complex social structures which are known to be of tremendous importance in epidemics propagation.

The simplest models, referred to as \emph{aggregate} or \emph{compartmental} models, assume that the population is homogeneously mixed~\cite[Section~5.10]{Bailey1975b}, that is, the connectivity between two individuals is constant among all pairs of individuals.
In this situation, individuals are interchangeable and so the behavior of the system can be described by aggregate quantities (such as the expected fraction of infected individuals).
The classical analysis of these models relies on strong symmetry assumptions on the parameters and initial conditions~\cite{Kurtz1970,Kurtz1971}.

However, many studies reveal that homogeneous mixing only holds in small groups but not in large populations, see e.g.~\cite[Section~5.10]{Bailey1975b} and~\cite{Keeling2005a}.
Intuitively, at the scale of a city, it does not make sense to assume that an individual interacts as much with members of his family as with a socially unrelated individual living on the other side of the city. 
The limited number of interactions of each individual tends to hinder the spread of infection and to cause the formation of localized clusters. For these reasons, the network within which social interactions occur has a dramatic impact on the epidemic propagation, and so aggregate models turn out to be inaccurate to describe the spatial-temporal dynamics of the epidemics.

A more elaborate approach is to consider that each individual interacts only with a relatively small subset of the population.
The connectivity between individuals is usually encoded by a weighted graph, called \emph{contact network} in the epidemiology context~\cite{Keeling1999,Andersson2000}. We point out that this formalism assumes that social interactions are fixed once and for all, which is not a problem when considering a sufficiently short epidemic episode (evolving networks have also been proposed, see~\cite{Enright2018a,Jiang2019} and references therein).
Contact networks are typically sparse and may embed various social variables such as place of residence, age, profession, etc.
The different classes of contact networks considered in the literature can be roughly classified according to their \emph{clustering} level~\cite{Badham2010,Keeling2005b} reflecting the extent to which the system is capable of generating and maintaining localized clusters of infections. For example, \emph{lattice networks}~\cite{Sat61994,Sazonov2008,Matsuda1992} regard the population as placed on a regular grid of points with spatially localized connections, yielding a high clustering level; on the contrary, \emph{random networks}~\cite{Decreusefond2012,Ball2012,Andersson1998} have a low clustering level since connections between individuals occur randomly regardless of their locations.
Lattices and random networks appear as two opposite extreme instances, each one focusing on certain aspects of the dynamics observed in epidemics and discarding others. Small-world or scale-free networks have been proposed as examples of more realistic configurations, combining both localized and rare long-range connections, which display intermediate clustering levels. For more details, we refer the reader to the reviews of Keeling \& Eames~\cite{Keeling2005a}, Britton~\cite{Britton2010} and Danon et. al.~\cite{Keeling2011}. Let us emphasize here that lattices, especially 2D lattices, are the physically relevant contact networks when considering contagion phenomena that primarily depend on the physical distance between the infected and susceptible individuals, which happens to be the case in the propagation of some diseases such as Measles~\cite{Grenfell2001} or the diffusion of some products such as solar panels~\cite{Graziano2015,Bollinger2012}.

Most of the approaches developed for studying complex networks make strong assumptions of homogeneity and symmetry at the individual-scale and are mainly concerned with aggregate quantities (such as the total number of infected) which describe the dynamics at the population level but fail to precisely grasp the dynamics at the individual-scale. It remains nevertheless a major challenge to understand the local behavior and the effects of strong heterogeneity~\cite[Section~1]{Pellis2015b}.
For example, the study of the \emph{patient-zero} problem (where the epidemics spread from a single infected individual at initial time) or the influence of a spatially localized source of infection gives significant insights into the spatial propagation of epidemic clusters.

Several approximation methods have been proposed to deal with complex networks with a high level of heterogeneity. The goal of these approaches is to provide deterministic approximations that go beyond the insufficient mean-field approximation, but that are still less computationally demanding and more amenable to analysis than the full stochastic models.
The pair-approximation method~\cite{Rand2009,Keeling1999} consists in formally neglecting the possible intercorrelation between the nodes induced by cycles in the contact network. Under this formal approximation, the dynamics can be described by a system of ODEs featuring aggregate quantities such as the average number of neighbors per node and the ratio of triples of nodes which form an interconnected triangle (this quantity reflects the level of clustering of the contact network). 
While this method offers great flexibility and results which are quite in agreement with numerical simulations of the stochastic process, its theoretical ground is far from being completely understood.
Other approximations have been proposed~\cite{House,Bansal2007}, such as degree-based models, probability generating function (PGF) formalism, edge-based compartment modeling, but all these methods can be derived from the pair-approximation method~\cite{House}.

Regarding theoretical results and exact deterministic descriptions of the stochastic process, most of the studies, if not all, assume a tree-like structure on the contact network at some level.
If the contact network is a tree-graph (i.e. a loopless graph), it is well known that the pair-wise approximation is exact~\cite{Sharkey2015} and yields an exact description of the stochastic process, namely, the probabilities of infection satisfy a system of ODEs. This allows a fine theoretical analysis of the dynamics.
Results are also available when the graph has a local tree-like structure (such as random \emph{configuration} networks~\cite{Keeling2011}) at the limit as the population size becomes large and under homogeneity assumptions~\cite{Decreusefond2012,Ball2013,Keeling2011}.

As mentioned in the introduction, the present paper deals with 1D lattices, which are particular cases of tree-graphs, and for which exact equations are available.
%Our work is thus in line with the approach of~\cite{Sharkey2015} from which we recover and refine some of the main results. 
The novelty of our approach is in the combination of the following ingredients. Firstly, we study the SIR-Bass model, that is, we add to the classical SIR model an external source term accounting for a spontaneous inflow of infection; secondly we allow any heterogeneity on the coefficients and allow stochastic initial conditions; thirdly, we use the obtained exact deterministic description to perform a further analysis at a local scale and derive explicit formulae on some specific typical epidemiological instances; finally we show that the space-discrete model gives rise to new space-continuous models, which are either ODEs or PDEs, depending on the rescaling regime assumed on the parameters.

\section{Framework}\label{sec:Framework}
In this paper, we consider time-discrete stochastic processes that converge to time-continuous processes as the time step tends to zero. Discrete and continuous models each have their own advantages: the former can be simulated numerically directly, while the latter is more amenable for analysis.

%and avoids some mathematical difficulties in its definition

\subsection*{Time-discrete process}
%We define the time-discrete process as follows. 
Let us fix a time-step $0<\dt\ll1$ and consider random processes represented by a family of random variables $\left((x_k^n)_{k\in\mathcal{K}}\right)_{n\in\mathbb{N}}$, where $x_k^n$ represents the \emph{state} of individual $k\in \mathcal{K}$ at time $t^n:=n\dt$. The set $\mathcal{K}$ can be finite $\mathcal{K}=\{1,\dots,K\}$, infinite $\mathcal{K}=\mathbb{Z}^d$, or semi-infinite $\mathcal{K}=\mathbb{N}$.
The state of individual/node $k$ at time $t^n$ can be \emph{susceptible} $(x_k^n=s)$, \emph{infected} $(x_k^n=i)$, or \emph{recovered} $(x_k^n=r)$.
We define the following events:
$$
\begin{gathered}
{S}_k^n\text{ is the event that }x_k^n=s,
\\
 {I}_k^n\text{ is the event that }x_k^n=i,
 \\
  {R}_k^n\text{ is the event that }x_k^n=r,
\end{gathered}
$$
and denote their probabilities by
\begin{equation*}
{}[S_k^n]=\P(S_k^n),\qquad [I_k^n]=\P(I_k^n),\qquad [R_k^n]=\P(R_k^n).
\end{equation*}
The events $S_k^n$, $I_k^n$, $R_k^n$ are disjoint and complementary, and so 
\begin{equation}\label{S+I+R=1}
{}[S_k^n]+[I_k^n]+ [R_k^n]\equiv1,\qquad \forall n\geq 0,\ k\in\mathcal{K}.
\end{equation}

We assume that the stochastic dynamics in the time-interval $(t^n,t^{n+1})=(n\dt,(n+1)\dt)$ is governed by the following mechanisms:
\begin{itemize}
\item If node $k$ is susceptible at $t^n$, it can become infected
\begin{itemize}
\item[-] by a source term (``external influence'') at a rate of $p_k^n$,
\item[-] and by contagion from infected individual $i$ (``internal influence'') at a rate of $q_{ik}^n$.
\end{itemize}
These effects are additive.
\item If a node $k$ is infected at $t^n$, it can recover at a rate of $r_k^n$.
\item If node $k$ is recovered at $t^n$, it remains so for all later times.
\end{itemize}
The transition probabilities are thus given by
\begin{subequations}\label{Assumption_Generale_Intro_BIS}
\begin{equation*}\label{Assumption_Generale_1_Intro_BIS}
 \tag*{(\ref{Assumption_Generale_Intro_BIS}a)}
\P\left(I_k^{n+1}\big\vert\bm{X}^n \right)=
\left\{\begin{aligned}
&\dt\left(p_k^n+ \sum_{i\in \mathcal{K}}q_{ik}^n \mathds{1}_{I_{i}^n}\right),&&\quad\text{if }x_k^n=s,\\
& 1-r_k^n\dt,&&\quad\text{if }x_k^n=i,\\
& 0 ,&&\quad\text{if }x_k^n=r,
\end{aligned}\right.
\end{equation*}
where $\bm{X}^n=(x_k^n)_{k\in\mathcal{K}}$ is the state of the network at time $t^n$,
$$\mathds{1}_{I_{i}^n}=
\begin{cases}
1, &\quad if $x_i^n=i,$\\
0, &\quad else,
\end{cases}$$
and by
\begin{equation*}\label{Assumption_Generale_2_Intro_BIS}
 \tag*{(\ref{Assumption_Generale_Intro_BIS}b)}
\P\left(R_k^{n+1}\big\vert \bm{X}^n \right)=
\left\{\begin{aligned}
&0,&&\quad\text{if }x_k^n=s,\\
& r_k^n\dt,&&\quad\text{if }x_k^n=i,\\
& 1 ,&&\quad\text{if }x_k^n=r.
\end{aligned}\right.
\end{equation*}
\end{subequations}
The parameters $p_k^n$, $q_{ik}^n$, $r_k^n$ are nonnegative and may depend on space $k$ and time $n$.
We also assume that
$\bm{X}^{n+1}$ is a family of random variables that are mutually independent when conditioned with respect to $X^n$. 
This way, the process is well defined.
The matrix $(q_{ik})_{\mathcal{K}}$ represents the weighted directed edges of the contact graph between the nodes.

%(and also that the law of $X^{n+1}$is the one of $X^n$ plus an error of order $O(\dt)$).

%The presence of the terms $\dt$ in~\ref{Assumption_Generale_1_Intro_BIS} and~\ref{Assumption_Generale_2_Intro_BIS} accounts for the fact that the transitions between two states occur at a certain fixed time rate.% It is classical that, at the limit when $\dt\to0$, the transitions are governed by an exponential law and the process converges to a Poisson process.

The state variables $(x_k^n)$ are coupled through the sum $\sum_{i\in \mathcal{K}}q_{ik}^n\mathds{1}_{I_{i}^n}$, see~\ref{Assumption_Generale_1_Intro_BIS}. 
This term, which embeds the structure of the graph, has a central role in the analysis.
As noted previously, in this paper we focus on the case of 1D (directed or undirected) lattices, where $q_{ik}^n\equiv0$ when $i\neq k\pm1$.

\subsection*{Initial conditions}

% Our starting point is the initial conditions $$(x_k^0)_{k\in\mathcal{K}}=(x_k(t=0))_{k\in\mathcal{K}},$$ which are uncorrelated random variables taking values in $\{s,i,r\}.$ 

The starting point of the stochastic dynamics~\eqref{Assumption_Generale_Intro_BIS} is the initial conditions $$(x_k^0)_{k\in\mathcal{K}},\qquad x_k^0:=x_k(t=0),$$ which are assumed to be uncorrelated random variables taking values in $\{s,i,r\}.$ Thus, in terms of the probabilities $[S_k^n]$, $[I_k^n]$ and $[R_k^n]$, we consider the initial conditions
\begin{equation}\label{hyp:initial_cond_uncor}
\left\{\begin{aligned}
&[S_k^0],[I_k^0],[R_k^0]\in[0,1],&&\quad k\in\mathcal{K},\\
&[S_k^0]+[I_k^0]+[R_k^0]\equiv 1,&&\quad k\in\mathcal{K},\\
&x^0_k\independent x^0_l&&\quad k\neq l\in\mathcal{K}.
%&[S^0]_k,[I^0]_k,[R^0]_k\independent [S^0]_l,[I^0]_l,[R^0]_l,&&\quad k\neq l.
\end{aligned}\right.
\end{equation}

%As a first step, it is natural to consider \emph{pure} initial conditions, that is, to impose that $x_k^0$ is determined with probability $1$ among the possible states $\{s,i,r\}$. A more general case is to consider stochastic initial conditions and to provide a probability law for the family of random variables $(x_k^0)_{k\in\mathcal{K}}$. In this paper, we shall consider the general case of stochastic initial conditions. The case of deterministic initial conditions is then obtained as a special case.
%, that is, we do not assume necessarily that the initial states are \emph{pure states}, neither that the $x_k^0$ are independent.
In some applications, it is natural to consider \emph{deterministic} (\emph{pure}) initial conditions, where the initial state of each node is given in a deterministic way. 
This is a special case of~\eqref{hyp:initial_cond_uncor}, when $x_k^0$ is determined with probability $1$ among the possible states $\{s,i,r\}$.
For example, in the $SIR$ model, it is standard to assume that the epidemics start from ``patients zero'', and so that initially, all nodes are susceptible, except for a few which are infected.
In the Bass model, the standard assumption is that the whole population is susceptible (non-adopters) when the new product is first introduced into the market.

In this paper, we analyze the more general case~\eqref{hyp:initial_cond_uncor} where the initial state of each node is given by a random variable.
In addition to being more general from a mathematical point of view, dealing with stochastic initial conditions is important from an application point of view. 
%Firstly, in practice, it is common that the initial conditions are not known in a deterministic way, but only through a statistic available from the data. 
For example, in the context of epidemiology, it may be that the only available information is the percentage of the population in each area which are infected. Similarly, in the context of the diffusion of a new product, sometimes only the fraction of the population that has adopted the product in each area is available, rather than a precise description of the state of each individual.

%Secondly, the variable $x_k$ can be considered not to represent the state of an individual but rather a lumped variable representing the state of a group of individuals. We can think of a town, a neighborhood, etc. In this case, using stochastic initial conditions $x_k^0$ is a way to encode the variability within this group of individuals.
%Let us mention that our interpretation of the process given by~\ref{Assumption_Generale_1_Intro_BIS}-\ref{Assumption_Generale_2_Intro_BIS} is still valid: the role of $p$ and $r$ is straightforward, and $q_{ik}$ represents the average contagion factor of the lumped variable $i$ on the lumped variable $k$.

%It is also relevant from an application point of view to allow for the possibility that $(x_k^0)$ are \emph{correlated} (i.e. not independent) random variables. For example, in the epidemiological context, it is more likely that the direct neighbors of an infected person are themselves infected. Similarly, in the context of the diffusion of a new product, owing to word-of-mouth and social imitation mechanisms, an individual who has adopted the product encourages his or her relatives to adopt the product in turn.
%Our analysis applies to the case when the initial conditions are correlated.\note{change this paragraph according to what we are indeed capable of proving}

\subsection*{Time-continuous limit}
As noted, the $n$ superscript denotes a state variable evaluated at the discrete time $t^n=n\dt$. Throughout the paper, we assume the following:
\paragraph*{Time-continuous limit assumptions.}
\textit{
 The parameters $p_k^n$, $q_k^n$ and $r_k^n$ in~\eqref{Assumption_Generale_Intro_BIS} converge to some functions $p_k(t)$, $q_k(t)$ and $r_k(t)$ respectively, as $\dt\to0$ and $n\to+\infty$ so that $t=n\dt$ is constant. 
}
\paragraph*{}
Then, we classically have that the time-discrete process $(x_k^n)$ converges to a time-continuous Poisson-type process $(x_k(t))$.
We use the same notations as in the time-discrete setting, by replacing the dependence on $n$ with a dependence on $t$. Thus,
\begin{gather*}
{S}_k(t)\text{ is the event that }x_k(t)=s,
\\
 {I}_k(t)\text{ is the event that }x_k(t)=i,
 \\
  {R}_k(t)\text{ is the event that }x_k(t)=r,
\end{gather*}
and
\begin{equation*}
[S_k](t)=\P(S_k(t)), \qquad [I_k](t)=\P(S_k(t)),\qquad [R_k](t)=\P(S_k(t)).
\end{equation*}
The time-continuous process is a Poisson-type process, defined as follows. The transition of individual $k$ from \emph{susceptible} to \emph{infected} at time $t$ occurs through an exponential law with the rate
$
 \lambda^{s\to i}_k(t)= p_k(t)+\sum_{i\in\mathcal{K}}q_{ik}(t)\mathds{1}_{I_{i}(t)},
$
and from \emph{infected} to \emph{recovered} with the rate
$
 \lambda^{i\to r}_k(t)=r_k(t).
$

\section{Aggregate models}\label{sec:Non-spatial}

%
%More precisely, consider that the population is placed on a finite interval $\mathcal{K}:=\{1,\dots, K\}$ for some integer $K>0$. Assume that $p_{k}^n\equiv p$, $q_{kj}^n\equiv \frac{q}{K}$, $r_k^n\equiv r$, $[S^0_k]\equiv S^0$, $[I^0_k]\equiv I^0$ and $[R^0_k]\equiv R^0$, i.e., all parameters and initial conditions are uniform and the contagion factor $q$ is normalized by the size of the population. Then, it is well known (see e.g.~\cite{Niu2002}\ref{check}) that denoting $S_K(t)$, $I_K(t)$ and $R_K(t)$ the proportion of suceptibles, infected and recovered individuals in the population respectively, they converge in probability as $K\to+\infty$ to the solution of the  

The aggregate version of the above model~\eqref{Assumption_Generale_Intro_BIS} is obtained when the parameters and initial conditions do not depend on $k$, which corresponds to the case where the underlying graph is homogeneous and complete. 
In this case, in the limit as the population size becomes infinite, the dynamics is governed by the aggregate SIR-Bass model~\cite{Fibich2016} which is given by the system of ODEs
\begin{equation}\label{SIR-Bass}
\left\{\begin{aligned}
&S'(t)=-S(p+qI),\\
&I'(t)=S(p+qI)-rI,\\
&R'(t)=rI,
\end{aligned}\right.
\end{equation}
where $S$, $I$, and $R$ represent the fraction of susceptible, infected, and recovered, respectively (or equivalentely the probability that any node is susceptible, infected, or recovered respectively). Similarily to~\eqref{S+I+R=1}, we have that
\begin{equation}\label{S+I+R=1_aggregate}
S(t)+I(t)+R(t)\equiv 1,\qquad t\geq 0.
\end{equation}

%It is classical that the system~\eqref{SIR-Bass} can be solved explicitly. Indeed, integrating the equation for $S$, we obtain
%$
%S(t)=S^0 e^{-pt-q\int_0^t I}.
%$
%Injecting this expression into the equation for $I$, integrating the equation on $(0,t)$ and setting $u(t):=\int_0^t I$, we obtain
%$$
%u'(t)=I^0+S^0\left(1-e^{-pt-qu(t)}\right)u(t)-ru(t).
%$$
%\paragraph*{}
If we take $p=0$ in~\eqref{SIR-Bass}, i.e., we assume that there is no source term, then the SIR-Bass model reduces to the classical aggregate $SIR$ model (see e.g.~\cite{Hethcote1989})
\begin{equation}\label{ClassicalSIR}
\left\{\begin{aligned}
&S'(t)=-qSI,\\
&I'(t)=qSI-rI,\\
&R'(t)=rI.
\end{aligned}\right.
\end{equation}
To have a non-trivial dynamics in~\eqref{ClassicalSIR}, we need to assume the presence of patient(s) zero, i.e., $I(0)>0$.

Alternatively, if we take $r=0$ and $R(t=0)=0$ (i.e. assume that there is no recovery), then $R(t)\equiv0$ and so by~\eqref{S+I+R=1_aggregate} we have $I=1-S$. Hence,~\eqref{SIR-Bass} reduces to the original Bass model~\cite{Bass1969}:
\begin{equation}
I'(t)=(1-I)(p+qI).
\end{equation}
If we further assume that $I(0)=0$, this leads to the Bass formula
\begin{equation}\label{Bass_formula}
I_{Bass}(t)=\frac{1-e^{-(p+q)t}}{1+\frac{q}{p}e^{-(p+q)t}}.
\end{equation}
%Therefore, system~\eqref{SIR-Bass} is referred to as the non-spatial SIR-Bass model, see also~\cite{Fibich2016}.\note{This article?}

Finally, assuming no recovery ($r=R(t=0)=0$) and no source term ($p=0$), the SIR-Bass model~\eqref{SIR-Bass} reduces to the classical $SI$ model
\begin{equation}\label{SI}
\left\{\begin{aligned}
&S'(t)=-qSI,\\
&I'(t)=qSI.
\end{aligned}\right.
\end{equation}

\section{Bass model on 1D one-sided lattices}\label{sec:1D1S}

%\subsection{1D one-sided graph without recovery (Bass model)}\label{sec:1D1S_Bass}
%\subsection{Definition}

We begin our analysis with the case of a 1D one-sided lattice, that is, when
\begin{equation}\label{Assumption1D1S}
q_{ik}=0\qquad \text{if }i\neq k-1.
\end{equation}
Therefore, infection of node $k$ at time-step $n$ can be caused either by a source term at the rate $p_k^n$, or by contagion from its left neighbor at the rate $q_k^n$.
%In other words, we assume that the nodes influence each other as if they were placed on a one-dimensional one-sided graph. As mentioned previously, the graph can be either an infinite line, a finite line, a semi-infinite line, or a finite circle.
Hence,~\ref{Assumption_Generale_1_Intro_BIS} reduces to\footnote{In the case of the semi-infinite line $\mathcal{K}=\mathbb{N}$, the transition probability for the boundary node $k=0$ is
\begin{equation}
 \P\left(I_0^{n+1}\big\vert \bm{X}^n \right)=
\left\{\begin{aligned}
&p_0^n\dt ,&&\text{if }x_0^n=s,\\
& 1-r_0^n\dt,&&\text{if }x_0^n=i,\\
& 0 ,&&\text{if }x_0^n=r,
\end{aligned}\right.
\end{equation}
}
\begin{subequations}\label{Assumption_1D1S_WithRecovery_Intro}
\begin{equation}\label{Assumption_1D1S_WithRecovery_1_Intro}
 \tag*{(\ref{Assumption_1D1S_WithRecovery_Intro}a)}
 \P\left(I_k^{n+1}\big\vert \bm{X}^n \right)=
\left\{\begin{aligned}
&\left(p_k^n+q_k^n \mathds{1}_{I_{k-1}^n}\right)\dt,&&\text{if }x_k^n=s,\\
& 1-r_k^n\dt,&&\text{if }x_k^n=i,\\
& 0 ,&&\text{if }x_k^n=r,
\end{aligned}\right.
\end{equation}
and~\ref{Assumption_Generale_2_Intro_BIS} remains unchanged, that is:
\begin{equation}\label{Assumption_1D1S_WithRecovery_2_Intro}
 \tag*{(\ref{Assumption_1D1S_WithRecovery_Intro}b)}
\P\left(R_k^{n+1}\big\vert \bm{X}^n \right)=
\left\{\begin{aligned}
&0,&&\text{if }x_k^n=s,\\
& r_k^n\dt,&&\text{if }x_k^n=i,\\
& 1 ,&&\text{if }x_k^n=r,
\end{aligned}\right.
\end{equation}
\end{subequations}
%for some nonnegative parameters $p_k^n,q_k^n,r_k^n\geq0$.

In this section, we focus on the Bass model case when there is no recovery, i.e., when
\begin{equation}\label{1D1S_Assumption_NoRecovery}
r_k^n\equiv 0\qquad \text{ and }\qquad [R^0_k]\equiv 0,\quad\forall k\in\mathcal{K}.
\end{equation}
This implies that $[R_k^n]\equiv 0$ and so by~\eqref{S+I+R=1},
\begin{equation}\label{S+I=1}
{}[S_k^n]+[I_k^n]\equiv 1,\qquad \forall k,n.
\end{equation}
Therefore,~\eqref{Assumption_1D1S_WithRecovery_1_Intro} and~\eqref{Assumption_1D1S_WithRecovery_2_Intro} reduce to
\begin{equation}\label{Assumption_1D1S_WithoutRecovery}
\P\left(I_k^{n+1}\big\vert \bm{X}^n \right)=
\left\{\begin{aligned}
&\dt\left(p_k^n+q_k^n \mathds{1}_{I_{k-1}^n}\right),&&\text{if }x_k^n=s,\\
& 1,&&\text{if }x_k^n=i.
\end{aligned}\right.
\end{equation}

\subsection{Deterministic description}
In what follows, we show that the probability $[S_k^n](t)$ of node $k$ to be susceptible at time $t^n$ satisfies a deterministic discrete PDE {(here, the term ``discrete'' means that the time and space derivatives are replaced by finite differences)}.
We then let $\dt\to0$ to deduce that the probabilities $[S_k](t)$ satisfy a system of differential equations.
\begin{theorem}\label{th:1D1SWithP}
Assume that the nodes are placed on a 1D one-sided graph, see~\eqref{Assumption1D1S}, that the initial conditions are stochastic and uncorrelated, see~\eqref{hyp:initial_cond_uncor}, and that there is no recovery, see~\eqref{1D1S_Assumption_NoRecovery}, so that the stochastic dynamics are governed by~\eqref{Assumption_1D1S_WithoutRecovery} {where $\mathcal{K}$ is either a finite, semi-infinite or infinite line.}
%\item[$ii)$] The graph is a circle (i.e. $\mathcal{K}=\mathbb{Z}/K\mathbb{Z}$ for some integer $K\geq 2$) and the initial conditions are deterministic (i.e. $[S^0_k]\in\{0,1\}$ for all $k=0,\dots, K-1$).
%\end{itemize}
Then for all $k\in\mathcal{K}$, as $\dt\to0$, $[S_k]$ satisfies the differential equation
\begin{equation}\label{EquationDiscrete1D1SidedWithP_Heterogeneous}
{}[S_k]'(t)+(p_k(t)+q_k(t))[S_k](t) -q_k(t)e^{-\int_0^t p_k(s)ds}[S_k^0][S_{k-1}](t)=0,\qquad [S_k](t=0)=[S_k^0].
\end{equation}
In particular, if $p\equiv p_k^n$ and $q\equiv p_k^n$ are independent of $k$ and $n$, then~\eqref{EquationDiscrete1D1SidedWithP_Heterogeneous} reads
\begin{equation}\label{EquationDiscrete1D1SidedWithP}
{}[S_k]'+(p+q)[S_k] -qe^{-pt}[S_k^0][S_{k-1}]=0 ,\qquad [S_k](t=0)=[S_k^0].
\end{equation}
\end{theorem}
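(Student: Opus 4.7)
The plan is to start from the exact discrete-time master equation for $[S_k^{n+1}]$ and identify the only coupling term that prevents the system from closing on the single-node probabilities $[S_k]$. We then exploit the one-sided structure together with the absence of recovery to rewrite this coupling as a product of $[S_{k-1}](t)$ with a purely single-node quantity depending only on $p_k$ and $[S_k^0]$.

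First, using~\eqref{Assumption_1D1S_WithoutRecovery} together with the elementary fact that $S_k^{n+1}\subset S_k^n$ (in the Bass regime with no recovery, the only reachable transition is $s\to i$, so being susceptible at time $t^{n+1}$ implies having been susceptible at time $t^n$), I would derive
\begin{equation*}
[S_k^{n+1}]=[S_k^n]-\dt\, p_k^n[S_k^n]-\dt\, q_k^n\,\P(S_k^n\cap I_{k-1}^n).
\end{equation*}
Since~\eqref{1D1S_Assumption_NoRecovery} forces $[R_{k-1}^n]\equiv0$, the identity $[S_{k-1}^n]+[I_{k-1}^n]=1$ gives $\P(S_k^n\cap I_{k-1}^n)=[S_k^n]-\P(S_k^n\cap S_{k-1}^n)$, so the whole problem reduces to computing the joint probability $\P(S_k^n\cap S_{k-1}^n)$. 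The key observation is that, because there is no recovery, $S_{k-1}^n=\bigcap_{m=0}^{n} S_{k-1}^m$: node $k-1$ is currently susceptible iff it has been susceptible throughout $[0,t^n]$. Conditional on this event, node $k$ has been fully shielded from contagion from its left, so its trajectory up to $t^n$ is governed solely by its own initial condition and its own external-source clock. Using that $x_k^0$ and the source clock of node $k$ are independent of the randomness determining the trajectories of nodes to its left, this yields
\begin{equation*}
\P(S_k^n\cap S_{k-1}^n)=[S_k^0]\prod_{m=0}^{n-1}\left(1-p_k^m\dt\right)\cdot[S_{k-1}^n].
\end{equation*}
Substituting back, dividing by $\dt$ and letting $\dt\to 0$ then produces~\eqref{EquationDiscrete1D1SidedWithP_Heterogeneous}, since $\prod_{m=0}^{n-1}(1-p_k^m\dt)\to e^{-\int_0^t p_k(s)\,ds}$ under the time-continuous limit assumptions; the initial condition $[S_k](0)=[S_k^0]$ is tautological.

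The step I expect to require the most care is the conditional independence argument. One must argue, in the joint probability space of the Poisson-type process, that the initial state $x_k^0$ together with the source-clock randomness of node $k$ form a sub-$\sigma$-algebra which is independent of everything pertaining to nodes $1,\dots,k-1$, so that conditioning on $S_{k-1}^n$ does not perturb node $k$'s marginal dynamics in the ``shielded'' regime. For the leftmost node (in the finite or semi-infinite case), where there is no left neighbor, the argument collapses to a direct one-node computation with only the source term, recovering~\eqref{EquationDiscrete1D1SidedWithP_Heterogeneous} with the natural convention that the $q_k[S_{k-1}]$ term is absent.
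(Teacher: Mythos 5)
Your proposal is correct and follows essentially the same route as the paper: you derive the same discrete balance equation, reduce everything to the joint probability $\P(S_k^n\cap S_{k-1}^n)$, and close it with the same factorization $\P(S_k^n\cap S_{k-1}^n)=[S_k^0]\prod_{m=0}^{n-1}(1-p_k^m\dt)\,[S_{k-1}^n]$, justified exactly as the paper does by the one-sided structure (no path from $k$ back to $k-1$), the absence of recovery (so $S_{k-1}^n$ means ``never infected''), and the uncorrelated initial conditions. The step you flag as delicate---the independence of node $k$'s initial state and source clock from the left nodes' randomness---is precisely the paper's identity~\eqref{1D1S_Indep_Proof} combined with~\eqref{FromProductToExp}, so no gap remains.
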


\begin{proof}
A straightforward computation gives
\begin{align*}
\P(S_k^n)-\P(S_k^{n+1})
&=\P(I_k^{n+1}\cap S^n_k)=\P(I_k^{n+1}\cap S^n_k\cap S^n_{k-1})+\P(I_k^{n+1}\cap S^n_k\cap I^n_{k-1})\\
&=\underbrace{\P(I_k^{n+1}\vert  S^n_k\cap S^n_{k-1})}_{=p_k^n\dt}\P( S^n_k\cap S^n_{k-1})+\underbrace{\P(I_k^{n+1}\vert  S^n_k\cap I^n_{k-1})}_{=(p_k^n+q_k^n)\dt}\P(S^n_k\cap I^n_{k-1}).
%&= p_k^n\dt \P(S_k^n)+q_k^n\dt \P\(S_k^n\cap I_{k-1}^n\)
\end{align*}
Since $\P(S_k^n\cap I_{k-1}^n)=\P(S_k^n)-\P(S_k^n\cap S_{k-1}^n)$, we obtain that
\begin{equation}\label{Proof_1D1S_noR_1}
\frac{\P(S_k^{n+1})-\P(S_k^n)}{\dt}+ (p_k^n+q_k^n)\P(S_k^n)-q_k^n\P\(S_k^n\cap S_{k-1}^n\)=0.
\end{equation}

In principle, we now have to deal with the term $\P(S_k^n\cap S_{k-1}^n)$ which involves the 2-marginals of the process through the joint laws of $x_{k-1}^n$ and $x_{k}^n$. The 2-marginals, however, can be expressed through the 1-marginal $([S_k])_{k\in\mathcal{K}}$ as follows.
Let us write
\begin{equation}\label{Closure1DSteps}
\P(S_k^n\cap S_{k-1}^n)
= \P(S_k^n\cap S_k^0\cap S_{k-1}^n)
= \P(S_k^n\vert S_k^0\cap S_{k-1}^n)\P(S_k^0\cap S_{k-1}^n)
%=\prod_{i=0}^{n-1} (1-p_k^i\dt) \P(S_k^0)\P(S_{k-1}^n)
\end{equation}
and compute each term of the product separately. {
First, since the initial conditions are uncorrelated, 
%if we assume that the condition $i)$ holds, i.e., that the graph $\mathcal{K}\neq\mathbb{Z}/K\mathbb{Z}$ is not a circle, 
the variables $x_k^0$ and $x_{k-1}^n$ are independent, hence
\begin{equation}\label{1D1S_Indep_Proof}
\P(S_k^0\cap S_{k-1}^n)
=\P(S_k^0)\P(S_{k-1}^n).
\end{equation}
This identity follows from the fact that there is no path from $k$ to $k-1$, therefore the computation of the probability $\P(S_{k-1}^n)$ through~\eqref{Assumption_1D1S_WithoutRecovery} does not involve $x_k^0$.
Second,
if $k$ is not infected at initial time and $k-1$ is not infected by $t^n$, then $k$ can only become infected through the source term $p_k$. In this case, node $k$ remains susceptible during the time interval $(t^n,t^{n+1})$ with a probability $1-p_k^n$, i.e.,
\begin{equation}\label{FromProductToExp}
 \P(S_k^n\vert S_k^0\cap S_{k-1}^n)=\prod\limits_{i=0}^{n-1}\(1-p_k^i\dt\)=e^{-\sum_{i=0}^{n-1}p_k^i\dt}(1+O(\dt))=e^{-\int_{0}^{t}p_k(\cdot)}(1+O(\dt)).
\end{equation}
Note by the way that identities~\eqref{1D1S_Indep_Proof}-\eqref{FromProductToExp} does not hold if the graph is a circle (i.e. $\mathcal{K}=\mathbb{Z}/K\mathbb{Z}$) since, roughly speaking, node $k-1$ can become infected by an epidemic starting from $k$.
}
%Nevertheless, equation~\eqref{1D1S_Indep_Proof} remains true (and actually becomes trivial) if we assume that condition $ii)$ holds, i.e., that the initial conditions are deterministic (even if the graph is a circle).
%
%\begin{center}\textbf{\textit{
%Samuel's comment: you suggested to write the following steps
%}}
%\begin{align*}
%\P(S_k^0\cap S_{k-1}^0)
%=\P(S_k^0\cap S_{k-1}^n\cap S_{k-1}^0)
%&=\P(S_{k-1}^n\vert S_k^0\cap S_{k-1}^0)\P(S_k^0\cap S_{k-1}^0)\\
%&=\P(S_{k-1}^n\vert S_{k-1}^0)\P(S_k^0)\P(S_{k-1}^0)\\
%&=\P(S_{k-1}^n)\P(S_k^0).
%\end{align*}
%\textbf{\textit{
%I agree that all the steps are correct, however, how can we justify rigorously that 
%$\P(S_{k-1}^n\vert S_k^0\cap S_{k-1}^0)=\P(S_{k-1}^n\vert S_{k-1}^0)$ ? I have the feeling that the underlying argument is that $S_{k-1}^n$ and $S_k^0$ are independent. If we admit that, then the "one-step computation" that was written previsouly is better. In other, why is it really useful to condition with respect to $S_{k-1}^0$?
%}}\end{center}
Substituting \eqref{FromProductToExp} and~\eqref{1D1S_Indep_Proof} in~\eqref{Closure1DSteps}, we derive the \emph{closure identity}
\begin{equation}\label{1D1S_P_Closure}
\P(S_k^n\cap S_{k-1}^n)= e^{-\int_0^{t^n} p_k(s)ds}\P(S_k^0)\P(S_{k-1}^n)(1+O(\dt)).
\end{equation}
Finally, plugging~\eqref{1D1S_P_Closure} into~\eqref{Proof_1D1S_noR_1}, one gets
\begin{equation}\label{Proof_1D1S_noR_2}
\frac{\P(S_k^{n+1})-\P(S_k^n)}{\dt}+ (p_k^n+q_k^n)\P(S_k^n)-q_k^ne^{-\int_0^{t^n} p_k(s)ds}\P(S_k^0)\P(S_{k-1}^n)=O(\dt).
\end{equation}
Letting $\dt\to0$ gives the result.
\end{proof}
\begin{remark}
The key element in the proof is the closure identity~\eqref{1D1S_P_Closure}. This identity holds pointwise whenever a node is only influenced by a single node.
Therefore, even if we drop the assumption that the graph is 1D and let it be general, relation~\eqref{EquationDiscrete1D1SidedWithP_Heterogeneous} still holds pointwise at any node which is only influenced by a single node.
\end{remark}
\begin{remark}
The initial conditions $[S_k^0]$ explicitely appear in ODE~\eqref{EquationDiscrete1D1SidedWithP_Heterogeneous} for $[S_k]$, and not just in the initial conditions. As mentioned already, the case of deterministic initial conditions is a special case where $[S_k^0]\in\{0,1\}$.
\end{remark}

\autoref{th:1D1SWithP} provides a spatial-temporal deterministic description of the stochastic dynamics~\eqref{Assumption_1D1S_WithoutRecovery}.
% for $\{x_k^n\}$.
% can be described by a deterministic discrete PDE for the probabilities $[S_k^n]$. We then let $\dt\to0$ to obtain a PDE for the probabilities $[S_k](t)$ which is continuous in time and discrete in space:
Once compute the solution $[S_k](t)$ through equation~\eqref{EquationDiscrete1D1SidedWithP_Heterogeneous}, we can complete the description of the stochastic process, as follows: $[I_k]$ is deduced from~\eqref{S+I=1}, the second-order marginals can be computed using the closure identity~\eqref{1D1S_P_Closure}, and any higher-order marginal can be computed using the same methodology.

We emphasize that equation~\eqref{EquationDiscrete1D1SidedWithP_Heterogeneous} on $[S_k](t)$ is exact and does not result from any approximation (such as the mean-field approximation) or from the assumption that the population is infinite.

%
%\begin{remark}[Correlated initial conditions]
%\autoref{th:1D1SWithP} can be adapted to the case of correlated initial data. Namely, under the same conditions but with possibly correlated initial data, $[S_k]$ satisfies the differential equation
%\begin{equation}\label{EquationDiscrete1D1SidedWithP_Heterogeneous_Cor}
%{}[S_k]'(t)+(p_k(t)+q_k(t))[S_k] -q_k(t)e^{-\int_0^t p_k(s)ds}[S_k^0][S_{k-1}]=0,\qquad [S_k](t=0)=[S_k^0].
%\end{equation}
%The only step that needs to be adapted in the proof of \autoref{th:1D1SWithP} is the derivation of~\eqref{1D1S_Indep_Proof}, which should be replaced in this case by
%\begin{equation}\label{1D1S_Indep_Proof_Cor}
%\P(S_k^0\cap S_{k-1}^n)
%=\P(S_k^0\vert S_{k-1})\P(S_{k-1}^n).
%\end{equation}
%
%\begin{align*}
%\P(S_k^0\cap S_{k-1}^n)
%=\P(S_k^0\cap S_{k-1}^n\cap S_{k-1}^0)
%&=\P(S_{k-1}^n\vert S_k^0\cap S_{k-1}^0)\P(S_k^0\cap S_{k-1}^0)\\
%&=\P(S_{k-1}^n\vert S_{k-1}^0)\P(S_k^0)\P(S_{k-1}^0)\\
%&=\P(S_{k-1}^n)\P(S_k^0).
%\end{align*}
%
%
% and \eqref{1D1S_P_Closure}. 
%\end{remark}

\subsection{Explicit solutions}
In the spatially-homogeneous case, one can obtain explicit solutions of~\eqref{EquationDiscrete1D1SidedWithP_Heterogeneous}:
\begin{proposition}[Spatially-homogeneous Bass solution]\label{rmk:1D1S_NonSpatialSolution}
Assume the conditions of \autoref{th:1D1SWithP}. If the initial condition $[S_k^0]\equiv [S^0]$ and the parameters $p_k(t)\equiv p(t),q_k(t)\equiv q(t)$ do not depend on space $k$ (we allow, however, $p$ and $q$ to depend on time $t$), then $[S_k](t)=[S](t)$ does not depend on $k$, and is given by
\begin{equation}\label{ExplicitU_Hetero_T}
{}[S](t)=[S^0]e^{-\int_0^t \left(p(s)+q(s)\right)ds+[S^0]\int_0^tq(s)e^{-\int_0^s p}ds }.
%=[S^0]e^{-\int_0^t \left(p(s)+q(s)\left(1-[S^0]e^{-\int_0^s p}\right)\right)ds }.
\end{equation}
\end{proposition}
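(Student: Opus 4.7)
The plan is to exploit spatial homogeneity to collapse the infinite ODE system of \autoref{th:1D1SWithP} into a single scalar linear ODE, and then solve it explicitly.

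First I would argue that $[S_k](t)\equiv [S](t)$ does not depend on $k$. Under the homogeneity assumption, the entire ODE system \eqref{EquationDiscrete1D1SidedWithP_Heterogeneous} is invariant under translations $k\mapsto k+1$, and the initial data $[S_k^0]\equiv [S^0]$ is also translation-invariant. By uniqueness for the locally Lipschitz ODE system (for the finite or semi-infinite line, this follows by induction from $k=0$; for the infinite line, one can fix any index and note that the equation for $[S_k]$ with data $[S^0]$ forced by $[S_{k-1}]\equiv [S]$ has the same solution as the equation for $[S]$), we conclude that $[S_k](t)$ is $k$-independent.

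Substituting $[S_k]=[S_{k-1}]=[S]$ and $p_k=p$, $q_k=q$, $[S_k^0]=[S^0]$ into \eqref{EquationDiscrete1D1SidedWithP_Heterogeneous} yields the scalar linear ODE
\begin{equation*}
[S]'(t)+\Bigl(p(t)+q(t)-[S^0]\,q(t)\,e^{-\int_0^t p(s)\,ds}\Bigr)[S](t)=0,\qquad [S](0)=[S^0].
\end{equation*}
This is a homogeneous first-order linear equation, so dividing by $[S](t)$ (which stays positive as long as $[S^0]>0$) gives
\begin{equation*}
\frac{d}{dt}\log[S](t)=-(p(t)+q(t))+[S^0]\,q(t)\,e^{-\int_0^t p(s)\,ds}.
\end{equation*}

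Integrating from $0$ to $t$ and exponentiating then produces the claimed formula \eqref{ExplicitU_Hetero_T}. The main (mild) subtlety is justifying the reduction to a scalar equation on the infinite line $\mathbb{Z}$, where there is no ``boundary'' node from which to induct; this is handled by the translation-invariance and uniqueness argument sketched above. Once the scalar ODE is in hand, the rest is a direct integration with no further difficulty.
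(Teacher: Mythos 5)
Your proof takes essentially the same route as the paper's: invoke translation invariance to conclude $[S_k]\equiv[S]$, substitute into \eqref{EquationDiscrete1D1SidedWithP_Heterogeneous} to get the scalar linear ODE $[S]'+\bigl(p+q(1-[S^0]e^{-\int_0^t p})\bigr)[S]=0$, and integrate; the computation is correct and reproduces \eqref{ExplicitU_Hetero_T}.

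One caveat about your parenthetical justification: the claim that $k$-independence ``follows by induction from $k=0$'' on the finite or semi-infinite line is false. The boundary node has no left neighbor, so (see the footnote to \eqref{Assumption_1D1S_WithRecovery_Intro}) it satisfies $[S_0]'+p[S_0]=0$, whence $[S_0](t)=[S^0]e^{-\int_0^t p}$, which does \emph{not} coincide with \eqref{ExplicitU_Hetero_T}; the subsequent $[S_k]$ then genuinely depend on $k$ and only approach the homogeneous solution as $k\to+\infty$ (compare the discussion after \autoref{th:coro_patient_zero}). So the translation-symmetry reduction, and hence the proposition, really requires the bi-infinite line $\mathcal{K}=\mathbb{Z}$ — your uniqueness argument for that case is the right one, and it is the only case where the statement holds as written (the paper's own one-line appeal to ``translation symmetry'' carries the same implicit restriction). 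Dividing by $[S]$ is harmless: if $[S^0]=0$ the formula trivially gives $[S]\equiv0$, and if $[S^0]>0$ the linear ODE keeps $[S]$ positive.
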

\begin{proof}
By translation symmetry, $[S_k](t)$ does not depend on $k$. Hence, equation~\eqref{EquationDiscrete1D1SidedWithP_Heterogeneous} reads
\begin{equation*}
{}[S]'+\left(p(t)+q(t)\left(1-[S^0]e^{-\int_0^t p(s)ds}\right)\right)[S] =0,\qquad [S](t=0)=[S^0].
\end{equation*}
Expression~\eqref{ExplicitU_Hetero_T} follows from a straightforward integration of the above equation.
\end{proof}

If, in addition to the assumptions of~\autoref{rmk:1D1S_NonSpatialSolution} , $p$ and $q$ do not depend on $t$, then
\begin{equation}\label{ExplicitU}
{}[S](t)=
\left\{\begin{aligned}
&[S^0]e^{-(p+q)t+q[S^0]\frac{1-e^{-pt}}{p}}, &&\quad\text{if }p>0,\\
&[S^0]e^{-q(1-[S^0])t}, &&\quad\text{if }p=0.
\end{aligned}\right.
\end{equation}
This expression was already obtained in~\cite{Fibich2010a} for $p>0$ and $[S^0]\equiv1$.

%
%\begin{remark}\note{new remark. Do we keep it ?}
%Under the spatial homogeneity assumptions of~\autoref{rmk:1D1S_NonSpatialSolution}, the probability $[S](t)$ represents both the probability of any node to be susceptible at time $t$, and the fraction of susceptible individuals in the whole population at time $t$. Solution~\eqref{ExplicitU} with $[S^0]=1$ can then be compared to the fraction of susceptibles in the non-spatial Bass problem given by~\eqref{Bass_formula}, which reads
%\begin{equation}
%[S_{Bass}](t)=1-[I_bass](t)=1-\frac{1-e^{-(p+q)t}}{1+\frac{q}{p}e^{-(p+q)t}}=\frac{q+p}{q+pe^{(p+q)t}}.
%\end{equation}
%The ratio
%\begin{equation}
%f(t):=\frac{[S_{1D}]}{[S_{Bass}]}(t)=\frac{p+qe^{-(p+q)t}}{p+q}e^{q\frac{1-e^{-pt}}{p}}
%\end{equation}
%represents how faster an epidemic spreads in the non-spatial case compared to the $1D$ case. We have $f(0)=1=[S^0]$ and $f(t)\sim \frac{e^{\frac{q}{p}}}{1+\frac{q}{p}}\geq 1$ as $t\to+\infty$.
%\end{remark}

\paragraph*{}
Another situation where an explicit solution can be calculated is that of a semi-infinite line when the boundary node is infected at $t=0$ and all other nodes are initially susceptible:

\begin{proposition}[Patient-zero Bass problem]\label{th:coro_patient_zero}
Assume the conditions of \autoref{th:1D1SWithP}, let $p_k(t)\equiv p$ and $q_k(t)\equiv q$ be independent of $k$ and $t$, and let the nodes be placed on the semi-inifinite line $\mathcal{K}=\{0,1,2,\dots\}$, such that
\begin{equation*}
x_0^0=i,\qquad x_k^0=s,\quad k=1,2,\dots
 \end{equation*} 
Then the solution $[S_k](t)$ of~\eqref{EquationDiscrete1D1SidedWithP} is given by
\begin{equation}\label{Explicit_withP_Uk(t)}
{}[S_0](t)\equiv0,\qquad [S_k](t)=e^{-(p+q)t}\sum\limits_{l=0}^{k-1} \frac{\left(q \frac{1-e^{-pt}}{p}\right)^l}{l!},\qquad k=1,2,\dots
\end{equation}
In addition, if we denote by $N_K(t)=\sum_{k=1}^K \mathds{1}_{x_k(t)=i}$ the total number of infected nodes among nodes $\{1,\dots,K\}$, then
\begin{equation}\label{Explicit_Esperance_1D1S}
\mathbb{E}[N_K^n]=K-e^{-(p+q)t} \sum\limits_{l=1}^{K} l\frac{\left(q \frac{1-e^{-pt}}{p}\right)^{K-l}}{(K-l)!}.
\end{equation}
\end{proposition}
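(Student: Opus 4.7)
The proof will be an application of \autoref{th:1D1SWithP} combined with a clever change of variables that turns the ODE system into an iterated-integration recursion.

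The first claim $[S_0](t)\equiv 0$ is immediate: since $x_0^0=i$ (so $[S_0^0]=0$) and there is no recovery, node $0$ stays infected for all time. For the other nodes, $[S_k^0]=1$ and the patient-zero data feed into equation~\eqref{EquationDiscrete1D1SidedWithP} so that for every $k\geq 1$
\begin{equation*}
[S_k]'(t)+(p+q)[S_k](t)=q\,e^{-pt}\,[S_{k-1}](t),\qquad [S_k](0)=1,
\end{equation*}
together with $[S_0]\equiv 0$. This is a triangular linear system which can be solved iteratively.

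The plan is to remove the exponential by setting $u_k(t):=e^{(p+q)t}[S_k](t)$, which satisfies
\begin{equation*}
u_k'(t)=q\,e^{-pt}\,u_{k-1}(t),\qquad u_k(0)=1,\qquad u_0\equiv 0,
\end{equation*}
and then to perform the time change $\tau:=q\,\frac{1-e^{-pt}}{p}$, so that $\mathrm{d}\tau=q\,e^{-pt}\,\mathrm{d}t$. Writing $v_k(\tau):=u_k(t(\tau))$, the chain rule collapses the system to the elementary recurrence
\begin{equation*}
\frac{\mathrm{d}v_k}{\mathrm{d}\tau}=v_{k-1}(\tau),\qquad v_k(0)=1\ (k\geq 1),\qquad v_0\equiv 0.
\end{equation*}
A short induction, initialised by $v_1\equiv 1$, gives $v_k(\tau)=\sum_{l=0}^{k-1}\tau^l/l!$; undoing the two substitutions yields~\eqref{Explicit_withP_Uk(t)}. (When $p=0$ one should interpret $\tau$ as its limit $qt$, or re-derive the recurrence directly; this is essentially a routine check.)

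For~\eqref{Explicit_Esperance_1D1S}, the identity $\mathbb{E}[N_K(t)]=\sum_{k=1}^K[I_k](t)=K-\sum_{k=1}^K[S_k](t)$ follows from~\eqref{S+I=1}. Plugging in~\eqref{Explicit_withP_Uk(t)} and swapping the order of summation,
\begin{equation*}
\sum_{k=1}^{K}\sum_{l=0}^{k-1}\frac{\tau^l}{l!}=\sum_{l=0}^{K-1}(K-l)\frac{\tau^l}{l!},
\end{equation*}
and reindexing $l\mapsto K-l$ converts this to the claimed form. I do not anticipate a real obstacle: the main step is spotting the substitution $\tau=q(1-e^{-pt})/p$ that linearises the cascade, after which everything reduces to bookkeeping.
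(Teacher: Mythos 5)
Your proof is correct and takes essentially the same route as the paper: both rest on the triangular ODE system \eqref{EquationDiscrete1D1SidedWithP} supplied by \autoref{th:1D1SWithP}, and your computation of $\mathbb{E}[N_K]$ (swap the two sums, then reindex $l\mapsto K-l$) is exactly the calculation in the paper's Appendix~\ref{app:proof_line}. The only difference is cosmetic: the paper establishes \eqref{Explicit_withP_Uk(t)} by direct substitution into the ODE, whereas you derive it constructively via the substitutions $u_k=e^{(p+q)t}[S_k]$ and $\tau=q\frac{1-e^{-pt}}{p}$, which reduces the cascade to $\frac{\mathrm{d}v_k}{\mathrm{d}\tau}=v_{k-1}$ and has the small advantage of explaining where the truncated-exponential form comes from.
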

\begin{proof}
Expression~\eqref{Explicit_withP_Uk(t)} can be verified by direct substitution in~\eqref{EquationDiscrete1D1SidedWithP}. Expression~\eqref{Explicit_Esperance_1D1S}, follows from $\mathbb{E}[N_K^n]= \sum_{k=1}^{K}[I_k](t)$, \eqref{S+I=1}, \eqref{Explicit_withP_Uk(t)} and some elementary computations detailed in Appendix~\ref{app:proof_line}
\end{proof}

Let us briefly justify the importance of the patient-zero Bass problem considered in~\autoref{th:coro_patient_zero}.
When the source term $p$ is much smaller than the contagion term $q$, the dynamics of the population can be formally described in two time scales: propagation of clusters through contagion in a fast time-scale, and spontaneous creation of new seeds in a slow time-scale~\cite{Fibich2010a}.
The patient-zero Bass problem thus models the fast time-scale evolution of clusters.

Passing to the limit in~\eqref{Explicit_withP_Uk(t)}, we have that
\begin{equation}\label{Explicit_withP_Uk(t)_limit}
\lim_{k\to+\infty} [S_k](t)=e^{-(p+q)t+q\frac{1-e^{-pt}}{p}},\qquad k=1,2,\dots
\end{equation}
This limit coincides with the expression for $[S_k]$ in the homogeneous case with $[S_k^0]\equiv1$, see~\eqref{ExplicitU}, since the effect of patient zero vanishes as $k\to+\infty$.
Note also that, as $K\to+\infty$, the expected fraction of infected nodes is given by, see~\eqref{Explicit_Esperance_1D1S},
\begin{equation}\label{ExpectedFraction}
\lim\limits_{K\to+\infty}\frac{\mathbb{E}[N_K^n]}{K}= 1-e^{-(p+q)t+q \frac{1-e^{-pt}}{p}}%= \lim_{k\to+\infty} [I_k](t),
\end{equation}
which expresses again the fact that the solution of the patient zero problem converges to the homogeneous solution given by~\eqref{ExplicitU} as $K\to+\infty$.

\paragraph*{}
In~\autoref{th:coro_patient_zero}, the epidemic is triggered by two distinct mechanisms: firstly, by contagion from ``patient-zero'' located at $k=0$, secondly, by spontaneous infection from the source term $p$.
Expression~\eqref{Explicit_withP_Uk(t)} indicates how these two effects combine nonlinearly.
To focus on the effect of the source term, we may remove ``patient zero'' and assume that all the individuals on the half-line are initially susceptible. In this case, it can be shown that the probability $[S_k](t)$ coincides with the probability of a node to be susceptible on a finite circle of length $k$. Therefore, it can be computed explicitly, see~\cite{Fibich2019} for more details.
%and it can be computed explicitely using the indifference principle~\cite[Lemma 4.1]{Fibich2019}.
If, on the other hand, we only focus on the effect of contagion from ``patient-zero'', we may discard the source term by letting $p\to0$ in~\eqref{Explicit_withP_Uk(t)}. This gives the solution of the \emph{patient-zero $SI$ problem} 
\begin{equation}\label{LabelQuelconque}
{}[S_k](t)= e^{-qt}\sum_{l=0}^{k-1}\frac{\left(q t\right)^{l-1}}{(l-1)!},\qquad k=1,2,\dots
\end{equation}
By~\eqref{LabelQuelconque}, for any $k\geq1$ and $t>0$, we have on the one hand that $S_k(t)>0$, implying that the epidemics propagates instantaneously from ``patient zero'' to any other node. On the other hand, $\lim_{k\to+\infty} S_k(t)=1$ for any $t>0$ implying that the propagation occurs at a finite \emph{asymptotic speed}. In fact, we can show that the asymptotic speed of propagation is $q$:
\begin{proposition}[Speed of propagation]\label{th:SpeedPropagation1D1S}
Assume the conditions of~\autoref{th:coro_patient_zero} and assume furthermore that $p= 0$ ($SI$ patient-zero problem). Denote by
$\overline k(t)=\sup\{k\geq0: x_k(t)= i\}$ the location of the propagation front of the epidemic. Then, for all $\alpha\in\R$, 
\begin{equation*}
 \lim\limits_{t\to+\infty}\P\(\overline{k}(t)\leq qt+ \alpha\sqrt{t}\)=\frac{1}{\sqrt{2\pi q}}\int_{-\infty}^\alpha e^{-\frac{\tau ^2}{2q}}d\tau.
 \end{equation*} 
\end{proposition}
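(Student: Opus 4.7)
The plan is to identify the distribution of $\bar k(t)$ explicitly, then apply a standard central limit theorem.

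First, I would exploit the one-directional structure of the contagion. Because the infection can only propagate from $k-1$ to $k$ and there is no source term ($p=0$), node $K+1$ is susceptible at time $t$ if and only if the infection wave has not yet reached position $K+1$; in particular, $\{\bar k(t) \leq K\} = S_{K+1}(t)$. Using the explicit expression~\eqref{LabelQuelconque} (which, at $p=0$, reads $[S_k](t)=e^{-qt}\sum_{l=0}^{k-1}(qt)^l/l!$), this yields
\begin{equation*}
\P\bigl(\bar k(t) \leq K\bigr) = e^{-qt}\sum_{l=0}^{K}\frac{(qt)^l}{l!},
\end{equation*}
which is precisely the cumulative distribution function of a Poisson random variable with parameter $qt$.

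A parallel probabilistic interpretation, which I would include to make the picture transparent, is the following: let $T_k$ be the first time at which node $k$ becomes infected. Then $T_0=0$, and once node $k-1$ is infected, node $k$ (which is still susceptible since it can only be infected by its left neighbor) becomes infected after an exponential waiting time of rate $q$. By the strong Markov property, the increments $T_k - T_{k-1}$ are i.i.d.\ $\mathrm{Exp}(q)$, so $(T_k)_{k\geq 0}$ are the arrival times of a homogeneous Poisson process with rate $q$, and $\bar k(t)$ is its counting process at time $t$; hence $\bar k(t)\sim \mathrm{Poisson}(qt)$, recovering the computation above.

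Finally, the central limit theorem for Poisson random variables gives $(\bar k(t) - qt)/\sqrt{qt} \Rightarrow \mathcal{N}(0,1)$ as $t \to +\infty$. Writing $\P(\bar k(t)\leq qt+\alpha\sqrt{t}) = \P\bigl((\bar k(t)-qt)/\sqrt{qt}\leq \alpha/\sqrt{q}\bigr)$ and performing the change of variable $\tau = \sqrt{q}\, s$ in the Gaussian density yields exactly the expression in the statement. There is no substantive obstacle: the one-directional structure reduces the front to a textbook Poisson counting process, so the claim reduces to the standard Poisson CLT; the only minor care required is the boundary case $k=0$ (consistent with $\bar k(t)\geq 0$ since node $0$ is infected at $t=0$), which does not affect the asymptotics.
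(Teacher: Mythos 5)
Your proof is correct and follows essentially the same route as the paper: identify $\overline{k}(t)$ as the counting process of a rate-$q$ Poisson process (equivalently, a $\mathrm{Poisson}(qt)$ variable) and conclude by the central limit theorem. Your version is in fact more carefully written than the paper's one-line argument, since you verify the Poisson law both via the explicit formula~\eqref{LabelQuelconque} and via the i.i.d.\ exponential waiting times, and you carry out the normalization $(\overline{k}(t)-qt)/\sqrt{qt}$ and the change of variables explicitly.
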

\begin{proof}
%Let us consider the discrete time process $x^n$ and accordingly $\overline{k}^n=\sup\{k\geq0: x_k^n= i\}$.
The process $\overline{k}(t)$ is equivalent to a standard Poisson process with parameter $q$. It is then classical that the mean value of $\overline{k}(t)$ is $t$. Then, by the central limit theorem, we have that $\frac{\overline{k}(t)-t}{\sqrt{t}}$ converges in probability when $t\to+\infty$ to a Gaussian with mean $0$ and variance $q\sqrt{t}$, hence the result.
\end{proof}
Thus the epidemics propagate through space at an average speed $q$, such that the exact location of the front has a variance $q\sqrt{t}$ around its mean value.
Therefore, we can make a formal analogy between the propagation of the epidemics and the propagation of particles whose density $u(t,x)$ is given by the advection-diffusion equation~\cite{Socolofsky2004}
\begin{equation*}
\D_t u+q\D_x u-\frac{q}{2}\D^2_{xx} u=0.
\end{equation*}
This equation describes the evolution of a density of particles subject to advection towards $x>0$ at speed $q$ (resulting in propagation at average speed $q$) and to diffusion (i.e., each particle follows a Brownian motion) with diffusivity $\frac{q}{2}$ (resulting in a variance $q$ for the location of the propagation front).

\paragraph*{}

Finally, we compute an explicit solution for a time-varying source term located at the boundary of a semi-infinite line:
\begin{proposition}[Time-varying point source]\label{th:SourceTerm}
Assume that the individuals are placed on a semi infinite line $\mathcal{K}=\{0,1,2,\dots\}$ and that all individuals are initially susceptible, i.e., $x_k^0=s$ for $k\in\mathcal{K}$. Assume 
that $q_k(t)\equiv q>0$ is constant, that $p_k(t)\equiv0$ for all $k\geq1$, and allow a single time-varying source term $p_0(t)\geq0$ at $k=0$. Then
\begin{equation}\label{SourceTermExplicit}
{}[S_0](t)=e^{-\int_0^tp_0(\tau)d\tau},\qquad [S_k](t)=1-\int_0^t\frac{(q\tau)^{k-1}}{(k-1)!}qe^{-q\tau}\left(1-e^{-\int_0^{t-\tau}p_0(\tau')d\tau'}\right)d\tau,\qquad  k=1,2\dots
\end{equation}
In particular,
\begin{equation}\label{SourceTermExplicit_limit}
\lim\limits_{t\to+\infty}[S_k](t)=e^{-\int_0^{+\infty}p_0(\tau)d\tau},\qquad k=0,1,\dots
\end{equation}
\end{proposition}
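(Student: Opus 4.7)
The plan is to invoke \autoref{th:1D1SWithP} to reduce the problem to an explicitly solvable linear ODE system, and then to solve this system recursively in $k$. Under the stated assumptions we have $[S_k^0]=1$ for all $k\geq 0$, $p_k(t)\equiv 0$ for $k\geq 1$, and $q_k(t)\equiv q$. For the boundary node $k=0$, rerunning the proof of \autoref{th:1D1SWithP} with the boundary transition probability from the footnote (which has no contagion term) gives simply
\begin{equation*}
[S_0]'(t)=-p_0(t)[S_0](t),\qquad [S_0](0)=1,
\end{equation*}
whose solution is the first identity in~\eqref{SourceTermExplicit}. For $k\geq 1$, equation~\eqref{EquationDiscrete1D1SidedWithP_Heterogeneous} reduces to the cascade
\begin{equation*}
[S_k]'(t)+q[S_k](t)-q[S_{k-1}](t)=0,\qquad [S_k](0)=1.
\end{equation*}

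Solving this cascade by variation of constants yields the iteration
\begin{equation*}
[S_k](t)=e^{-qt}+\int_0^t q\,e^{-q(t-s)}[S_{k-1}](s)\,ds.
\end{equation*}
I would then prove the claimed formula~\eqref{SourceTermExplicit} by induction on $k$. The base case $k=1$ follows by a direct computation: performing the change of variables $u=t-s$ and splitting $[S_0](t-u)=1-(1-[S_0](t-u))$, together with $\int_0^t q\,e^{-qu}\,du=1-e^{-qt}$, matches~\eqref{SourceTermExplicit} exactly. For the induction step, I would substitute the assumed formula for $[S_{k-1}]$ into the iteration and apply Fubini to reduce the double integral to a single one, the kernel of which is the convolution of two Erlang kernels. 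The identity
\begin{equation*}
\int_0^\tau \frac{(qs)^{k-2}}{(k-2)!}\,q\,e^{-qs}\cdot q\,e^{-q(\tau-s)}\,ds=\frac{(q\tau)^{k-1}}{(k-1)!}\,q\,e^{-q\tau}
\end{equation*}
then closes the induction. Probabilistically this is the well-known fact that the sum of $k$ i.i.d.\ $\mathrm{Exp}(q)$ variables has an $\mathrm{Erlang}(k,q)$ distribution; node $k$ becomes infected once the boundary has been triggered by the source and the infection has then propagated through $k$ successive exponential delays along the chain $0\to 1\to\cdots\to k$.

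The limit~\eqref{SourceTermExplicit_limit} then follows immediately: since the Erlang density integrates to $1$, letting $t\to+\infty$ in~\eqref{SourceTermExplicit} yields $\lim_{t\to\infty}[S_k](t)=1-(1-e^{-\int_0^{+\infty}p_0})=e^{-\int_0^{+\infty}p_0}$ for $k\geq 1$, and the $k=0$ case is trivial. The main technical obstacle is the bookkeeping in the inductive convolution step; a streamlined alternative that avoids it entirely is to take the Laplace transform of the cascade, under which the recurrence becomes multiplication by $q/(\sigma+q)$, so that the Laplace transform of $1-[S_k]$ equals $\bigl(q/(\sigma+q)\bigr)^k$ times that of $1-[S_0]$, and inverting yields the claimed convolution in one step.
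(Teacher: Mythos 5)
Your proposal is correct and follows essentially the same route as the paper: the paper also reduces to the cascade $[S_k]'+q([S_k]-[S_{k-1}])=0$, rewrites it (via $[I_k]=1-[S_k]$) as convolution with the kernel $qe^{-q\cdot}$, identifies the $k$-fold convolution as the Erlang density $\frac{(qt)^{k-1}}{(k-1)!}qe^{-qt}$ by induction, and obtains the limit from the normalization of that density (the paper invokes dominated convergence explicitly at this last step, which you should too). Your direct induction on $[S_k]$ versus the paper's single $k$-fold convolution of $[I_0]$ is only a cosmetic reorganization of the same computation.
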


\begin{proof}
Under the above assumptions, equation~\eqref{EquationDiscrete1D1SidedWithP_Heterogeneous} reduces to
\begin{equation}\label{aaa}
{}[S_0]'(t)+p_0(t) [S_0](t)=0,\qquad S_0(0)=1,
\end{equation}
and
\begin{equation}\label{bbb}
{}[S_k]'(t)+q\left([S_k](t)-[S_{k-1}](t)\right)=0,\qquad S_k(0)=1,\qquad  k=1,2\dots
\end{equation}
The expression of $[S_0](t)$ in~\eqref{SourceTermExplicit} is obtain by a straightforward integration of~\eqref{aaa}.
Using~\eqref{S+I=1}, equation~\eqref{bbb} yields
$$
{}[I_k]'(t)+q\left([I_k](t)-[I_{k-1}](t)\right)=0,\qquad I_k(0)=0,\qquad  k=1,2\dots
$$
Hence,
\begin{equation}\label{Recursion1}
{}[I_k](t)=q e^{-q\cdot}\star_t [I_{k-1}](\cdot),\qquad \text{ }e^{-q\cdot}\star_t [I_{k-1}](\cdot):= \int_0^t e^{-q(t-\tau)} [I_{k-1}](\tau)d\tau.
\end{equation}
%Note also that the constant $1$ is a fixed point of the recursion relationship~\eqref{Recursion1}, namely,
%\begin{equation*}
%1= e^{-qt}+q e^{-q\cdot}\star_t 1.
%\end{equation*}
%Therefore, for all $k>0$,
%\begin{equation*}
%{}[S_{k}](t)-1 = q e^{-q\cdot}\star_t\left([S_{k-1}](\cdot)-1\right).
%\end{equation*}
By induction, we infer that
\begin{equation}\label{LocalizedSource_IntermediateExpression}
{}[I_{k}](t)= q^k\underbrace{e^{-q\cdot}\star_t\cdots\star_t e^{-q\cdot}}_{k\text{ terms}}\star_t[I_{0}](\cdot),
\end{equation}
where we have used the associativity of the convolution product.
Now,
\begin{equation}\label{LocalizedSource_Step1}
{}[I_0](t)=1-[S_0](t)=1-e^{\int_0^t p_0(\tau)d\tau}.
\end{equation}
In addition, a straightforward induction shows that 
\begin{equation}\label{LocalizedSource_Step2}
q^k\underbrace{e^{-q\cdot}\star_t\cdots\star_t e^{-q\cdot}}_{k\text{ terms}}= \frac{(qt)^{k-1}}{(k-1)!}qe^{-qt}.
\end{equation}
Plugging~\eqref{LocalizedSource_Step1} and~\eqref{LocalizedSource_Step2} into~\eqref{LocalizedSource_IntermediateExpression} gives~\eqref{SourceTermExplicit}.

Let us now prove~\eqref{SourceTermExplicit_limit}.
For a fixed $k\in\{0,1,\dots\}$, by the dominated convergence theorem, we have
$$
\lim\limits_{t\to+\infty}\int_0^t\frac{(q\tau)^{k-1}}{(k-1)!}qe^{-q\tau}\left(1-e^{-\int_0^{t-\tau}p_0}\right)d\tau
=
\left(1-e^{-\int_0^{+\infty}p_0}\right)\int_0^{+\infty}\frac{(q\tau)^{k-1}}{(k-1)!}qe^{-q\tau}d\tau.
$$
Since
$$\int_0^{+\infty}\frac{(q\tau)^{k-1}}{(k-1)!}qe^{-q\tau}d\tau=1,$$
this concludes the proof of~\eqref{SourceTermExplicit_limit}.
\end{proof}
The limit~\eqref{SourceTermExplicit_limit} does not depend on $k$ and, in particular, coincides with the limit as $t\to+\infty$ of $[S_0](t)=e^{-\int_0^{t}p_0}$. This limit can be obtained formally by passing to the limit in~\eqref{EquationDiscrete1D1SidedWithP_Heterogeneous}.
Note also that, for a fixed $t\geq0$, as $k\to+\infty$ in~\eqref{SourceTermExplicit}, we get that $[S_k](t)\to 1$. This expresses the fact that, with probability $1$,
the source term located at $k=0$ does not influence far-away nodes in finite time, since the epidemics propagates at a finite speed (see \autoref{th:SpeedPropagation1D1S}).

{
Note that by formally putting $p_0(t):=K\delta_{t=0}$ in~\eqref{SourceTermExplicit},  where $K>0$ and $\delta_{t=0}$ denotes the Dirac mass centered at $0$, then we recover the formula~\eqref{Explicit_withP_Uk(t)} with $p=0$ for the patient-zero problem as $K\to+\infty$.
}
%$$
%\left(\tau\mapsto \frac{(q\tau)^{k-1}}{(k-1)!}qe^{-q\tau}\right)\star_t 1= 1-e^{-qt}\sum_{l=0}^{k-1}\frac{(qt)^l}{l!},
%$$
%
%\begin{align*}
%\int_0^t\frac{\left(q\tau\right)^k}{k!}e^{-q \tau-\int_0^{t-\tau} p_0}d\tau
%&=
%e^{-\int_0^{+\infty}p_0}\int_0^{+\infty}\frac{\left(q\tau\right)^k}{k!}e^{-q \tau}d\tau +o(1)
%\\&=
%e^{-\int_0^{+\infty}p_0}\frac{1}{q}\left(1-e^{-qt}\sum_{l=0}^k\frac{(qt)^l}{l!}\right)+o(1),
%\\&=
%\frac{1}{q}e^{-\int_0^{+\infty}p_0}+o(1).
%\end{align*}

%Then, classical computations give
%\begin{align*}
%\left(\tau\mapsto \frac{(q\tau)^{k-1}}{(k-1)!}qe^{-q\tau}\right)\star_t 1= 1-e^{-qt}\sum_{l=0}^{k-1}\frac{(qt)^l}{l!},
%\end{align*}
%\begin{align*}
%\left(\tau\mapsto \frac{(q\tau )^k}{k!}e^{-q\tau}\right)\star_t e^{-q\cdot}= \frac{1}{q}e^{-qt}\frac{(qt)^{k+1}}{(k+1)!},
%\end{align*}
%and
%$$
%\left(\tau\mapsto \frac{(q\tau)^{k-1}}{(k-1)!}qe^{-q\tau}\right)\star_t\left(\tau\mapsto e^{-\int_0^\tau p_0}\right)
%=
%q\int_0^t\frac{\left(q\tau\right)^{k-1}}{(k-1)!}e^{-q \tau-\int_0^{t-\tau} p_0}d\tau,
%$$
%which leads to~\eqref{SourceTermExplicit}.

\section{SIR-Bass model on 1D one-sided lattices}\label{sec:1D1S_SIR}
We now consider the more general 1D one-sided case, when recovery is not neglected, see~\eqref{Assumption_1D1S_WithRecovery_Intro}.

\subsection{Deterministic desciption}
We begin by deriving from the discrete stochastic model~\eqref{Assumption_1D1S_WithRecovery_Intro} a system of differential equations for $[S_k](t)$:
\begin{theorem}\label{th:1D1SWithP_R}
Assume that the nodes are placed on a 1D one-sided lattice, see~\eqref{Assumption1D1S}, that the stochastic dynamics are governed by~\eqref{Assumption_1D1S_WithRecovery_Intro}, and that the initial conditions are stochastic and uncorrelated, see~\eqref{hyp:initial_cond_uncor}.
{
%$r_k^n>0$\note{without this assumption the proof gets more intricate (because setting $w_k^n$ is not licite)} for all $n\geq0$ and $k\in\mathcal{K}$, 
For any $k\in\mathcal{K}$, if $q_k(t)>0$ for all $t\geq0$, then $[S_k](t)$ satisfies 
\begin{subequations}
\begin{equation}\label{EquationDiscrete1D1SidedWithP_R_Dt}
\begin{aligned}
{}
&[S_k]'+\big(p_k+q_k+r_k\big)[S_k]= q_ke^{-\int_0^tp_k}[S_k^0]\left([S_{k-1}]+ [R_{k-1}^0]+\frac{r_k(0)}{q_k(0)}+\int_0^t \left(\frac{r_k}{q_k}\right)'(s)\frac{[S_k](s)}{[S_k^0]}ds\right),
\\
& [S_k](0)=[S_k^0],
\end{aligned}
\end{equation}
%,\qquad [S_k](0)=[S_k^0],
%\begin{equation*}
%{}[I_k]'-(p+q+r)[S_k] + e^{-pt}[S_k^0]\big(q[S_{k-1}]+r\big) + qe^{-pt}\left[S_k^0\cap R_{k-1}^0\right]+  r \P\(I_k^n\)=0,
%\end{equation*}
and $[R_k]$ satisfies
\begin{equation}\label{EquationDiscrete1D1SidedWithP_R_Dt_R}
{}[R_k]'(t)-r_k\big(1-[S_k]-[R_k]\big)=0,\qquad [R_k](0)=[R_k^0].
\end{equation}
\end{subequations}
}
\end{theorem}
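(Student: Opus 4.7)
The plan is to follow the structure of the proof of \autoref{th:1D1SWithP}, with the extra complication that the neighbor $k-1$ may now be in the recovered state. Starting from $\P(S_k^n)-\P(S_k^{n+1})=\P(S_k^n\cap I_k^{n+1})$, I would decompose over $x_{k-1}^n\in\{s,i,r\}$, noting that the $s\to i$ rate at node $k$ equals $p_k^n$ when $x_{k-1}^n\in\{s,r\}$ and $p_k^n+q_k^n$ when $x_{k-1}^n=i$. Using $\P(S_k^n\cap I_{k-1}^n)=[S_k^n]-\P(S_k^n\cap S_{k-1}^n)-\P(S_k^n\cap R_{k-1}^n)$, this yields the discrete identity
\begin{equation*}
\frac{[S_k^{n+1}]-[S_k^n]}{\dt}+(p_k^n+q_k^n)[S_k^n]-q_k^n\bigl(\P(S_k^n\cap S_{k-1}^n)+\P(S_k^n\cap R_{k-1}^n)\bigr)=O(\dt).
\end{equation*}
The term $\P(S_k^n\cap S_{k-1}^n)$ closes exactly as in the proof of \autoref{th:1D1SWithP}, since that argument relies only on the one-sided structure (giving $x_k^0\independent x_{k-1}^n$) and on the fact that under $S_k^0\cap S_{k-1}^n$ node $k$ can only be infected through its source term. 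Hence $\P(S_k^n\cap S_{k-1}^n)=e^{-\int_0^{t^n}p_k}[S_k^0][S_{k-1}^n](1+O(\dt))$.

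The genuinely new object is $\P(S_k\cap R_{k-1})$, which admits no static product-form closure and must be captured dynamically. I would use the exponential survival representation
\begin{equation*}
[S_k](t)=[S_k^0]\,e^{-\int_0^t p_k}\,\phi(t),\qquad \phi(t):=\mathbb{E}\!\left[e^{-\int_0^t q_k(s)\mathds{1}_{I_{k-1}(s)}\,ds}\right],
\end{equation*}
valid by conditional independence of $x_k^0$ and the trajectory of $x_{k-1}$, and split $\phi=\alpha+\beta+\gamma$ according to whether $x_{k-1}(t)$ equals $s$, $i$ or $r$. One checks that $\alpha(t)=[S_{k-1}](t)$ (since the exponent vanishes on $S_{k-1}(t)$) and that $\P(S_k\cap R_{k-1})(t)=[S_k^0]e^{-\int_0^t p_k}\gamma(t)$. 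Differentiating $\phi$ under the expectation gives $\phi'=-q_k\beta$, which expresses $\beta$ in closed form in terms of $[S_k]$ and $[S_k]'$. A short Markov-property computation on the $I\to R$ transition at node $k-1$ then produces the ODE $\gamma'(t)=r_{k-1}(t)\beta(t)$ with $\gamma(0)=[R_{k-1}^0]$. Integrating and performing an integration by parts to shift the derivative from $\beta$ (equivalently from $\phi'$) onto $r_{k-1}/q_k$ generates precisely the boundary contributions $[R_{k-1}^0]+\frac{r_{k-1}(0)}{q_k(0)}$ together with the memory integral over $[S_k](s)$ featured in~\eqref{EquationDiscrete1D1SidedWithP_R_Dt}. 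Substituting back into the discrete equation and letting $\dt\to0$ yields~\eqref{EquationDiscrete1D1SidedWithP_R_Dt}.

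Equation~\eqref{EquationDiscrete1D1SidedWithP_R_Dt_R} for $[R_k]$ is much more direct: the only way $k$ becomes newly recovered during $(t^n,t^{n+1})$ is the $i\to r$ transition at node $k$, which occurs with probability $r_k^n\dt$ conditional on $I_k^n$. Hence $[R_k^{n+1}]-[R_k^n]=r_k^n\dt\,[I_k^n]+O(\dt^2)$, and substituting $[I_k]=1-[S_k]-[R_k]$ in the continuous limit gives the stated ODE. The main obstacle is thus the dynamical handling of $\P(S_k\cap R_{k-1})$ via the auxiliary quantity $\gamma$: the hypothesis $q_k(t)>0$ enters precisely here, both for recovering $\beta$ from $\phi'=-q_k\beta$ by division, and for keeping $r_{k-1}/q_k$ regular in the integration by parts that converts the transient weight into the explicit memory term in~\eqref{EquationDiscrete1D1SidedWithP_R_Dt}.
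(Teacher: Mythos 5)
Your argument is correct, and its first and last steps (the decomposition of $\P(S_k^n\cap I_k^{n+1})$ over the state of $x_{k-1}^n$, the closure of $\P(S_k^n\cap S_{k-1}^n)$ inherited from \autoref{th:1D1SWithP}, and the derivation of \eqref{EquationDiscrete1D1SidedWithP_R_Dt_R}) coincide with the paper's proof. Where you genuinely diverge is in the treatment of $\P(S_k\cap R_{k-1})$. The paper stays at the discrete level: it derives a second difference equation for $\P(S_k^n\cap R_{k-1}^n)$ directly from the transition probabilities, passes to the limit to obtain a coupled ODE system for the pair $\{[S_k],[S_k\cap R_{k-1}]\}$, and then eliminates the second unknown by noticing that $w_k=\big(\tfrac{r_k}{q_k}[S_k]+[S_k\cap R_{k-1}]\big)e^{\int_0^t p_k}$ has an explicitly integrable derivative. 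You instead work in continuous time with the survival functional $\phi=\mathbb{E}\big[e^{-\int_0^t q_k\mathds{1}_{I_{k-1}}}\big]$ and its decomposition $\alpha+\beta+\gamma$ by the terminal state of $x_{k-1}$; your identities $\phi'=-q_k\beta$ and $\gamma'=r_{k-1}\beta$ are exactly the continuous-time content of the paper's two discrete computations, and your integration by parts plays the role of the paper's integrating factor $w_k$. Your route buys a cleaner view of why the closure works (the only structural input is the independence of $x_k^0$ from the trajectory of $x_{k-1}$ on a one-sided lattice) and of where $q_k>0$ enters; the paper's route has the advantage of remaining entirely within the discrete framework until the final limit. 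Two bookkeeping remarks: carried out carefully, both derivations produce the recovery rate of node $k-1$ (your $r_{k-1}$) where the printed statement has $r_k$ --- the paper's proof writes $r_k^n$ for the rate of the transition from $I_{k-1}^n$ to $R_{k-1}^{n+1}$, so the two agree only when $r$ is spatially homogeneous --- and your integration by parts yields the memory term $\int_0^t\big(\tfrac{r_{k-1}}{q_k}\big)'(s)\,e^{\int_0^s p_k}\,\tfrac{[S_k](s)}{[S_k^0]}\,ds$, i.e. with an extra factor $e^{\int_0^s p_k}$ relative to \eqref{EquationDiscrete1D1SidedWithP_R_Dt}, a discrepancy that vanishes when $r/q$ is time-independent or $p_k\equiv0$. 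Neither point is a gap in your argument; both are discrepancies with the printed formula rather than with the underlying derivation.
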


\begin{proof}%[of~\autoref{th:1D1SWithP_R}]
In the following calculations, we frequently make use of the identities
\begin{equation}\label{RelationClasssique}
\begin{gathered}
\P(A\cap B)=\P(A\vert B)\P(B),\qquad \P(A_1\cap A_2\vert B)=\P(A_1\vert A_2\cap B)\P(A_2\vert B),
\\
 \P(A_1\cap A_2\vert B)\P(B)=\P(A_1\vert A_2\cap B)\P(A_2\cap B).
 \end{gathered}
\end{equation}
We have that
\begin{align*}
\P\( S_k^n\)-\P\( S_k^{n+1}\)
%&=\P\(\overline{S_k^{n+1}}\cap S_k^n\)\\
&=\P\(I_k^{n+1}\cap S_k^n\)\\
&=\P\(I_k^{n+1}\cap S_k^n\cap S_{k-1}^n\)+\P\(I_k^{n+1}\cap S_k^n\cap I_{k-1}^n\)+\P\(I_k^{n+1}\cap S_k^n\cap R_{k-1}^n\)\\
&= \underbrace{\P\left({I_k^{n+1}}\big\vert S_k^n\cap S_{k-1}^n \right)}_{=p_k^n\dt}\P\left( S_k^n\cap S_{k-1}^n\)
+ \underbrace{\P\left({I_k^{n+1}}\big\vert S_k^n\cap I_{k-1}^n\right)}_{=(p_k^n+q_k^n)\dt}\P\left(S_k^n\cap I_{k-1}^n\)\\
&\qquad + \underbrace{\P\left({I_k^{n+1}}\big\vert S_k^n\cap R_{k-1}^n\right)}_{=p_k^n\dt}\P\left(S_k^n\cap R_{k-1}^n\).
\end{align*}
and so
\begin{equation}\label{1D1S_R_Proof_1}
\frac{\P\( S_k^{n+1}\)-\P\( S_k^n\)}{\dt}+p_k^n \P(S_k^n)+q_k^n\P\(S_k^n\cap I_{k-1}^n\)=0.
\end{equation}

Using the closure relation~\eqref{1D1S_P_Closure}, we deduce that
\begin{equation}\label{1D1S_P_R_Proof_RelationClosure}
\begin{aligned}
\P\(S_k^n\cap I_{k-1}^n\)
&= \P(S_k^n)- \P\(S_k^n\cap S_{k-1}^n\)-\P\(S_k^n\cap R_{k-1}^n\)\\
&= \P(S_k^n)- e^{-\int_0^{t^n} p_k(s)ds}\P(S_k^0)\P(S_{k-1}^n)(1+O(\dt))-\P\(S_k^n\cap R_{k-1}^n\).
\end{aligned}
\end{equation}
Substituting this expression into~\eqref{1D1S_R_Proof_1}, we deduce
\begin{equation}\label{1D1S_R_P_Proof_EquationOnS_k}
\frac{\P\( S_k^{n+1}\)-\P\( S_k^n\)}{\dt}+(p_k^n+q_k^n)\P(S_k^n) - q_k^ne^{-\int_0^{t^n} p_k(s)ds}\P(S_k^0)\P(S_{k-1}^n) - q_k^n\P\(S_k^n\cap R_{k-1}^n\)=O(\dt).
\end{equation}
{
Letting $\dt\to0$, we finally deduce
\begin{equation}\label{1D1S_R_P_Proof_EquationOnS_k_limite}
\frac{d[S_k]}{dt}(t)+(p_k(t)+q_k(t))[S_k] - q_ke^{-\int_0^{t} p_k}[S_k^0][S_{k-1}] - q_k[S_k\cap R_{k-1}](t)=0.
\end{equation}
}

\paragraph*{}
Next, let us derive an equation for $[S_k\cap R_{k-1}]$:
\begin{align*}
&\P\( S_k^{n+1}\cap R_{k-1}^{n+1}\)-\P\( S_k^{n}\cap R_{k-1}^n\)\\
&= \P\( S_k^{n+1}\cap R_{k-1}^{n+1}\)-\P\( S_k^{n+1}\cap R_{k-1}^n\)+ \P\( S_k^{n+1}\cap R_{k-1}^n\) -\P\( S_k^{n}\cap R_{k-1}^n\)\\
%&=  \P\(S_k^{n+1}\cap \overline{R_{k-1}^n}\cap R_{k-1}^{n+1}\)     -     \P\(\overline{S_k^{n+1}}\cap S_k^n\cap R_{k-1}^n\)  \\
&=  \P\(S_k^{n+1}\cap I_{k-1}^n\cap R_{k-1}^{n+1}\)           -          \P\(I_k^{n+1}\cap S_k^n\cap R_{k-1}^n\)\\
&= \underbrace{\P\left({R_{k-1}^{n+1}}\big\vert S_k^{n+1}\cap I_{k-1}^n\right)}_{=r_k^n\dt}\P\left(S_k^{n+1}\cap I_{k-1}^n\)        
-     \underbrace{\P\left({I_k^{n+1}}\big\vert S_k^n\cap R_{k-1}^n\right)}_{=p_k^n\dt}\P\left(S_k^n\cap R_{k-1}^n\)\\
&=r_k^n\dt\P\left(S_k^{n+1}\cap I_{k-1}^n\)       -    p_k^n\dt\P\left(S_k^n\cap R_{k-1}^n\)\\
&= r_k^n\dt\P\left(S_k^{n}\cap I_{k-1}^n\)       -    p_k^n\dt\P\left(S_k^n\cap R_{k-1}^n\) +r_k^n\dt\left[\P\left(S_k^{n+1}\cap I_{k-1}^n\) -\P\left(S_k^{n}\cap I_{k-1}^n\) \right]\\
&=r_k^n\dt\left[\P\left(S_k^{n}\)-\P\left(S_k^{n}\cap S_{k-1}^n\)-\P\left(S_k^{n}\cap R_{k-1}^n\)\right] -    p_k^n\dt\P\left(S_k^n\cap R_{k-1}^n\)-r_k^n\dt\P\left({I_k^{n+1}}\cap S_k^{n}\cap I_{k-1}^n\) 
\end{align*}
Using~\eqref{1D1S_P_Closure} and the relation
\begin{align*}
\P\left(\overline{S_k^{n+1}}\cap S_k^{n}\cap I_{k-1}^n\) 
=\P\left(I_k^{n+1}\cap S_k^{n}\cap I_{k-1}^n\)
&=\P\left(I_k^{n+1}\vert S_k^{n}\cap I_{k-1}^n\)\P\left(S_k^{n}\cap I_{k-1}^n\)\\
&= (p_k^n+q_k^n)\dt \P\left(S_k^{n}\cap I_{k-1}^n\),
\end{align*}
we deduce that
\begin{equation}\label{1D1S_R_P_Proof_EquationOnS_kR_k1_Dt}
\begin{aligned}
&\frac{\P\( S_k^{n+1}\cap R_{k-1}^{n+1}\)-\P\( S_k^{n}\cap R_{k-1}^n\)}{\dt}-r_k^n\P\left(S_k^{n}\) +r_k^n\(1-p_k^n\dt\)^n\P(S_k^0)\P\( S_{k-1}^n\) + (p_k^n+r_k^n)\P\left(S_k^n\cap R_{k-1}^n\)\\
&\qquad \qquad =-r_k^n(p_k^n+q_k^n)\dt \P\left(S_k^{n}\cap I_{k-1}^n\).
\end{aligned}
\end{equation}
{
Letting $\dt\to0$, we deduce
\begin{equation}\label{1D1S_R_P_Proof_EquationOnS_kR_k1_Dt_limite}
\frac{d[S_k\cap R_{k-1}]}{dt}(t)-r_k(t)[S_k](t) +r_ke^{-\int_0^t p(s)ds}[S_k^0][S_{k-1}] + (p_k+r_k)[S_k\cap R_{k-1}]=0.
\end{equation}
}

\paragraph*{}
{
At this stage, equations~\eqref{1D1S_R_P_Proof_EquationOnS_k_limite} and~\eqref{1D1S_R_P_Proof_EquationOnS_kR_k1_Dt_limite} form an autonomous system of ODEs governing the evolution of the family of probabilities $\{[S_k],[S_k\cap R_{k-1}]\}_{k\in\mathcal{K}}$. This system can actually be reduced to an autonomous system involving only the probabilities $\{[S_k]\}_{k\in\mathcal{K}}$.
Let us set
%\footnote{
%Here, we assume for simplicity that $q_k(t)>0$. If $q_k$ vanishes, one can consider the same setting but replacing $q$ by $\tilde q_k(t):=q_k(t)+\delta$ with some $\delta\to0$, and then let eventually $\delta\to0$.
%} 
$$w_k(t):=\left(\frac{r_k(t)}{q_k(t)}[S_k](t)+[S_k\cap R_{k-1}](t)\right)e^{\int_0^t p_k}.$$
Using equations~\eqref{1D1S_R_P_Proof_EquationOnS_k_limite} and\eqref{1D1S_R_P_Proof_EquationOnS_kR_k1_Dt_limite} satisfied by $[S_k]$ and $[S_k\cap R_{k-1}]$, we find that $w_k$ satisfies
 satisfies
\begin{equation*}
w_k'(t)=\left(\frac{r_k}{q_k}\right)'(t)[S_k](t).
\end{equation*}
Therefore,
\begin{equation*}
w_k(t)=w_k^0+\int_0^t \left(\frac{r_k}{q_k}\right)'(s)[S_k](s)ds,
\end{equation*}
and so
\begin{equation}\label{Relation_SkRk1_Sk}
{}[S_k\cap R_{k-1}]=-\frac{r_k}{q_k}[S_k]+ e^{-\int_0^t p_k}\left(\frac{r_k(0)}{q_k(0)}[S_k^0]+[S_k^0][R_{k-1}^0]+\int_0^t \left(\frac{r_k}{q_k}\right)'(s)[S_k](s)ds \right).
\end{equation}
Substituting the above expression in~\eqref{1D1S_R_P_Proof_EquationOnS_k},
%, and using~\eqref{FromProductToExp},
we find~\eqref{EquationDiscrete1D1SidedWithP_R_Dt}.
}

To derive equation~\eqref{EquationDiscrete1D1SidedWithP_R_Dt_R} for $[R_k](t)$, we note that
\begin{align*}
\P\(R_k^{n+1}\)-\P\(R_k^n\)
%= \P\(R_k^{n+1}\cap \overline{R_k^{n}}\)
= \P\(R_k^{n+1}\cap I_k^{n}\)
=\P\(R_k^{n+1}\vert I_k^{n}\)\P\( I_k^{n}\)
= r_k^n\dt \P\( I_k^{n}\).
\end{align*}
Therefore,
\begin{equation*}
\frac{\P\(R_k^{n+1}\)-\P\(R_k^n\)}{\dt}-r_k^n\P\( I_k^{n}\)=0.
\end{equation*}
Using~\eqref{S+I+R=1} and letting $\dt\to0$ in the above equality gives~\eqref{EquationDiscrete1D1SidedWithP_R_Dt_R}.
\end{proof}

\begin{remark}
{
Equations~\eqref{EquationDiscrete1D1SidedWithP_R_Dt} form a system of integrodifferential equation that only involves the probabilities $\{[S_k](t)\}_{t\geq0,k\in\mathcal{K}}\}$. If the ratio $\frac{r_k}{q_k}$ does not depend on $t$, it boils down to the system of ODEs
\begin{equation}\label{EquationDiscrete1D1SidedWithP_R_Dt_NOTIME}
{}
[S_k]'+\big(p_k+q_k+r_k\big)[S_k]= e^{-\int_0^tp_k}[S_k^0]\left(q_k[S_{k-1}]+ q_k[R_{k-1}^0]+r_k \right),
\qquad
 [S_k](0)=[S_k^0].
\end{equation}
}
An apparently different system of ODEs for $[S_k]$ and $[R_k]$ was derived in~\cite{Fibich2016} for the case of the deterministic initial conditions $[S_k^0]\equiv1$. 
In that system, however, the equation for $[S_k]$ was not decoupled from that for $[R_k]$.
\end{remark}

\autoref{th:1D1SWithP_R} gives a complete deterministic description of the stochastic process, since once $[S_k]$ and $[R_k]$ are computed through equations~\eqref{EquationDiscrete1D1SidedWithP_R_Dt}-\eqref{EquationDiscrete1D1SidedWithP_R_Dt_R}, $[I_k]$ is deduced from~\eqref{S+I+R=1}.

\subsection{Explicit solutions}

We can solve equations~\eqref{EquationDiscrete1D1SidedWithP_R_Dt}-\eqref{EquationDiscrete1D1SidedWithP_R_Dt_R} explicitely in the homogeneous case.
\begin{proposition}[Spatially homogeneous solution]\label{th:Coro_Homogeneous_1D1S_General}
Assume the conditions of \autoref{th:1D1SWithP_R}, and assume further that the initial conditions $[S_k^0]\equiv [S^0]$ do not depend on the space variable $k$ {and that the parameters $p_k(t)\equiv p$, $q_k(t)=q>0$ and $r_k(t)$ do not depend on the time variable $t$ nor the space variable $k$. Then the solutions $[S_k]=[S]$ and $[R_k]=[R]$ of~\eqref{EquationDiscrete1D1SidedWithP_R_Dt}-\eqref{EquationDiscrete1D1SidedWithP_R_Dt_R} do not depend on $k$, and are given by
%\note{My Mathematica license has expired so I cannot compute the explicit formula for $[R](t)$}
%\label{Expression_S_Uniform_SIR_timeVarying}
\begin{equation}\label{Expression_S_Uniform_SIR}
{}
[S](t)=[S^0]e^{-\left(p+q+r\right)t+[S^0]q\frac{1-e^{-pt}}{p}}\left(1+\left(r+q[R^0]\right)\int_0^te^{(q+r)\tau-q[S^0]\frac{1-e^{-p\tau}}{p}} d\tau\right),
\end{equation}
\begin{equation}\label{Expression_R_Uniform_SIR}
{}
[R](t)=1-(1-[R^0])e^{-rt}-re^{-rt}
\int_0^te^{r\tau}[S](\tau)d\tau.
%[S^0]e^{-(p+q)\tau +q[S^0]\frac{1-e^{-p\tau}}{p}}
%\left(1+(r+q[R^0])\int_0^\tau e^{(q+r)\tau' -q[S^0]\frac{1-e^{-p\tau'}}{p}}d\tau'\right)d\tau.
\end{equation}
}

%and
%\begin{equation}\label{Expression_R_Uniform_SIR_timeVarying}
%\begin{aligned}
%[R](t)=
%&1-(1-[S^0])e^{-\int_0^t r}
%-[S^0]e^{-\int_0^t r}\int_0^t\Bigg\{
%r(\tau)e^{-\int_0^\tau \left[p(\tau')+\left(1-[S^0]e^{-\int_0^{\tau'}p}\right)q(\tau')\right]d\tau'}
%\\
%& \qquad
%\times\Bigg(
%1
%+\int_0^\tau e^{\int_0^{\tau'}\left[\left(1-[S^0]e^{-\int_0^{\tau''}p}\right)q(\tau'')+r(\tau'')\right]d\tau''}
%\left(r(\tau')\frac{q(0)}{r(0)}[R^0]+r(\tau')\right)
%\Bigg)d\tau'
%\Bigg\}d\tau
%\end{aligned}
%\end{equation}
\end{proposition}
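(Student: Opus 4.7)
My strategy is to reduce the coupled lattice system of Theorem~\ref{th:1D1SWithP_R} to two scalar linear ODEs by exploiting the full spatial homogeneity, then solve each by the integrating factor method.

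First I would invoke translation invariance: since $[S_k^0]\equiv[S^0]$, $[R_k^0]\equiv[R^0]$ and the parameters $p,q,r$ do not depend on $k$, the Poisson dynamics is invariant under the shift $k\mapsto k+1$, so $[S_k](t)\equiv[S](t)$ and $[R_k](t)\equiv[R](t)$. Because $r_k/q_k$ is constant in $t$ under the stated assumptions, the integral term in \eqref{EquationDiscrete1D1SidedWithP_R_Dt} vanishes and we are in the simplified setting of \eqref{EquationDiscrete1D1SidedWithP_R_Dt_NOTIME}. Substituting $[S_{k-1}]=[S]$ and collecting the $[S]$-terms gives the scalar Cauchy problem
\begin{equation*}
[S]'(t)+\Bigl((p+q+r)-q[S^0]e^{-pt}\Bigr)[S](t)= [S^0]\bigl(r+q[R^0]\bigr)e^{-pt},\qquad [S](0)=[S^0].
\end{equation*}

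Next I would solve this linear equation with the integrating factor
\begin{equation*}
\mu(t):=\exp\!\left(\int_0^t\!\Bigl((p+q+r)-q[S^0]e^{-ps}\Bigr)ds\right)=\exp\!\left((p+q+r)t-q[S^0]\tfrac{1-e^{-pt}}{p}\right).
\end{equation*}
Multiplying, recognizing $(\mu[S])'=\mu(t)\,[S^0](r+q[R^0])e^{-pt}$, and integrating from $0$ to $t$ (using $[S](0)=[S^0]$) produces exactly~\eqref{Expression_S_Uniform_SIR} after dividing by $\mu(t)$ and noting that the exponential inside the integrand simplifies to $e^{(q+r)\tau-q[S^0]\frac{1-e^{-p\tau}}{p}}$.

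Finally, for $[R]$, equation~\eqref{EquationDiscrete1D1SidedWithP_R_Dt_R} under homogeneity together with $[I]=1-[S]-[R]$ reads
\begin{equation*}
[R]'(t)+r[R](t)=r\bigl(1-[S](t)\bigr),\qquad [R](0)=[R^0].
\end{equation*}
Using the integrating factor $e^{rt}$ and integrating, together with $r\int_0^t e^{r\tau}d\tau=e^{rt}-1$, yields~\eqref{Expression_R_Uniform_SIR} directly.

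I do not anticipate any serious obstacle here; the only thing to be careful about is correctly handling the $e^{-pt}[S^0]$ factor that makes the $[S]$-equation non-autonomous (so that the effective drift depends on $t$ through $\mu$), and verifying that the reduction to \eqref{EquationDiscrete1D1SidedWithP_R_Dt_NOTIME} is legitimate because $(r_k/q_k)'\equiv0$ in this time-homogeneous regime. Once these two points are observed, the remaining computation is a routine application of the variation-of-constants formula.
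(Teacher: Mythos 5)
Your proposal is correct and follows exactly the paper's route: the paper likewise invokes translation symmetry to reduce to $k$-independent quantities and then performs a ``straightforward integration'' of \eqref{EquationDiscrete1D1SidedWithP_R_Dt}--\eqref{EquationDiscrete1D1SidedWithP_R_Dt_R}, which is precisely your integrating-factor computation. You simply supply the details the paper omits, including the correct observation that $(r_k/q_k)'\equiv 0$ reduces the integrodifferential equation to \eqref{EquationDiscrete1D1SidedWithP_R_Dt_NOTIME}.
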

\begin{proof}
By translation symmetry, $[S_k]$ and $[R_k]$ do not depend on $k$. The result follows from a straighforward integration of equations~\eqref{EquationDiscrete1D1SidedWithP_R_Dt}-\eqref{EquationDiscrete1D1SidedWithP_R_Dt_R}.
\end{proof}

\begin{remark}
%If, in addition to the assumptions of \autoref{th:Coro_Homogeneous_1D1S_General}, we further assume that the coefficients $p$, $q$ and ${r>0}$ do not depend on time $t$, then we obtain the formulae
%\begin{equation}\label{Expression_S_Uniform_SIR}
%{}[S](t) =[S^0] e^{-(p+q+r)t + [S^0]q \frac{1-e^{-pt}}{p}}
%\left(
%1
%+
%(r+q[R^0])
%\int_0^t e^{(q+r)\tau - [S^0]q \frac{1-e^{-p\tau }}{p}}d\tau
%\right),
%\end{equation}
%and
%\begin{equation}\label{Expression_R_Uniform_SIR}
%{}[R](t)=[R^0]e^{-rt}+r\left(1-
%[S^0]\int_0^te^{-(p+q+r)\tau +q[S^0]\frac{1-e^{-p\tau}}{p}}
%\left[1+(r+q[R^0])\int_0^\tau e^{(q+r)\tau' -q[S^0]\frac{1-e^{-p\tau'}}{p}}d\tau'\right]d\tau
%\right).
%\end{equation}
Expression~\eqref{Expression_S_Uniform_SIR} was already obtained in~\cite[Lemma~4.2]{Fibich2017} for $[S^0]\equiv1$ and $[R^0]\equiv0$.
\end{remark}

We can also explicitely solve the case of a time-dependent source term located at $k=0$.
\begin{proposition}[SIR-Bass time-varying point source]\label{th:SourceTerm_R}
Assume the conditions of~\autoref{th:1D1SWithP_R}, that the individuals are placed on a semi infinite line $\mathcal{K}=\{0,1,2,\dots\}$, that $q_k(t)\equiv q>0$ is constant, that $p_k(t)\equiv0$ for all $k>0$, and allow a source term $p_0(t)\geq0$ at $k=0$. Then
\begin{equation}\label{SourceTermExplicit_R}
{}[S_0](t)=e^{-\int_0^tp_0},\qquad [S_k](t)=1-\int_0^t\frac{(q\tau)^{k-1}}{(k-1)!}qe^{-(q+r)\tau}\left(1-e^{-\int_0^{t-\tau}p_0}\right)d\tau,\qquad k=1,2\dots
\end{equation}
In particular,
{
\begin{equation}\label{SourceTermExplicit_limit_R}
\lim\limits_{t\to+\infty}[S_k](t)=1-\left(\frac{q}{q+r}\right)^k\left(1-e^{-\int_0^{+\infty}p_0}\right), \qquad k=0,1,2,\dots
\end{equation}
}
\end{proposition}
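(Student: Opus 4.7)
The plan is to reduce equation~\eqref{EquationDiscrete1D1SidedWithP_R_Dt} from \autoref{th:1D1SWithP_R} to a simple linear recursion, solve it by convolution as in the proof of \autoref{th:SourceTerm}, and then pass to the limit by dominated convergence. Under the stated assumptions, all nodes are initially susceptible (so $[S_k^0]\equiv 1$ and $[R_{k-1}^0]\equiv 0$), the parameters $q$ and $r$ are constant (so the integral term $\int_0^t (r_k/q_k)'(s)\,\cdots\,ds$ vanishes), and $p_k\equiv 0$ for $k\geq 1$ (so $e^{-\int_0^tp_k}\equiv 1$). Substituting in~\eqref{EquationDiscrete1D1SidedWithP_R_Dt} yields the autonomous system
\begin{equation*}
[S_k]'(t)+(q+r)[S_k](t)=q[S_{k-1}](t)+r,\qquad [S_k](0)=1,\qquad k\geq 1,
\end{equation*}
while for the boundary node $k=0$, the only infection mechanism is the source term, so $[S_0]'(t)=-p_0(t)[S_0](t)$, giving $[S_0](t)=e^{-\int_0^tp_0}$ as announced.

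The next step is to linearize the recursion. Setting $U_k(t):=1-[S_k](t)$ (the probability that node $k$ has been infected by time $t$, whether still infected or recovered), a direct substitution shows that the constant $r$ cancels and
\begin{equation*}
U_k'(t)+(q+r)U_k(t)=qU_{k-1}(t),\qquad U_k(0)=0,\qquad k\geq 1,
\end{equation*}
with $U_0(t)=1-e^{-\int_0^tp_0}$. Duhamel's formula then gives $U_k=G\star U_{k-1}$ with kernel $G(t):=qe^{-(q+r)t}$, hence by induction $U_k=G^{\star k}\star U_0$.

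I would then compute the iterated convolution explicitly: a straightforward induction (the base case being direct integration, and the induction step reducing to $\int_0^t(t-s)^{k-1}\,ds=t^k/k$) yields
\begin{equation*}
G^{\star k}(t)=\frac{(qt)^{k-1}}{(k-1)!}\,qe^{-(q+r)t}.
\end{equation*}
Convolving with $U_0$ and performing the change of variable $\tau=t-s$ gives formula~\eqref{SourceTermExplicit_R} upon using $[S_k]=1-U_k$.

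For the asymptotic identity~\eqref{SourceTermExplicit_limit_R}, I would apply the dominated convergence theorem exactly as in the proof of \autoref{th:SourceTerm}: since $1-e^{-\int_0^{t-\tau}p_0}\to 1-e^{-\int_0^{\infty}p_0}$ pointwise in $\tau$ and is bounded by $1$ while the kernel is integrable,
\begin{equation*}
\lim_{t\to\infty}U_k(t)=\bigl(1-e^{-\int_0^{\infty}p_0}\bigr)\int_0^{\infty}\frac{(q\tau)^{k-1}}{(k-1)!}\,qe^{-(q+r)\tau}\,d\tau=\Bigl(\frac{q}{q+r}\Bigr)^{k}\bigl(1-e^{-\int_0^{\infty}p_0}\bigr),
\end{equation*}
where the last equality is the Gamma integral $\int_0^{\infty}\tau^{k-1}e^{-(q+r)\tau}d\tau=(k-1)!/(q+r)^k$. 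The main obstacle is purely bookkeeping: making sure that the reduction of the integrodifferential equation~\eqref{EquationDiscrete1D1SidedWithP_R_Dt} to the clean linear recursion for $U_k$ uses all the simplifying hypotheses (constant $r/q$, $p_k\equiv 0$ for $k\geq 1$, trivial initial data) correctly; once this reduction is made, both the explicit formula and the limit follow by the same convolution/dominated-convergence argument used for the Bass case.
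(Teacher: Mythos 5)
Your proposal is correct and follows essentially the same route as the paper's proof: reduce~\eqref{EquationDiscrete1D1SidedWithP_R_Dt} to the linear recursion $[S_k]'+(q+r)[S_k]=q[S_{k-1}]+r$, solve by iterated convolution with the kernel $qe^{-(q+r)t}$, and conclude the limit by dominated convergence and the Gamma integral. The only cosmetic difference is that you linearize via $U_k=1-[S_k]$, whereas the paper observes that the constant $1$ is a fixed point of the convolution recursion and iterates on $[S_k]-1$; these are the same step up to sign.
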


\begin{proof}
Under our assumptions, equation~\eqref{EquationDiscrete1D1SidedWithP_R_Dt} reduces to
$$
{}[S_0]'(t)+p_0(t)[S_0](t)=0,\qquad [S_0](0)=1,
$$
and
$$
{}[S_k]'(t)+q\left([S_k](t)-[S_{k-1}](t)\right)+r\left([S_k](t)-1\right)=0,\qquad [S_k](0)=0,\qquad k=1,2\dots
$$
Therefore, $[S_0](t)=e^{-\int_0^tp_0}$ and it can be verified by direct substitution that the solution for $k\geq1$ is given by
\begin{equation}\label{Recursion1_R}
{}[S_k](t)=e^{-(q+r)t}+ e^{-(q+r)\cdot}\star_t \left(r+q[S_{k-1}](\cdot)\right),
\end{equation}
where
$$
f(\cdot)\star_t g(\cdot):= \int_0^t f(t-\tau) g(\tau)d\tau.
$$
Note also that the constant $1$ is a fixed point of the recursion relationship~\eqref{Recursion1_R}, namely,
\begin{equation*}
{}1=e^{-(q+r)t}+ e^{-(q+r)\cdot}\star_t \left(r+q\right),
\end{equation*}
Therefore, for all $k>0$,
\begin{equation*}
{}[S_{k}](t)-1 = q e^{-(q+r)\cdot}\star_t\left([S_{k-1}](\cdot)-1\right).
\end{equation*}
By induction, we infer
\begin{equation*}
{}[S_{k}](t)=1+ q^k\underbrace{e^{-(q+r)\cdot}\star_t\cdots\star_t e^{-(q+r)\cdot}}_{k\text{ terms}}\star_t\left([S_{0}](\cdot)-1\right),
\end{equation*}
where we have used the associativity of the convolution product.
A straightforward induction shows that $$
q^k\underbrace{e^{-(q+r)\cdot}\star_t\cdots\star_t e^{-(q+r)\cdot}}_{k\text{ terms}}= \frac{(qt)^{k-1}}{(k-1)!}qe^{-(q+r)t}.
$$
Therefore
\begin{equation*}
{}[S_{k}](t)=1+ \left(\tau\mapsto\frac{(q\tau)^{k-1}}{(k-1)!}qe^{-(q+r)\tau}\right)\star_t\left([S_{0}](\cdot)-1\right).
\end{equation*}
Using 
$
[S_0](t)=e^{-\int_0^tp_0},
$
we deduce that
\begin{equation*}
{}[S_{k}](t)=1+ \left(\tau\mapsto \frac{(q\tau)^{k-1}}{(k-1)!}qe^{-(q+r)\tau}\right)\star_t\left(\tau\mapsto e^{-\int_0^\tau p_0}-1\right).
\end{equation*}
which proves~\eqref{SourceTermExplicit_R}.

Let us now prove~\eqref{SourceTermExplicit_limit_R}.
For a fixed $k\in\{0,1,\dots\}$, by the dominated convergence theorem,
$$
\lim\limits_{t\to+\infty}\int_0^t\frac{(q\tau)^{k-1}}{(k-1)!}qe^{-(q+r)\tau}\left(1-e^{-\int_0^{t-\tau}p_0}\right)d\tau
=
\left(1-e^{-\int_0^{+\infty}p_0}\right)\int_0^{+\infty}\frac{(q\tau)^{k-1}}{(k-1)!}qe^{-(q+r)\tau}d\tau.
$$
Using the relation
$$\int_0^{+\infty}\frac{(q\tau)^{k-1}}{(k-1)!}qe^{-(q+r)\tau}d\tau=\left(\frac{q}{q+r}\right)^k$$
completes the proof of~\eqref{SourceTermExplicit_limit_R}.
\end{proof}

Thus, the limit~\eqref{SourceTermExplicit_limit_R} depends on space $k$ if and only if $r>0$.
Note also that, for a fixed $t\geq0$, as $k\to+\infty$ in~\eqref{SourceTermExplicit_R}, we get that $[S_k](t)\to 1$. This expresses again the fact that a source term located at $k=0$ cannot influence far nodes in finite time. 

{
From~\eqref{SourceTermExplicit_limit_R}, we observe that
\begin{equation}
\frac{1-\lim\limits_{t\to+\infty}[S_{k+1}](t)}{1-\lim\limits_{t\to+\infty}[S_{k}](t)}=\frac{q}{q+r}.
\end{equation}
This expression is actually equal to the probability that node $k+1$ will eventually become infected knowing that node $k$ has been infected.
}
%The denominator is the probability that node $k$ will be infected at some time. Once that happens, and so long that $k$ does not recover, at each subsequent time interval node $k+1$

\subsection{$SIR$ model on 1D one-sided lattices}\label{sec:Comparison_SIR}
When $p_k^n\equiv 0$, i.e., when there are no source terms, the model~\eqref{Assumption_1D1S_WithRecovery_Intro} reduces to the $SIR$ model on a 1D one-sided lattice. Assuming that the parameters and initial conditions are homogeneous (they do not depend on time nor space), if we let $p\to0$ in~\eqref{Expression_S_Uniform_SIR}-\eqref{Expression_R_Uniform_SIR}, then $\frac{1-e^{-pt}}{p}\to t$, and so we obtain
\begin{equation*}
{}[S](t) =\frac{[S^0]}{r+q(1-[S^0])}\left( r+q[R^0]+q[I^0]e^{-\left(r+q(1-[S^0])\right)t}\right),
\end{equation*}
and
%\begin{equation*}
%{}[R](t)=1-[S^0]\frac{r+q[R^0]}{r+q(1-[S^0])}-\frac{[I^0]}{1-[S^0]}e^{-rt}+\frac{r[S^0][I^0]}{(1-[S^0])\big(r+q(1-[S^0])\big)}e^{-(r+q(1-[S^0]))t}.
%\end{equation*}
{
\begin{equation*}
{}
[R](t)=1-[S^0]\frac{r+q[R^0]}{r+q(1-[S^0])}-\frac{[I^0]}{1-[S^0]}e^{-rt}+\frac{r[S^0][I^0]}{r+q(1-[S^0])(1-[S^0])}e^{-(r+q(1-[S^0]))t}.
\end{equation*}
}
Therefore,
\begin{equation}\label{limit_Sinfty}
{}[S^\infty]:=\lim\limits_{t\to+\infty}[S](t)= [S^0]\frac{r+q[R^0]}{r+q(1-[S^0])},
%= [S^0]\frac{r+q[R^0]}{r+q([I^0]+[R^0])},
\end{equation}
and
$${}[R^\infty]:=\lim\limits_{t\to+\infty}[R](t)= 1-[S^0]\frac{r+q[R^0]}{r+q(1-[S^0])}=1-[S^\infty],
$$
and so, by~\eqref{S+I+R=1}, we deduce that $[I^\infty]:=\lim\limits_{t\to+\infty}[I](t)=0.$

\paragraph*{}
The SIR model on a 1D one-sided lattice~\eqref{Assumption_1D1S_WithRecovery_Intro} satisfies a threshold phenomenon for the initial outbreak of an epidemic, namely, an initially small number of infected ($S^0\approx 1$, $0<I^0\ll1$, $R^0=0$) triggers an epidemic if and only if
\begin{equation}\label{threshold_ClassicalSIR}
q-r>0.
\end{equation}
Indeed, under the same conditions as in \autoref{th:Coro_Homogeneous_1D1S_General}, if we further assume that $p=0$ and $[R^0]=0$, using~\eqref{S+I+R=1}, \eqref{EquationDiscrete1D1SidedWithP_R_Dt} and \eqref{EquationDiscrete1D1SidedWithP_R_Dt_R}, we find that $[I]$ satisfies 
\begin{equation*}
{}[I]'-\big(r+q(1-[S^0])\big)(1-[I]) +r[S^0]+r[I]=0,
\end{equation*}
i.e.,
\begin{equation*}
{}[I]'-(r+q)[I^0]+\big(2r+q[I^0]\big)[I]=0.
\end{equation*}
If we linearize this equation around $0<[I]\approx [I^0]\ll1$, we find that $[I]$ increases in small time if and only if~\eqref{threshold_ClassicalSIR} holds.

\paragraph*{}
Let us now solve explicitely the ``patient-zero'' problem for the $SIR$ model:
\begin{proposition}[Patient-zero $SIR$ problem]\label{th:coro_patient_zero_SIR}
Assume that $p_k^n\equiv0$, $q_k^n\equiv q$ and $r_k^n\equiv r$ in~\eqref{Assumption_1D1S_WithRecovery_Intro}, that the nodes are placed on a semi-inifinite line $\mathcal{K}=\{0,1,2,\dots\}$ and that, initially,
\begin{equation}\label{Initial_Condition_PatientZero_1D1S_General}
x_{k=0}^0=i,\qquad x_k^0=s,\quad k=1,2,\dots
\end{equation}
Then, the solution of~\eqref{EquationDiscrete1D1SidedWithP_R_Dt} is given by
\begin{equation}\label{Explicit_SIR_Sk(t)}
{}[S_0](t)\equiv0,\qquad [S_k](t)=1-\left(\frac{q}{q+r}\right)^k\left(1- e^{-(q+r)t}\sum\limits_{l=0}^{k-1} \frac{\left((q+r) t\right)^l}{l!}\right),\qquad k=1,2,\dots
\end{equation}

If we denote by $N_K(t)=\sum_{k=1}^K \mathds{1}_{x_k(t)=s}$ the total number of susceptibles on a finite line of length $K+1>0$, then
\begin{equation}\label{Explicit_SIR_Ek}
\mathbb{E}[N_K^n]=K- \frac{q}{r}\left(1-\left(\frac{q}{q+r}\right)^K\right)- \frac{q+r}{r} e^{-(q+r)t} \sum\limits_{l=1}^{K}\frac{\left(q t\right)^{K-l}}{(K-l)!} \left(1-\left(\frac{q}{q+r}\right)^{l+1}\right).
\end{equation}
\end{proposition}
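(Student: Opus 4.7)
My plan is to verify~\eqref{Explicit_SIR_Sk(t)} by direct substitution into a specialised form of the ODE system~\eqref{EquationDiscrete1D1SidedWithP_R_Dt}, and then to derive~\eqref{Explicit_SIR_Ek} from $\mathbb{E}[N_K(t)]=\sum_{k=1}^K[S_k](t)$ by an elementary rearrangement of sums.

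For the first step, since $p_k\equiv 0$ and $q_k\equiv q$, $r_k\equiv r$ are constants, the ratio $r_k/q_k$ is independent of $t$, so~\eqref{EquationDiscrete1D1SidedWithP_R_Dt} collapses to the ODE form~\eqref{EquationDiscrete1D1SidedWithP_R_Dt_NOTIME}. The initial condition $x_0^0=i$, combined with the absence of any mechanism producing a susceptible state, forces $[S_0]\equiv 0$; and for $k\geq 1$, the values $[S_k^0]=1$ and $[R_{k-1}^0]=0$ reduce the system to
\[
[S_k]'+(q+r)[S_k]=q[S_{k-1}]+r,\qquad [S_k](0)=1.
\]
Setting $a_k:=1-[S_k]$ turns this into the homogeneous linear recursion $a_k'+(q+r)a_k=qa_{k-1}$ with $a_k(0)=0$ for $k\geq 1$ and boundary datum $a_0\equiv 1$. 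I would then verify~\eqref{Explicit_SIR_Sk(t)} by plugging the candidate expression into this recursion; the time derivative telescopes via
\[
\frac{d}{dt}\Bigl(e^{-(q+r)t}\sum_{l=0}^{k-1}\frac{((q+r)t)^l}{l!}\Bigr)=-(q+r)e^{-(q+r)t}\frac{((q+r)t)^{k-1}}{(k-1)!},
\]
after which the match becomes purely algebraic. Equivalently, one can iterate the recursion by means of the integrating factor $e^{(q+r)t}$ and establish the formula by induction on $k$.

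For~\eqref{Explicit_SIR_Ek}, I write $\mathbb{E}[N_K(t)]=K-\sum_{k=1}^K a_k(t)$ and separate the purely geometric part $\sum_{k=1}^K\bigl(\tfrac{q}{q+r}\bigr)^k$, which evaluates immediately to $\tfrac{q}{r}\bigl(1-\bigl(\tfrac{q}{q+r}\bigr)^K\bigr)$ by a standard geometric series, from the exponential double sum. For the latter, I would swap the order of summation (fixing $l$ and letting $k$ range from $l+1$ to $K$), apply a second geometric summation in $k$, use the simplification $((q+r)t)^l\bigl(\tfrac{q}{q+r}\bigr)^l=(qt)^l$ to convert the exponential variable, and reindex $l\mapsto K-l$ to reach the form displayed in~\eqref{Explicit_SIR_Ek}. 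The main obstacle I expect is precisely this bookkeeping: tracking the geometric coefficients so that they assemble into the exact combination of $(qt)^{K-l}/(K-l)!$ factors weighted by $1-\bigl(\tfrac{q}{q+r}\bigr)^{l+1}$ requires care, whereas the verification of~\eqref{Explicit_SIR_Sk(t)} itself is mechanical once the reduced recursion is in hand.
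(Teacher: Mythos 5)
Your proposal is correct and follows essentially the same route as the paper: the paper likewise verifies~\eqref{Explicit_SIR_Sk(t)} by direct substitution into~\eqref{EquationDiscrete1D1SidedWithP_R_Dt} (your reduction to the recursion $a_k'+(q+r)a_k=qa_{k-1}$ with $a_k=1-[S_k]$ is just a cleaner way of organizing that substitution), and then obtains~\eqref{Explicit_SIR_Ek} from $\mathbb{E}[N_K]=\sum_{k=1}^K[S_k]$ by exactly the manipulations you describe — separating the geometric part, swapping the order of summation, summing the inner geometric series, and reindexing $l\mapsto K-l$. No gap.
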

\begin{proof}
See Appendix~\ref{sec:Proof_of_th:coro_patient_zero_SIR}.
\end{proof}

We see from~\eqref{Explicit_SIR_Sk(t)} that $\lim_{k\to+\infty}[S_k(t)]=1$, i.e., that the effect of patient-zero vanishes as we move far away from $k=0$.

\paragraph*{}
 It is instructive to compare the $SIR$ model on a 1D-one sided lattice to the classical aggregate $SIR$ model~\eqref{ClassicalSIR}.
Since~\eqref{ClassicalSIR} corresponds to a complete graph whereas \eqref{Assumption_1D1S_WithRecovery_Intro} corresponds to a structured sparse graph, this comparison reveals the effect of the network structure on the dynamics.
\begin{itemize}
\item First, we emphasize that equation~\eqref{EquationDiscrete1D1SidedWithP_R_Dt} for $[S_k]$ does not include $[I_k]$ and $[R_k]$, or higher-order marginals such as $[S_k\cap S_{k-1}]$. 
This property is unique to 1D one-sided graphs. Indeed, for the aggregate $SIR$ model~\eqref{ClassicalSIR}, it is not possible to derive an equation on $S$ which does not involve $I$ and $R$.

Let us mention that the system~\eqref{SIR-Bass}, and therefore also~\eqref{ClassicalSIR} obtained from~\eqref{SIR-Bass} by taking $p=0$, can also be reduced to a single equation on $u(t):=\int_0^t I$. Indeed, integrating the equation for $S$ in~\eqref{SIR-Bass}, we obtain
$
S(t)=S^0 e^{-pt-q\int_0^t I}.
$
Injecting this expression into the equation for $I$ in~\eqref{SIR-Bass}, integrating the equation on $(0,t)$, we obtain
$$
u'(t)=I^0+S^0\left(1-e^{-pt-qu(t)}\right)-ru(t).
$$

\item  The aggregate $SIR$ model satisfies the same threshold condition~\eqref{threshold_ClassicalSIR} for the outbreak of an epidemic from a small initial number of infected, see~\cite{Hethcote1989}.
\item When \eqref{threshold_ClassicalSIR} holds, the behavior after the outbreak of an epidemic can be quite different for the 1D one-sided case~\eqref{Assumption_1D1S_WithRecovery_Intro} and the aggregate model~\eqref{ClassicalSIR}. In particular, a remarkable difference is the dependence of the final state on the initial conditions.
From~\eqref{limit_Sinfty}, we observe that 
\begin{enumerate}
\item $[S^\infty]$ is an increasing function of $[S^0]$: the more susceptible individuals at initial time, the more for large times,
\item If $0<[I^0],[R^0]\ll1$ then $[S^\infty]\approx [S^0]\approx 1.$
\end{enumerate}

In contrast, for the aggregate $SIR$ model~\eqref{ClassicalSIR}, we have that
\begin{enumerate}
\item $[S^\infty]$ is a \emph{nonincreasing} function $[S^0]$, see~\cite{Hethcote1989},
\item $[I^0], [R^0]\ll1$ does not imply that $[S^\infty]\approx [S^0]\approx 1$.
\end{enumerate}
%Indeed, denoting by $S^0$ and $S^\infty$ the number of susceptibles at initial and final time respectively, and assuming that $I^0,R^0\ll1$, we have~\cite{Hethcote1989}
%\begin{equation}\label{S_infty_exact}
%\begin{aligned}
%&S^0\leq \frac{r}{q}\qquad \Longrightarrow\qquad S^\infty= S^0,
%\\
%&
%S^0> \frac{r}{q}\qquad \Longrightarrow\qquad
%\left\{\begin{aligned}
%&f(S^0)=f(S^\infty),\qquad f(s):= s-\frac{r}{q}\ln(s).\\
%&S^0> S^\infty.
%\end{aligned}\right.
%\end{aligned}
%\end{equation}
%These conditions characterize uniquely $S^\infty$ as a \emph{nonincreasing} function of $S^0$, see Figures~\ref{fig:PlotQ}-\ref{fig:PlotS_Infty_exact}. 
%%Let us give an idea of how to prove~\eqref{S_infty_exact}. Summing the equations for $S$ and $I$ in~\eqref{ClassicalSIR} give
%%$
%%S'+I'=-rI
%%$
%%and, since $I=-\frac{S'}{qS}$, we obtain
%%$$
%%S'+I'=\frac{r}{q}\frac{S'}{S}.
%%$$
%%Integrating over $t\in(0,+\infty)$ (admitting that $I(+\infty)=0$ and that $S(+\infty)=S^\infty$ exists), we infer $f(S_\infty)=f(S^0)$.
\end{itemize}
%
%
%
%\begin{figure}
%\centering
%\begin{subfigure}[b]{0.5\linewidth}
%\includegraphics[width=\textwidth]{Figures/GrapheQ.png}
%\caption{Graph of $f(s)=s-\frac{r}{q}\ln(s)$ (solid line) for $\frac{r}{q}=\frac{1}{2}$ and graphic construction of the solution of~\eqref{S_infty_exact} (dashed lines) \label{fig:PlotQ}}
%\end{subfigure}
%\hfill
%\begin{subfigure}[b]{0.45\linewidth}
%\includegraphics[width=\textwidth]{Numerics/Figures/SIR_SInfty_Exact/_V_infty_exact_12_10_26.pdf}
%\caption{Numerical computation of $S^\infty$ as a solution of~\eqref{S_infty_exact} in function of $S^0$ for $\frac{r}{q}=\frac{1}{2}$. We see that $S^0\mapsto S^\infty$ is decreasing.
%}\label{fig:PlotS_Infty_exact}
%\end{subfigure}
%\caption{}
%\end{figure}
%

\section{SIR-Bass model on 1D two-sided lattices}\label{sec:1D2S}

We now allow both the left and the right neighbors to spread the infection. We denote by$q_k^{L,n}:=q_{k-1,k}$ and $q_k^{R,n}:=q_{k+1,k}$ the influence of the left and right neighbors on node $k$, respectively, and we assume that
\begin{equation}\label{Structure_Assumption_1D2S}
q_{ik}=0\qquad \text{if }\vert i-k\vert\neq1.
\end{equation}
 Hence,~\ref{Assumption_Generale_1_Intro_BIS} becomes
\begin{subequations}\label{Assumption_1D2S_WithRecovery}
\begin{equation}\label{Assumption_1D2S_WithRecovery_1}
 \tag*{(\ref{Assumption_1D2S_WithRecovery}a)}
\P\left(I_k^{n+1}\big\vert X^n \right)=
\left\{\begin{aligned}
&\dt\left(p_k^n+q_k^{L,n} \mathds{1}_{I_{k-1}^n}+q_k^{R,n} \mathds{1}_{I_{k+1}^n}\right),&&\text{if }x_k^n=s,\\
& 1-r_k^n\dt,&&\text{if }x_k^n=i,\\
& 0 ,&&\text{if }x_k^n=r,
\end{aligned}\right.
\end{equation}
The transition rates~\ref{Assumption_Generale_2_Intro_BIS} from $i$ to $r$ remain unchanged, that is:
\begin{equation}\label{Assumption_1D2S_WithRecovery_2}
 \tag*{(\ref{Assumption_1D2S_WithRecovery}b)}
\P\left(R_k^{n+1}\big\vert X^n \right)=
\left\{\begin{aligned}
&0,&&\text{if }x_k^n=s,\\
& r_k^n\dt,&&\text{if }x_k^n=i,\\
& 1 ,&&\text{if }x_k^n=r,
\end{aligned}\right.
\end{equation}
\end{subequations}

\subsection{Deterministic description}

\begin{theorem}\label{th:1D2SWithP_R}
Assume that the individuals are placed on a 1D two-sided lattice $\mathcal{K}$ {(which is either a finite, semi-infinite, or infinite line)}, see~\eqref{Structure_Assumption_1D2S}, that the dynamics are governed by~\eqref{Assumption_1D2S_WithRecovery}, and that the initial conditions are uncorrelated, see~\eqref{hyp:initial_cond_uncor}. Let $p_k^n$, $q_k^n$ and $r_k^n$ converge as $\dt\to0$ to $p_k(t)$, $q_k(t)$ and $r_k(t)$.
Then for all $k\in\mathcal{K}$,
\begin{subequations}
\begin{equation}\label{EquationDiscrete1D2Sided_1}
{}[S_k](t)=
\left\{\begin{aligned}
&\frac{[S_k^L](t)[S_k^R](t)}{[S_k^0]e^{-\int_0^t p_k(\cdot)}}, &&\text{if }[S_k^0]>0,\\
&0, &&\text{if }[S_k^0]=0,
\end{aligned}\right.
\end{equation}
where $[S_k^R](t)$ (resp. $[S_k^L](t)$) is the probability that node $k$ is susceptible by time $t$ if we discard the influences of the left neighbors by setting $q^{L,n}_k\equiv0$ in~\eqref{Assumption_1D2S_WithRecovery_1} (resp. the influences of the right neighbors by setting $q^{R,n}_k\equiv0$).
In addition, $[R_k(t)]$ is given by
\begin{equation}\label{EquationDiscrete1D2Sided_3}
{}[R_k]'(t)-r_k(t)\big(1-[S_k]-[R_k]\big)=0.
\end{equation}
\end{subequations}
\end{theorem}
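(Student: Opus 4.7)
My strategy is to prove~\eqref{EquationDiscrete1D2Sided_1} by an independence argument based on a graphical (Harris) representation of the dynamics, and to derive~\eqref{EquationDiscrete1D2Sided_3} by the same elementary computation that produced~\eqref{EquationDiscrete1D1SidedWithP_R_Dt_R} in the proof of~\autoref{th:1D1SWithP_R}. For the first step, I would encode the stochastic dynamics by mutually independent Poisson processes: a source process of rate $p_k(t)$ and a recovery process of rate $r_k(t)$ at each node, an edge process of rate $q_{ij}(t)$ on each directed edge, together with the independent initial states $(x_k^0)$. Under this representation the entire trajectory is a deterministic function of the underlying randomness, obtained by scanning the ordered list of firings. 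In particular, the probability space splits into three mutually independent sub-$\sigma$-algebras: $\mathcal{F}_0$ generated by $x_k^0$ and the source process at $k$; $\mathcal{F}_L$ generated by the initial states and by the source, recovery and edge processes attached to nodes $\{j<k\}$ and to the edge $(k-1,k)$; and $\mathcal{F}_R$ defined symmetrically on the right.

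The core of the argument is a pathwise identification of the infection time of node $k$ in the 2-sided system with a minimum of three independent random times. Let $\tau_k^p$ be the first source firing at $k$ (an $\mathcal{F}_0$-measurable exponential time of rate $p_k$); let $\tau_k^L$ be the first firing time of the edge $(k-1,k)$ at which $k-1$ is infected in the \emph{truncated left subsystem}, where the 2-sided dynamics are run only on nodes $\{j \le k-1\}$ (and $k$ is simply removed from the graph); and let $\tau_k^R$ be its right-side mirror. Then $\tau_k^L$ is $\mathcal{F}_L$-measurable, $\tau_k^R$ is $\mathcal{F}_R$-measurable, and these three times are mutually independent. I claim that the actual infection time $\sigma_k$ of $k$ in the full 2-sided system satisfies, on the event $\{x_k^0=s\}$, the identity $\sigma_k = \min(\tau_k^p,\tau_k^L,\tau_k^R)$.

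The key ingredient is the following \emph{decoupling lemma}: as long as $k$ has remained susceptible on an interval $[0,\tau]$, node $k$ has not transmitted infection to any neighbor, so the states of nodes $\{j<k\}$ on $[0,\tau]$ in the 2-sided system coincide with those of the truncated left subsystem (and symmetrically on the right). This is where the 1D structure plays a crucial role: $k$ is the unique vertex connecting the two halves of the lattice, so removing it truly decouples them. From the decoupling lemma, both inclusions $\sigma_k \le \min(\tau_k^p,\tau_k^L,\tau_k^R)$ and $\sigma_k \ge \min(\tau_k^p,\tau_k^L,\tau_k^R)$ are straightforward: in each direction one simply checks which of the three candidate events triggers the infection of $k$ in the corresponding system. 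The same argument applied to the $L$-modified system (where $q_k^R\equiv0$) and to the $R$-modified system yields $\sigma_k^L = \min(\tau_k^p,\tau_k^L)$ and $\sigma_k^R = \min(\tau_k^p,\tau_k^R)$. Writing $[S_k](t)=\P(x_k^0=s,\ \sigma_k>t)$ and invoking the mutual independence of $\mathcal{F}_0,\mathcal{F}_L,\mathcal{F}_R$, one gets a product formula that factors through $[S_k^L][S_k^R]/\bigl([S_k^0]\,e^{-\int_0^t p_k}\bigr)$, which is~\eqref{EquationDiscrete1D2Sided_1}. The case $[S_k^0]=0$ is trivial since $k$ is then never susceptible.

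Finally, \eqref{EquationDiscrete1D2Sided_3} is obtained, as in the proof of~\autoref{th:1D1SWithP_R}, by starting from $\P(R_k^{n+1})-\P(R_k^n) = r_k^n\,\dt\,\P(I_k^n)$, dividing by $\dt$, substituting $[I_k]=1-[S_k]-[R_k]$, and letting $\dt\to0$. I expect the main obstacle to be the rigorous justification of the decoupling lemma, since it requires comparing two different stochastic processes on a random time interval that is itself defined in terms of the dynamics; the cleanest route is a pathwise induction on the ordered firing times of the relevant Poisson processes, which works precisely because in 1D the only communication channel between the two halves of the lattice must pass through $k$.
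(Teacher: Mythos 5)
Your proof is correct, but it follows a genuinely different route from the paper's. The paper works at the level of probabilities: it first establishes the identities $[S_{k-1}\cap S_k]=[S_{k-1}^L][S_k^R]$ and $[S_{k-1}\cap S_k\cap S_{k+1}]=[S_{k-1}^L][S_{k+1}^R][S_k^0]e^{-\int_0^t p}$ (by observing that these joint events are insensitive to switching off the edges incident to the nodes involved, after which the factors become independent), then proves a spatial Markov property $x_{k-1}^n\independent x_{k+1}^n\vert S_k^n$ by induction on the discrete time step, and finally combines the two to solve for $[S_k]$. You instead work pathwise: the graphical representation lets you write the infection time of $k$ as $\sigma_k=\min(\tau_k^p,\tau_k^L,\tau_k^R)$ with the three times measurable with respect to mutually independent sub-$\sigma$-algebras, and the product formula drops out immediately, with the denominator $[S_k^0]e^{-\int_0^t p_k}$ appearing transparently as the doubly-counted contribution of $x_k^0$ and $\tau_k^p$ common to both one-sided systems. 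Both arguments hinge on the same two structural facts — a susceptible node transmits nothing, and in 1D node $k$ is the only channel between the two half-lattices — so your decoupling lemma is the pathwise counterpart of the paper's ``indifference principle'' identities, and your min-decomposition plays the role of its spatial Markov lemma. Your version is stronger (it factorizes the infection time itself, not merely the survival probability) and avoids the discrete-time induction, at the cost of having to construct the Harris coupling rigorously and to carry out the pathwise comparison on the random interval $[0,\sigma_k]$; one small bookkeeping point worth making explicit is that the directed edge processes $k\to k\pm1$ belong to neither $\mathcal{F}_L$ nor $\mathcal{F}_R$ as you defined them, but they are irrelevant before $\sigma_k$ since $k$ is then susceptible, so this does not affect the independence argument. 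The derivation of~\eqref{EquationDiscrete1D2Sided_3} is identical to the paper's.
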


\begin{proof}
Let us first introduce some notations. We denote by $[S_k\vert q_k^L=0]$ the probability of the event $\tilde S_k(t):=\{\tilde x_k(t)=s\}$ where $\tilde x_k$ is the process defined identically as $x_k$ but in which we put $q_k^L\equiv0$ in~\eqref{Assumption_1D2S_WithRecovery_1}. We use the name notation if $S_k$ is replaced by any event or intersection of events, and if $q_k^{L,n}=0$ is replaced by other conditions on the parameters. For example, with these notations, we have 
 $\left[S_k^R\right]= \left[S_k\vert (q_k^L)_{k\in\mathcal{K}}\equiv0\right]$.

First, notice that
\begin{align*}
\left[S_{k-1}\cap S_k\right]
=\left[S_{k-1}\cap S_k\vert q_{k-1}^R=q_k^L=0 \right]
&=\left[S_{k-1}\vert q_{k-1}^R=q_k^L=0 \right]  \left[S_k\vert q_{k-1}^R=q_k^L=0 \right]\\
&=\left[S_{k-1}\vert q_{k-1}^R=0 \right]  \left[S_k\vert q_k^L=0 \right]\\
&=\left[S_{k-1}\vert (q_k^R)_{k\in\mathcal{K}}\equiv 0 \right]  \left[S_k\vert (q_k^L)_{k\in\mathcal{K}}\equiv 0\right].
\end{align*}
We deduce that
\begin{equation}\label{Proof_2S_Identity1}
{}\left[S_{k-1}\cap S_k\right]=\left[S_{k-1}^L\right]\left[S_k^R\right].
\end{equation}
%\textit{(there is a technical detail to fix fir the definition of the probabilities of the type $ [S_{k-1}^L\cap S_k^R]$. Namely, in which measurable space does both  $S_{k-1}^L$ $S_k^R$ live so that we can compute the probability of the intersection? In fact, in the discrete setting, this can be done by identifying (canonically) the process and the family of independent random variables $(\tau_k^n)$ drew uniformly in $(0,1)$ which determine all the transitions)}
Similarily, we have
\begin{align*}
{}[S_{k-1}\cap S_k\cap S_{k+1}]
&=\left[S_{k-1}\cap S_k\cap S_{k+1}\vert q_{k-1}^R=q_k^L=q_k^R=q_{k+1}^L=0\right ]\\
%&=\left[S_{k-1}\vert q_{k-1}^R=q_k^L=q_k^R=q_{k+1}^L=0\right ]
%\left[S_k\vert q_{k-1}^R=q_k^L=q_k^R=q_{k+1}^L=0\right ]
%\left[S_{k+1}\vert q_{k-1}^R=q_k^L=q_k^R=q_{k+1}^L=0\right ]\\
&=\left[S_{k-1}\vert q_{k-1}^R=0\right ]
\left[S_k\vert q_k^L=q_k^R=0\right ]
\left[S_{k+1}\vert q_{k+1}^L=0\right ]\\ 
&= \left[S_{k-1}\vert (q_k^R)_{k\in\mathcal{K}}\equiv 0\right ]
[S_k^0]e^{-\int_0^t p}
\left[S_{k+1}\vert (q_k^L)_{k\in\mathcal{K}}\equiv 0\right ],
\end{align*}
and so
\begin{equation}\label{Proof_2S_Identity2}
{}[S_{k-1}\cap S_k\cap S_{k+1}] =\left[S_{k-1}^L\right ]
\left[S_{k+1}^R\right ][S_k^0]e^{-\int_0^t p}.
\end{equation}
Relations \eqref{Proof_2S_Identity1} and \eqref{Proof_2S_Identity2} can also be proved using the \emph{indifference principle} of~\cite{Fibich2019}.

The last ingredient in our proof is a \emph{spatial Markovian property} establishing that the state of nodes $k-1$ and $k+1$ are independent if conditioned with respect to $S_k$. This is where the assumption on the graph (namely that it is 1D with no cylces) comes into play.
\begin{lemma}\label{rmk:Markov_1D2S_Proof}
%We have that
\begin{equation}
x_{k-1}^n\independent x_{k+1}^n\vert S_k^n.
\end{equation}
\end{lemma}
This standard result is proved separately in a slightly more general form, see \autoref{rmk:Markov_1D2S} in appendix~\ref{sec:Appendix_SpatialMarkov}. \autoref{rmk:Markov_1D2S_Proof} can also be deduced from the funnel node theorem~\cite{Fibich2021}.

Using \autoref{rmk:Markov_1D2S_Proof}, we have that
\begin{equation}
{}[S_{k-1}\cap S_k\cap S_{k+1}]= [S_{k-1}\cap S_{k+1}\vert  S_k][S_k]=[S_{k-1}\vert  S_k][S_{k+1}\vert  S_k][S_k]=\frac{[S_{k-1}\cap   S_k][ S_k\cap S_{k+1}]}{[S_k]}.
\end{equation}
Substituting~\eqref{Proof_2S_Identity1} and~\eqref{Proof_2S_Identity2} in the right and left hand-side of the above equation yields equation~\eqref{EquationDiscrete1D2Sided_1} on $[S_k]$.

Equation~\eqref{EquationDiscrete1D2Sided_3} on $[R_k]$ is obtained similarily to equation~\eqref{EquationDiscrete1D1SidedWithP_R_Dt_R} in \autoref{th:1D1SWithP_R}.
\end{proof}

\begin{remark}
%\note{We should probably erase this remark}
In the case without recovery, $([S_k])_{k\in\mathcal{K}}$ and $([S_k\cap S_{k-1}])_{k\in\mathcal{K}}$ satisfy a closed system of ODEs. More precisely, under the same conditions as in~\autoref{th:1D2SWithP_R} and if we further assume that there is no recovery~\eqref{1D1S_Assumption_NoRecovery}, then
\begin{subequations}\label{ClosedSystem1D2S}
\begin{equation}
[S_k]'(t)
+\big(p_k+q_k^R+q_k^L\big)[S_k] 
 -e^{-\int_0^tp_k}[S_k^0]\big(q_k^R[S_k\cap S_{k+1}]
+q_k^L[S_k\cap S_{k-1}]\big)
=0,
\end{equation}
and 
\begin{equation}
\begin{aligned}
[S_k\cap S_{k-1}]'(t)
&= -(p_k+p_{k-1}+q_{k-1}^L+q_k^R)[S_k\cap S_{k-1}]\\
&\qquad+q_k^R\frac{[S_{k-1}\cap S_k][S_k\cap S_{k+1}]}{[S_k]}+q_{k-1}^L \frac{[S_{k-2}\cap S_{k-1}][S_{k-1}\cap S_k]}{[S_{k-1}]}.
\end{aligned}
\end{equation}
\end{subequations}
To prove this, first recall that from \autoref{th:1D1SWithP_R} we have
\begin{gather}
{}[S_k^R]'+\big(p_k+q_k^R+r_k\big)[S_k^R] - e^{-\int_0^tp_k}[S_k^0]\big(q_k^R[S_{k+1}^R]+q_k^R[R_{k+1}^0]+r_k\big)=0,\\
{}[S_k^L]'+\big(p_k+q_k^L+r_k\big)[S_k^L] - e^{-\int_0^tp_k}[S_k^0]\big(q_k^L[S_{k-1}^L]+q_k^L[R_{k-1}^0]+r_k\big)=0\label{EquationSR1}
\end{gather}
Then, we derive~\eqref{ClosedSystem1D2S} from the identities~\eqref{EquationDiscrete1D2Sided_1} and~\eqref{Proof_2S_Identity1}. Note also that
\begin{equation}
\frac{[S_{k-1}\cap S_k][S_k\cap S_{k+1}]}{[S_k]}= e^{-\int_0^t p_k}[S_k^0][S_{k-1}^L][S_{k+1}^R].
\end{equation}
\end{remark}

\subsection{Explicit solutions}

Using \autoref{th:1D2SWithP_R}, the results of the previous sections dealing with the spatially homogeneous case, the patient zero problem, and time-varying point sources can be adapted to the case of a two-sided 1D lattice. We now briefly present the obtained explicit formula. For simplicity, we focus on the Bass model, i.e., we assume that there is no recovery.

%
%
%1D two-sided lattice $\mathcal{K}$ which is not a circle, see~\eqref{Structure_Assumption_1D2S}, that the dynamics are governed by~\eqref{Assumption_1D2S_WithRecovery}, and that the initial conditions are uncorrelated, see~\eqref{hyp:initial_cond_uncor}. Let $p_k^n$, $q_k^n$ and $r_k^n$ converge as $\dt\to0$ to $p_k(t)$, $q_k(t)$ and $r_k(t)$.
%Then for all $k\in\mathcal{K}$,

\begin{corollary}[Spatially-homogeneous two-sided Bass solution]\label{rmk:1D2S_NonSpatialSolution}
Assume the conditions of \autoref{th:1D2SWithP_R}. If the initial condition $[S_k^0]\equiv [S^0]$ and the parameters $p_k(t)\equiv p(t),q^R_k(t)\equiv q^R(t), q^L_k(t)\equiv q^L(t)$ do not depend on space $k$, then $[S_k](t)=[S](t)$ does not depend on $k$, and is given by
\begin{equation}\label{ExplicitU_Hetero_T_2S}
{}[S](t)=[S^0]e^{-\int_0^t \left(p(s)+q^R(s)+q^L(s)\right)ds+[S^0]\int_0^t(q^R(s)+q^L(s))e^{-\int_0^s p}ds }.
\end{equation}
\end{corollary}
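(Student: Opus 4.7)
The plan is to reduce the two-sided case to the one-sided case already handled in \autoref{rmk:1D1S_NonSpatialSolution}, by leveraging the product formula~\eqref{EquationDiscrete1D2Sided_1} from \autoref{th:1D2SWithP_R}. Under the spatial homogeneity assumption, everything is translation-invariant, so $[S_k](t)$, $[S_k^L](t)$, and $[S_k^R](t)$ are all independent of $k$; call them $[S](t)$, $[S^L](t)$, $[S^R](t)$. Provided $[S^0] > 0$, formula~\eqref{EquationDiscrete1D2Sided_1} gives
\begin{equation*}
[S](t) = \frac{[S^L](t)\,[S^R](t)}{[S^0]\,e^{-\int_0^t p(s)\,ds}}.
\end{equation*}

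Next, I would observe that $[S^L]$ (resp. $[S^R]$) is by definition the probability of being susceptible in the one-sided Bass model obtained by suppressing the right (resp. left) contagion rates. Both reduced problems satisfy the hypotheses of \autoref{rmk:1D1S_NonSpatialSolution} with contagion rate $q^L(t)$ and $q^R(t)$ respectively, the source rate $p(t)$ and initial condition $[S^0]$ being unchanged. Applying~\eqref{ExplicitU_Hetero_T} twice therefore gives
\begin{align*}
[S^L](t) &= [S^0]\,e^{-\int_0^t(p(s)+q^L(s))\,ds + [S^0]\int_0^t q^L(s)\,e^{-\int_0^s p}\,ds}, \\
[S^R](t) &= [S^0]\,e^{-\int_0^t(p(s)+q^R(s))\,ds + [S^0]\int_0^t q^R(s)\,e^{-\int_0^s p}\,ds}.
\end{align*}

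Finally, I would multiply these two expressions and divide by $[S^0]\,e^{-\int_0^t p}$ as prescribed by the product formula; the factors $[S^0]^2 / [S^0] = [S^0]$ and $e^{-2\int_0^t p}/e^{-\int_0^t p} = e^{-\int_0^t p}$ combine with the $e^{-\int_0^t(q^L+q^R)}$ and the sum of the two inner exponential integrals to yield exactly~\eqref{ExplicitU_Hetero_T_2S}. The degenerate case $[S^0]=0$ is trivial since then $[S_k](t) \equiv 0$ by~\eqref{EquationDiscrete1D2Sided_1}, and the stated formula also vanishes.

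The argument is almost entirely mechanical once the product formula of \autoref{th:1D2SWithP_R} is invoked; the only potential subtlety is verifying that suppressing $q^L$ (or $q^R$) in the two-sided dynamics does yield precisely the one-sided dynamics to which \autoref{rmk:1D1S_NonSpatialSolution} applies, but this is immediate from comparing~\ref{Assumption_1D2S_WithRecovery_1} with~\ref{Assumption_1D1S_WithRecovery_1_Intro}. There is no real obstacle here; the corollary is essentially a bookkeeping exercise verifying that the two one-sided homogeneous formulae combine cleanly through the product identity.
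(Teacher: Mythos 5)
Your proof is correct and takes essentially the same route as the paper's, which simply invokes the product formula~\eqref{EquationDiscrete1D2Sided_1} of \autoref{th:1D2SWithP_R} together with the one-sided homogeneous solution~\eqref{ExplicitU_Hetero_T}. Your version merely spells out the bookkeeping (and additionally records the degenerate case $[S^0]=0$, which the paper leaves implicit).
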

\begin{proof}
The result is proved by a direct application of \autoref{th:1D2SWithP_R} and~\eqref{ExplicitU_Hetero_T}.
\end{proof}
Note that expression \eqref{ExplicitU_Hetero_T_2S} coincides with~\eqref{ExplicitU_Hetero_T} in the one-sided case by setting $q(t):=q^R(t)+q^L(t)$.

We now turn to the patient-zero problem.
\begin{corollary}[Two-sided patient-zero Bass problem]\label{cor:2D_patient-zero}
Assume the conditions of \autoref{th:1D2SWithP_R}, let $r_k(t)\equiv0$, $p_k(t)\equiv p$, $q_k^R(t)\equiv q^R$, $q_k^L(t)\equiv q^L$, be independent of $k$ and $t$, and let the nodes be placed on a semi-inifinite line $\mathcal{K}=\mathbb{N}$. Assume that, initially, we place a patient zero at $k=0$, i.e.,
\begin{equation*}
[I_0^0]=1,\qquad [S_k^0]=1,\quad k=1,2,\dots
 \end{equation*} 
 Then, $[S_k]$ is given by
 \begin{equation*}
 {}[S_k](t)=e^{-(p+q^R+q^L)t+q^R\frac{1-e^{pt}}{p}}
\sum\limits_{l=0}^{k-1} \frac{\left(q^L \frac{1-e^{-pt}}{p}\right)^l}{l!},\qquad \forall k=1,2,\dots
 \end{equation*}
\end{corollary}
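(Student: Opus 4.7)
The plan is to apply \autoref{th:1D2SWithP_R} to reduce the two-sided patient-zero problem to two one-sided problems, each of which has already been solved in \autoref{th:coro_patient_zero} or \autoref{rmk:1D1S_NonSpatialSolution}. For $k\geq 1$ we have $[S_k^0]=1$, so \eqref{EquationDiscrete1D2Sided_1} gives
\begin{equation*}
[S_k](t)=e^{pt}\,[S_k^L](t)\,[S_k^R](t),
\end{equation*}
and it suffices to compute $[S_k^L]$ and $[S_k^R]$ separately.

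For $[S_k^L](t)$, setting $q_k^{R}\equiv 0$ yields a one-sided (left-to-right) Bass dynamics on $\mathbb{N}$ with patient zero at $k=0$ and parameter $q^L$. This is exactly the setting of \autoref{th:coro_patient_zero}, so I would directly apply that proposition to obtain
\begin{equation*}
[S_k^L](t)=e^{-(p+q^L)t}\sum_{l=0}^{k-1}\frac{\bigl(q^L\tfrac{1-e^{-pt}}{p}\bigr)^l}{l!},\qquad k\geq 1.
\end{equation*}

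For $[S_k^R](t)$, setting $q_k^{L}\equiv 0$ yields a right-to-left Bass dynamics. Here comes the step that I expect to be the most delicate one and which is the main conceptual point: since contagion only flows from right to left, the evolution of the sub-lattice $\{1,2,\dots\}$ is \emph{autonomous} and in particular does not depend on the state of node $0$. Consequently, for $k\geq 1$, $[S_k^R](t)$ coincides with the solution of a spatially-homogeneous (right-to-left) Bass problem on $\{1,2,\dots\}$ with initial condition $[S^0]\equiv 1$ and contagion rate $q^R$. Applying \autoref{rmk:1D1S_NonSpatialSolution} (after the trivial relabelling $k\mapsto -k$ that turns right-to-left into left-to-right) gives
\begin{equation*}
[S_k^R](t)=e^{-(p+q^R)t+q^R\tfrac{1-e^{-pt}}{p}},\qquad k\geq 1.
\end{equation*}

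Substituting the two expressions into $[S_k](t)=e^{pt}[S_k^L](t)[S_k^R](t)$ and simplifying the exponentials produces the claimed formula. The only non-routine ingredient is the autonomy observation for $[S_k^R]$; everything else is a direct citation of earlier results followed by an algebraic combination.
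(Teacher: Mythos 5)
Your proposal is correct and follows essentially the same route as the paper: apply \autoref{th:1D2SWithP_R} to factor $[S_k]$ into $[S_k^L]$ (the one-sided patient-zero solution \eqref{Explicit_withP_Uk(t)} with $q=q^L$) and $[S_k^R]$ (the one-sided homogeneous solution \eqref{ExplicitU_Hetero_T} with $q=q^R$), then multiply. The only difference is that you make explicit the autonomy argument for why $[S_k^R]$ reduces to the homogeneous case, which the paper leaves implicit; this is a welcome clarification, not a deviation.
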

\begin{proof}
Note that $[S_k^R]$ is given by the solution~\eqref{ExplicitU_Hetero_T} of the one-sided Bass homogeneous problem with $q=q^R$, and that $[S_k^L]$ is given by the solution~\eqref{Explicit_withP_Uk(t)} of the one-sided Bass patient-zero problem with $q=q^L$. We conclude by using~\eqref{EquationDiscrete1D2Sided_1}.
\end{proof}
%Under the same condition as in \autoref{cor:2D_patient-zero} we also derive from~\eqref{Proof_2S_Identity1} and~\eqref{EquationDiscrete1D2Sided_1} that
% \begin{equation}
% [S_{k-1}\vert S_k]=\frac{ [S_{k-1}\cap S_k]}{ [S_k]}
% = \frac{ [S_{k-1}^L]}{ [S_k^L]}e^{-pt}
% =\frac{\sum\limits_{l=0}^{k-2} \frac{\left(q^L \frac{1-e^{-pt}}{p}\right)^l}{l!}}{\sum\limits_{l=0}^{k-1} \frac{\left(q^L \frac{1-e^{-pt}}{p}\right)^l}{l!}}e^{-pt},\qquad \forall k=1,2\dots
% \end{equation}
%As $p\to0$, we obtain
%  \begin{equation}
% [S_{k-1}\vert S_k]
% =\frac{\sum\limits_{l=0}^{k-1}\frac{(-q^Lt)^l}{l!}}{\sum\limits_{l=0}^{k-1}\frac{(-q^Lt)^l}{l!}},\qquad \forall k=1,2\dots
% \end{equation}
%For any fixed $k$, we have that $[S_{k-1}\vert S_k]$ converges to $0$ as $t\to+\infty$. This result is somehow counter-intuitive: if some node $x_k$ remains susceptible for large time $t$ (which is a rare event), then $x_{k-1}(t)$ has nevertheless been infected with probability~$1$.
%As $k\to+\infty$, we have that $[S_{k-1}\vert S_k](t)$ converges to $1$, which is the solution $[S_{k-1}\vert S_k](t)\equiv1$ of~\eqref{EquationSR1} in the case of zero initial conditions.
%This expresses the fact that the effect of the intially infected node at $k=0$ vanishes as we move away from the origin.
%
%

We now present the solution in the case of a time-varying point source.
\begin{corollary}[Time-varying point source]\label{th:SourceTerm_2S}
Assume the conditions of \autoref{th:1D2SWithP_R}, that the individuals are placed on a semi infinite line $\mathcal{K}=\{0,1,2,\dots\}$ and that all individuals are initially susceptible, i.e., $x_k^0=s$ for $k\in\mathcal{K}$. Assume 
that $q_k(t)\equiv q>0$ is constant, that $p_k(t)\equiv0$ for all $k\geq1$, and allow a single time-varying source term $p_0(t)\geq0$ at $k=0$. Then
\begin{equation}
[S_k](t)\equiv [S_k^L](t),
\end{equation}
where $[S_k^L]$ is given by~\eqref{SourceTermExplicit}.
\end{corollary}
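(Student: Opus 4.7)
The plan is to apply \autoref{th:1D2SWithP_R}, which asserts the factorization $[S_k](t)=[S_k^L](t)[S_k^R](t)/\bigl([S_k^0]\,e^{-\int_0^t p_k}\bigr)$, and then to show that under the present hypotheses the $R$-factor degenerates. Since every node is initially susceptible, $[S_k^0]=1$ for all $k$, and since $p_k\equiv 0$ for $k\geq 1$, the factorization reduces to
\[
[S_k](t)=[S_k^L](t)[S_k^R](t)\quad(k\geq 1),\qquad [S_0](t)=\frac{[S_0^L](t)[S_0^R](t)}{e^{-\int_0^t p_0}}.
\]

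The key step is to compute $[S_k^R](t)$. By definition, this is the susceptible probability in the auxiliary one-sided process obtained by setting $q_k^L\equiv 0$ everywhere, so that each node $k$ can only become infected by its right neighbor $k+1$ or by its own source. Since $p_k\equiv 0$ for $k\geq 1$ and all nodes are initially susceptible, a direct induction (or equivalently \autoref{th:1D1SWithP} applied to the process after relabeling $k\mapsto -k$) gives $[S_k^R]\equiv 1$ for all $k\geq 1$. At the boundary $k=0$, node $0$ can only be infected by its own source (since node $1$ is never infected in the $R$-process), hence $[S_0^R](t)=e^{-\int_0^t p_0}$. Plugging these values back into the factorization yields $[S_k]=[S_k^L]$ for $k\geq 1$, while for $k=0$ the two factors of $e^{-\int_0^t p_0}$ in the numerator cancel against the denominator to give the same identity $[S_0]=[S_0^L]$. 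Finally, $[S_k^L]$ solves the one-sided left-to-right Bass problem with time-varying point source at the boundary, which is precisely \autoref{th:SourceTerm}, so $[S_k^L]$ is given by formula~\eqref{SourceTermExplicit}.

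The only subtlety I anticipate is the rigorous justification that $[S_k^R]\equiv 1$ on the full semi-infinite lattice: one must rule out any ``infection from infinity'' in the right-to-left channel. This can be handled either by invoking the finite-speed-of-propagation estimate of \autoref{th:SpeedPropagation1D1S} applied to the reversed process, or by an elementary truncation/limit argument on finite segments $\{0,1,\dots,K\}$, on which the ODEs for $[S_k^R]$ decouple trivially under zero source and all-susceptible initial conditions, and then letting $K\to\infty$. No other difficulty is expected.
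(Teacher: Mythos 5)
Your proof is correct and follows essentially the same route as the paper's: apply the factorization~\eqref{EquationDiscrete1D2Sided_1}, observe that the right-influence process never infects anyone for $k\geq1$ so $[S_k^R]\equiv1$, and identify $[S_k^L]$ with the one-sided solution~\eqref{SourceTermExplicit}. Your explicit treatment of the boundary node $k=0$ (where $[S_0^R]=e^{-\int_0^t p_0}$ cancels the denominator) is in fact slightly more careful than the paper's one-line argument, which glosses over this case.
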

\begin{proof}
The proof follows from~\eqref{EquationDiscrete1D2Sided_1}, the fact that $[S_k^R]\equiv1$ and that $[S_k^L]$ is given by~\eqref{SourceTermExplicit}.
\end{proof}

\section{Space-continuous limits}\label{sec:1D1S_SpaceContinuous}

In this section, we discuss the space-continuous models that can be obtained after a space rescaling in our space-discrete process. Since we have shown in \autoref{th:1D2SWithP_R} that 1D two-sided lattices reduce to 1D one-sided lattices, we focus on the latter case to make the presentation clearer. 
%{ We also assume, for the sake of simplicity, that the ratio $\frac{r_k}{q_k}$ does not depend on time, so that equation~\eqref{EquationDiscrete1D1SidedWithP_R_Dt} on $[S_k]$ reduces to~\eqref{EquationDiscrete1D1SidedWithP_R_Dt_NOTIME}.

% section to 1D one-sided lattices and let the interested reader adapt our discussion to 1D two-sided lattices.

Equation~\eqref{EquationDiscrete1D1SidedWithP_R_Dt} can be rewritten as
%\begin{equation}
%{}[S_k]'+ q_k e^{-\int_0^tp_k}[S_k^0]\big([S_{k}]-[S_{k-1}]\big)+\Big(p_k+q_k\left(1- e^{-\int_0^tp_k}[S_k^0]\right)+r_k\Big)[S_k] = e^{-\int_0^tp_k}[S_k^0]\big(q_k[R_{k-1}^0]+r_k\big).
%\end{equation}
{
\begin{equation}\label{EquationDiscrete1D1SidedWithP_Rewriting}
{}
\begin{aligned}
[S_k]'
&+q_ke^{-\int_0^tp_k}[S_k^0]\left([S_k]-[S_{k-1}]\right)+\left(p_k+q_k\left(1-e^{-\int_0^tp_k}[S_k^0]\right)+r_k\right)[S_k]
\\
&=q_k e^{-\int_0^tp_k}[S_k^0]\left([R_{k-1}^0]+\frac{r_k(0)}{q_k(0)}+\int_0^t \left(\frac{r_k}{q_k}\right)'(s)\frac{[S_k](s)}{[S_k^0]}ds\right).
\end{aligned}
\end{equation}
}
Since the term $[S_k]-[S_{k-1}]$ is a discrete spatial derivative, this suggests that we can derive a space-continuous PDE by a proper space rescaling.
To do that, assume that the individuals are placed on the inifinite line $\mathcal{K}=\mathbb{Z}$,
fix $0<\Delta x\ll1,$ and proceed to the space-rescaling
\begin{equation*}
[{S}](t,x):= \left[S_{k=\lfloor\frac{x}{\Delta x}\rfloor}\right](t), \qquad \forall x\in\Delta x\ \mathbb{Z}.
\end{equation*}
%With a slight abuse of notation, we may consider that the space variable is a continuous variable $x\in\R$. Formally, it amounts to define $S(t,x)=S\left(t,\lfloor\frac{x}{\dx}\rfloor\right)$.
We can similarily define $p(t,x)=p_{k=\lfloor\frac{x}{\dx}\rfloor}(t)$, $q(t,x)=q_{k=\lfloor\frac{x}{\dx}\rfloor}(t)$, $r(t,x)=r_{k=\lfloor\frac{x}{\dx}\rfloor}(t)$, $[S^0](x)=\left[S^0_{k=\frac{x}{\Delta x}}\right]$, $[R^0](x)=\left[R^0_{k=\frac{x}{\Delta x}}\right]$.

The limit of~\eqref{EquationDiscrete1D1SidedWithP_Rewriting} as $\dx\to0$ depends on some additional assumptions on how the parameters rescale. In the present paper, we shall only discuss the formal passage to the limit and leave the rigorous proof to future works.
%and assume that they converge smoothly to some limiting functions as $\dx\to0$.

%In this section, we assume that the parameters $p_k(t)$, $q_k(t)$, $r_k(t)$ and the initial conditions $[S^0_k]$, $[R^0_k]$ are defined for the variable $x\in\R$. We allow them to depend on the rescaling parameter $\dx$ but assume that they converge smoothly to some limiting functions as $\dx\to0$.

\subsection{Limiting {integrodifferential} ODE}\label{sec:limitingODE}
First, let us make the following assumptions:
\\
\textit{Rescaling Assumption 1.}
the parameters $p_k(t)$, $q_k(t)$, $r_k(t)$ and the initial conditions $[S^0_k]$, $[R^0_k]$ converge as $\dx\to0$ { locally uniformly in $C^1$} in time and space to some smooth functions $p(t,x)$, $ q(t,x)>0$, $r(t,x)$, $[S^0(x)]$, and $[R^0(x)]$, respectively.

Under these assumptions, we have that $[S_k]-[S_{k-1}]=O(\dx)$ and so
%$S(t,x)$ is continuous, locally uniformly in time and space. In particular, we have that
\begin{equation}\label{Locally_Constant}
{}[ S](t,x)-[ S](t,x-\Delta x)=O(\dx).
\end{equation}
Hence, $S(t,x;\dx)$ converges to the solution of the {(integrodifferential)} ODE
\begin{equation}\label{EquationDiscrete1D1Sided_LimitingODE}
\left\{\begin{aligned}
&\D_t [{S}](t,x)+\Big[p(t,x)+q(t,x)\big(1-[ S^0](x) e^{-\int_0^t p(\cdot,x)}\big)+r(t,x)\Big][ S]
\\
&\qquad\qquad\qquad\qquad =q(t,x) e^{-\int_0^tp(\cdot,x)}[S^0](x)
{
\left([R_{k-1}^0]+\frac{r(0,x)}{q(0,x)}+\int_0^t \D_t\left[\frac{r}{q}\right](s,x)\frac{[S](s,x)}{[S^0](x)}ds\right)
},\\
&S(0,x)=[S^0(x)].
\end{aligned}\right.\end{equation}
%We deduce that the limit does not depend on the extraction $\dx\to0$. 

In particular, if $p$, $q$, and $r$ do not depend on time (but are allowed to depend on space), we can integrate the above ODE and get the explicit expression
%\begin{equation*}
%{}[S](t,x)=[S^0](t,x)e^{-\int_0^t \left(p(s,x)+q(s,x)\left(1-[S^0(x)]e^{-\int_0^s p(\cdot,x)}\right)\right)ds }.
%\end{equation*}
\begin{equation}\label{SolutionLimitingODE_simple}
{}[S](t,x) =[S^0](x) e^{-(p(x)+q(x)+r(x))t + [S^0](x)q \frac{1-e^{-pt}}{p}}
\left(
1
+
(r+q[R^0](x))
\int_0^t e^{(q+r)\tau - [S^0]q \frac{1-e^{-p\tau }}{p}}d\tau
\right).
\end{equation}
Note that the solution~\eqref{SolutionLimitingODE_simple} is equal pointwise to the spatially-uniform solution~\eqref{Expression_S_Uniform_SIR}. Intuitively, this is because under Rescaling Assumptions 1, the spatial heterogeneity becomes locally uniform as $\dx\to0$ and there are infinitely many nodes between two distinct points $x,x'\in\R$. Therefore, the solution can be computed at each point as a spatially constant solution. 
%If the coefficients depend on time, the solution of \eqref{EquationDiscrete1D1Sided_LimitingODE} is obtained by formula~\eqref{Expression_S_Uniform_SIR_timeVarying}.

The solution~\eqref{SolutionLimitingODE_simple} is thus spatially homogeneous in the short spatial scale $O(dx)$, but inhomogeneous on the long spatial scale $x=O(1)$. This multiple-scale property occurs in many situations, e.g., the slowly-varying amplitude approximation in optics.

The convergence of the solution of~\eqref{EquationDiscrete1D1SidedWithP_Rewriting} to the solution of the limiting ODE~\eqref{EquationDiscrete1D1Sided_LimitingODE} as $\dx\to0$ under \textit{Rescaling Assumptions 1} is illustrated in \autoref{fig:SpaceContinuousLimits_ODE}.

\begin{remark}
Solution~\eqref{SolutionLimitingODE_simple} can be derived if the initial condition $S^0,I^0,R^0$ and the parameters $p,q,r$ are only piecewise smooth. It allows us to include discontinuous coefficients (we can then solve the equation in each of the continuous piece separately). This may be interesting for several problems:
\begin{itemize}
\item The Riemann problem
where the initial condition is a step function $[S^0](x)=0$ when $x<0$ and $[S^0](x)=1$ when $x>0$.
\item The gap problem, when $q(x)$ and $p(x)$ are spatially compactly supported or periodic
\item The time-control (or "optimal duration campaign" problem) when $p$ and $q$ {are piecewise constant with respect to time and change value at some time $t_0>0$}.
\end{itemize}
\end{remark}

\begin{figure}
\center
\includegraphics[width=0.6\linewidth]{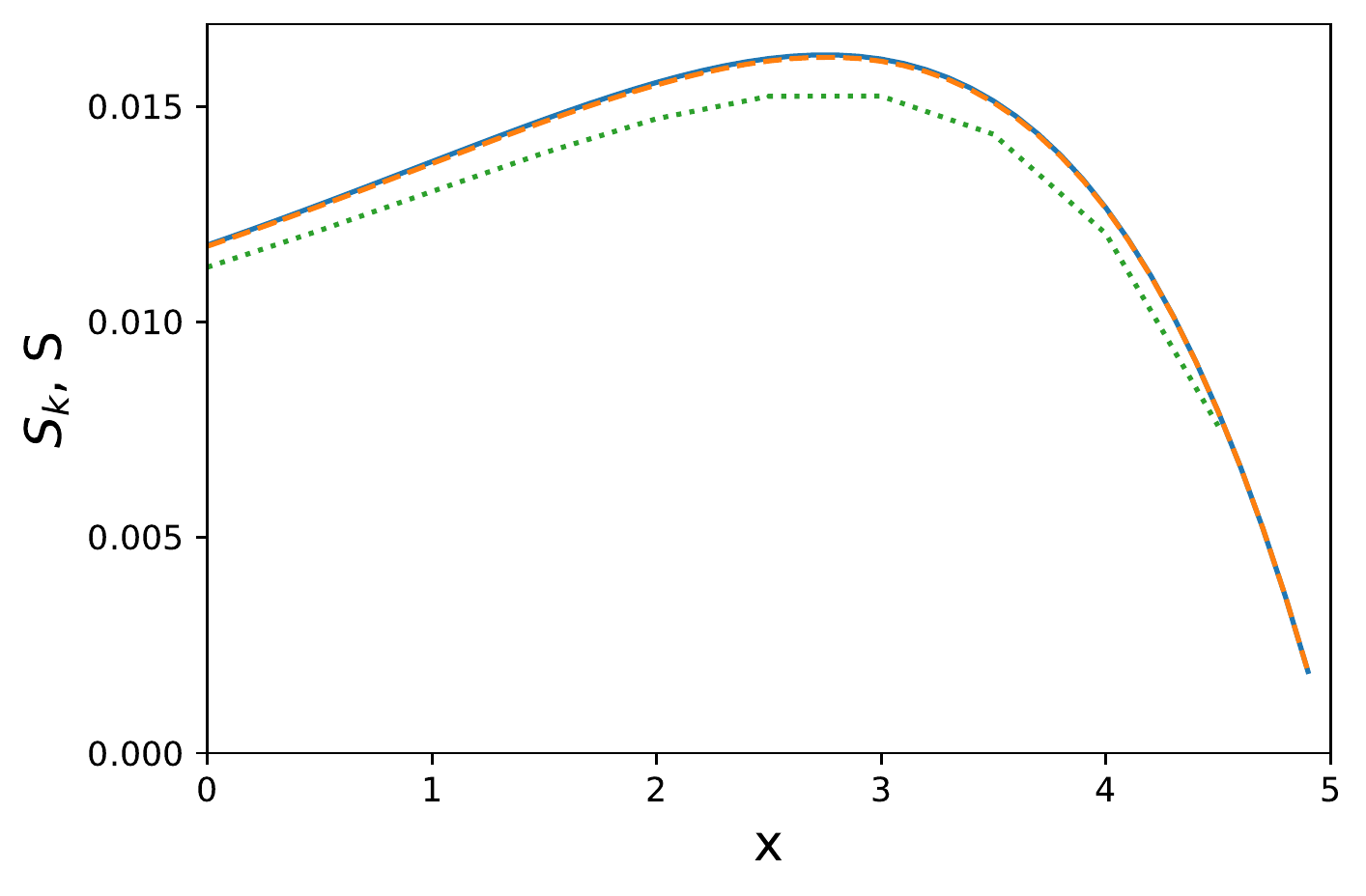}  
  \caption{Convergence to the {(integrodifferential)} ODE limit~\eqref{EquationDiscrete1D1Sided_LimitingODE} under \textit{Rescaling Assumptions 1}. Snapshot at $t=2$ of $[S_k(t)]$ on a segment $k=0,\dots, \lfloor\frac{5}{\dx}\rfloor$ so that $x\in(0,5)$.
Blue solid line represents the explicit solution~\eqref{SolutionLimitingODE_simple} of the limiting ODE~\eqref{EquationDiscrete1D1Sided_LimitingODE} ; Green dotted line and orange dashed line represent the solution $[S_k](t)$ of \eqref{EquationDiscrete1D1SidedWithP_Rewriting} for $\dx=0.5$ and $\dx=0.1$ respectively. Choice of parameters and initial conditions : $ p(x)=1-\frac{x}{5}$, $ q(x)= 5+x$, $ r(x)=2-\frac{2x}{5}$, $[ S^0](x)=0.2-\frac{x}{25}$, $[ R^0](x)=0.2+\frac{3x}{50}$}
 \label{fig:SpaceContinuousLimits_ODE}
\end{figure}

%
%\begin{figure}
%\center
%\begin{subfigure}[b]{0.49\linewidth}
%\includegraphics[width=\linewidth]{Numerics/Figures/Space_Continuous_Limit_ODE/251020/plot_13_49_01.pdf}  
%  \caption{$\dx=0.5$}
%\end{subfigure}
%\begin{subfigure}[b]{0.49\linewidth}
%\includegraphics[width=\linewidth]{Numerics/Figures/Space_Continuous_Limit_ODE/251020/plot_13_48_55.pdf}
%\caption{$\dx=0.1$}
%  \end{subfigure}
%  \caption{Convergence towards ODE limit under \textit{Assumptions 1}. Snapshot at $t=2$ of $[S_k(t)]$ on a segment $k=0,\dots, \lfloor\frac{5}{\dx}\rfloor$ so that $x\in(0,5)$.
%Blue solid line represents the solution of \eqref{EquationDiscrete1D1SidedWithP_Rewriting} ; Orange dashed line represents solution~\eqref{SolutionLimitingODE_simple} of the limiting ODE~\eqref{EquationDiscrete1D1Sided_LimitingODE}. Choice of parameters and initial conditions : $ p(x)=1-\frac{x}{5}$, $ q(x)= 5+x$, $ r(x)=2-\frac{2x}{5}$, $[ S^0](x)=0.2-\frac{x}{25}$, $[ R^0](x)=0.2+\frac{3x}{50}$}
% \label{fig:SpaceContinuousLimits_ODE}
%\end{figure}

\subsection{Limiting PDE.}\label{sec:limitingPDE}
The former rescaling gives a limiting ODE for which space can be viewed as a parameter since there is no derivative with respect to the variable $x$. In other words, there is no spatial coupling in the system as $\dx\to0$, as expressed in~\eqref{Locally_Constant}. Intuitively, this is because the influence of any node on its right neighbors propagates at the speed $c(t,x):=[S^0]qe^{-\int_0^t p}\dx$, see~\eqref{EquationDiscrete1D1SidedWithP_Rewriting}. Thus, $c(t,x)\to0$ as $\dx\to0$ and so the limiting equation is spatially decoupled. 
Therefore, in order to get a limiting equation that includes space derivative, one has to rescale $q$ by a factor of $1/\dx$, namely, to assume that $q=\frac{\tilde{q}}{\dx}$ for a function $\tilde q$ independent of $\dx$ (or which converges locally uniformly as $\dx\to0$). This way, the term $q\left([ S](t,x)-[ S](t,x-\Delta x)\right)$ is of order $1$ and converges to $\tilde q\nabla_x [S]$ as $\dx\to0$.

If we assume, however, that the contagion coefficient becomes large as the space-step $\dx$ vanishes, i.e., that $q$ is of order $1/\dx$, we also need to rescale the other parameters. Indeed, we see from equation~\eqref{EquationDiscrete1D1SidedWithP_Rewriting} that, in order not to get the singular limit $S(t,x)\equiv0$ as $\dx\to0$ and $q=O(1/\dx)$, we need the term $1-[S^0] e^{-pt}$ to be of order $O(\dx)$ as $\dx\to0$.  To fullfill thoses conditions, we make the following
%from equation~\eqref{Expression_S_Uniform_SIR} for $[S]$ when the parameters and initial conditions do not depend on space and time, that we recall here for convenience
%\begin{equation*}
%{}[S](t) =[S^0] e^{-\left(p+q\left(1-[S^0] \frac{1-e^{-pt}}{pt}\right)+r\right)t }
%\left(
%1
%+
%(r+q[R^0])
%\int_0^t e^{\left(q\left(1-[S^0] \frac{1-e^{-p\tau}}{p\tau}\right)+r\right)\tau }
%d\tau
%\right),
%\end{equation*}
%%\begin{equation*}
%%{}[S](t) =[S^0] e^{-(p+q+r)t + [S^0]q \frac{1-e^{-pt}}{p}}
%%\left(
%%1
%%+
%%(r+q[R^0])
%%\int_0^t e^{(q+r)\tau - [S^0]q \frac{1-e^{-p\tau }}{p}}d\tau
%%\right),
%%\end{equation*}
%We see from this expression that, in order not to get a singular limit as $\dx\to0$ and $q=O(1/\dx)$, we need the term $1-[S^0] \frac{1-e^{-p\tau}}{pt}$ to be of order $O(\dx)$ as $\dx\to0$.  To fullfill thoses conditions, we make the following 
\\
\textit{Rescaling Assumption 2.}
{ Assume that $q_k(t)>0$ and that the ratio $\frac{r_k(t)}{q_k(t)}$ does not depend on time.} Further assume that the parameters and initial conditions scale as
\begin{equation}\label{Assumptions2}
q=\frac{\tilde q(t,x;\dx)}{\dx}>0,
\quad
p= \tilde p(t,x;\dx) \dx,
\quad 
r=\tilde r,
\quad
[I^0(x)]=[\tilde I^0(x;\dx)]\dx,
\quad
[R^0(x)]=[\tilde R^0(x;\dx)]\dx,
\end{equation}
where the ``tilde functions'' are assumed to converge locally uniformly as $\dx\to0$.
%We also set $[\tilde S^0(x)]=1-([\tilde I^0(x)]+[\tilde R^0(x)])\dx$, and allow $r$ to be of order $1$ as $\dx\to0$, i.e., we write $r=\tilde r$.
%

%Let us recall that, under these assumptions, the stochastic process~\eqref{Assumption_1D1S_WithRecovery_Intro} is defined as follows:
%\begin{subequations}\label{Assumption_1D1S_WithRecovery_Intro_PDEresc}
%\begin{equation}\label{Assumption_1D1S_WithRecovery_1_Intro_PDEresc}
% \P\left(I_k^{n+1}\big\vert \bm{X}^n \right)=
%\left\{\begin{aligned}
%&\dt\left(p_k^n+q_k^n \mathds{1}_{I_{k-1}^n}\right),&&\text{if }x_k^n=s,\\
%& 1-r_k^n\dt,&&\text{if }x_k^n=i,\\
%& 0 ,&&\text{if }x_k^n=r,
%\end{aligned}\right.
%\end{equation}
%and
%\begin{equation}\label{Assumption_1D1S_WithRecovery_2_Intro_PDEresc}
% \tag*{(\ref{Assumption_1D1S_WithRecovery_Intro}b)}
%\P\left(R_k^{n+1}\big\vert \bm{X}^n \right)=
%\left\{\begin{aligned}
%&0,&&\text{if }x_k^n=s,\\
%& r_k^n\dt,&&\text{if }x_k^n=i,\\
%& 1 ,&&\text{if }x_k^n=r,
%\end{aligned}\right.
%\end{equation}
%\end{subequations}

\paragraph*{} 
Since we assume that $\frac{r(t,x)}{q(t,x)}$ does not depend on time, equation~\eqref{EquationDiscrete1D1SidedWithP_Rewriting} boils down to
{
\begin{equation}\label{EquationDiscrete1D1SidedWithP_Rewriting_BIS}
{}
[S_k]'
+q_ke^{-\int_0^tp_k}[S_k^0]\left([S_k]-[S_{k-1}]\right)
+\left(p_k+q_k\left(1-e^{-\int_0^tp_k}[S_k^0]\right)+r_k\right)[S_k]
 = e^{-\int_0^tp_k}[S_k^0]\left(q_k[R_{k-1}^0]+r_k\right).
\end{equation}
}
Under Rescaling Assumptions~2, the terms in~\eqref{EquationDiscrete1D1SidedWithP_Rewriting_BIS} have the following limit for any fixed $t\geq0$ as $\dx\to0$:
\begin{gather*}
[S^0]\sim1\\
 q [S^0] e^{-\int_0^tp}\big([S](t,x)-[S](t,x-\dx)\big)
 \sim
 \tilde q \D_x [S](t,x),
 \\
p+q\big(1- e^{-\int_0^tp}[S^0]\big)+r 
\sim 
\tilde{q}([\tilde I^0(x)]+[\tilde R^0(x)]+\int_0^t \tilde p)+\tilde{r},
\\
 e^{-\int_0^tp}[S^0]\big(q[R^0]+r\big)
\sim
\tilde q[\tilde R^0]+\tilde r,
\end{gather*}
and so the limiting equation satisfied by $[S(t,x)]$ is%\note{should we write a rigourous proof of the convergence ?}
\begin{equation}
\label{Limiting_PDE}\left\{\begin{aligned}
&\D_t [S](t,x)+\tilde{q}(t,x)\D_x [S]+\left[\tilde{q}\left([\tilde I^0](x)+[\tilde R^0](x)+\int_0^t \tilde p(t,x)\right)+\tilde r(t,x)\right][S]=\tilde q[\tilde R^0]+\tilde r,\\
&[S](0,x)=1.
\end{aligned}\right.\end{equation}
%, as $\dx\to0$, $[S]$ satisfies the limiting equation
%\begin{equation*}
%\D_t [S]+\tilde{q}\nabla_x [S]+\Big( \tilde{q}([\tilde I^0]+\tilde p)+{r}\Big)[S]-\tilde q[\tilde R^0]- r=0,
%\end{equation*}

The convergence of the solution of~\eqref{EquationDiscrete1D1SidedWithP_Rewriting_BIS} to the solution of the limiting PDE~\eqref{Limiting_PDE} as $\dx\to0$ under \textit{Rescaling Assumptions 2} is illustrated in \autoref{fig:SpaceContinuousLimits_PDE}. {The PDE~\eqref{Limiting_PDE} is solved numerically using a standard explicit finite-difference Euler scheme.}

Equation~\eqref{Limiting_PDE} can be solved using the method of the caracteristics. For example, assuming for simplicity that there is no recovery (i.e., $[\tilde R^0]\equiv \tilde r\equiv 0$), no source term (i.e., $p\equiv 0$) and that $\tilde q(t,x)\equiv \tilde q$ is contant, the solution reads
\begin{equation}\label{PDE_SolutionExplicit_SI}
S(t,x)= e^{-\int_{x-\tilde q t}^x [\tilde I^0](s)ds}.
\end{equation}

%\begin{figure}
%\center
%\begin{subfigure}[b]{0.49\linewidth}
%\includegraphics[width=\linewidth]{Numerics/Figures/Space_Continuous_Limit_PDE/251020/plot_13_53_32.pdf}  
%  \caption{$\dx=0.1$}
%\end{subfigure}
%\begin{subfigure}[b]{0.49\linewidth}
%\includegraphics[width=\linewidth]{Numerics/Figures/Space_Continuous_Limit_PDE/251020/plot_13_53_40.pdf}
%\caption{$\dx=0.01$}
%  \end{subfigure}
%  \caption{Convergence towards PDE limit under \textit{Assumptions 2}.
%Snapshot at $t=2$ of $[S_k(t)]$ on a segment $k=0,\dots, \lfloor\frac{10}{\dx}\rfloor$, with $\dx>0$, so that $x\in(0,10)$. 
%Blue solid line represents the solution of \eqref{EquationDiscrete1D1SidedWithP_Rewriting} ; Orange solid line represents the solution of the limiting PDE~\eqref{Limiting_PDE}. Choice of parameters and initial conditions :  $\tilde p(x)=0.1+\frac{0.2x}{10}$, $\tilde q(x)=1+\frac{x}{10}$, $\tilde r(x)=0.3+\frac{0.5x}{10}$, $[\tilde I^0](x)=0.2+\frac{0.5x}{10}$, $[\tilde R^0](x)=0.5-\frac{0.3x}{10}$. The functions are defined by~\eqref{Assumptions2} }
% \label{fig:SpaceContinuousLimits_PDE}
%\end{figure}
\begin{figure}
\center
\includegraphics[width=0.6\linewidth]{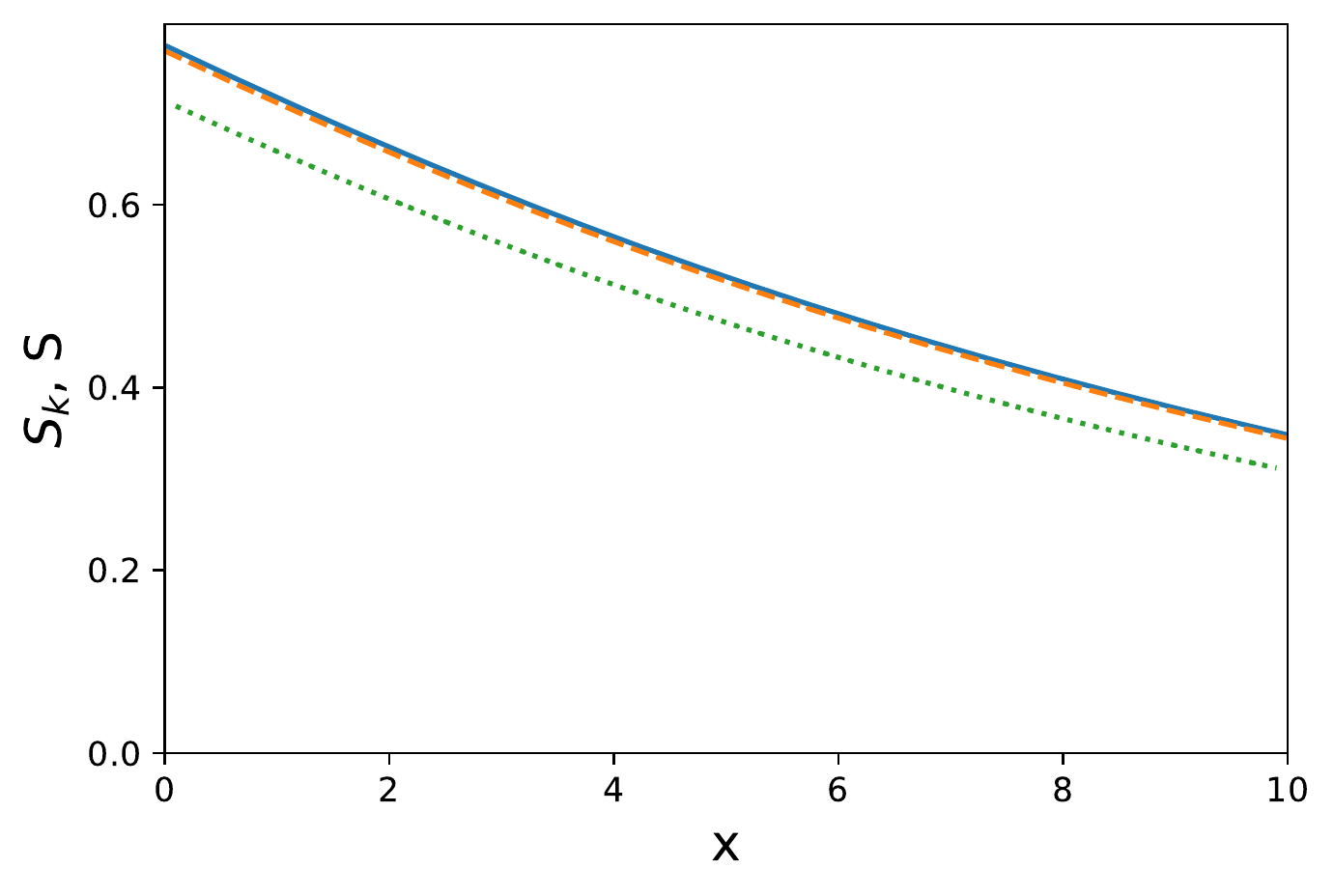}  
  \caption{Convergence towards PDE limit~\eqref{Limiting_PDE} under \textit{Rescaling Assumptions 2}.
Snapshot at $t=2$ of $[S_k(t)]$ on a segment $k=0,\dots, \lfloor\frac{10}{\dx}\rfloor$, with $\dx>0$, so that $x\in(0,10)$. 
Blue solid line represents the solution $S(t,x)$ of the limiting PDE~\eqref{Limiting_PDE} ; Green dotted line and orange dashed line represent the solution $[S_k](t)$ of  \eqref{EquationDiscrete1D1SidedWithP_Rewriting_BIS} for $\dx=0.1$ and $\dx=0.01$ respectively. Choice of parameters and initial conditions :  $\tilde p(x)=0.1+\frac{0.2x}{10}$, $\tilde q(x)=1+\frac{x}{10}$, $\tilde r(x)=0.3+\frac{0.5x}{10}$, $[\tilde I^0](x)=0.2+\frac{0.5x}{10}$, $[\tilde R^0](x)=0.5-\frac{0.3x}{10}$. The parameters $p$, $q$, $r$ and initial conditions $[S^0]$, $[I^0]$, $[R^0]$ are then defined by the rescaling~\eqref{Assumptions2}.}
 \label{fig:SpaceContinuousLimits_PDE}
\end{figure}

\subsection{Discussion}
The two space-continuous models proposed above are obtained by different rescalings, each of which corresponding to a different modeling situation:
\begin{itemize}
\item The {(integrodifferential)} ODE limit~\eqref{EquationDiscrete1D1Sided_LimitingODE} discussed in Section~\ref{sec:limitingODE} corresponds to the case where the contagion between two consecutive nodes is driven by their discrete distance on the graph rather than by their metric distance on the line. In this case, the propagation of the epidemic propagates at finite speed $q$ on the graph.
When the rescaling factor $\dx$ goes to $0$, any interval on the real line contains infinitely many nodes, therefore the epidemics do not propagate on the line in finite time. This explains why the obtained limiting {(integrodifferential)} ODE~\eqref{EquationDiscrete1D1Sided_LimitingODE} is spatially decoupled. The {(integrodifferential)} ODE limit~\eqref{EquationDiscrete1D1Sided_LimitingODE} is relevant from the modeling point of view when the contagion phenomenon occurs at a small space-scale.
\item In contrast, the PDE limit~\eqref{Limiting_PDE} obtained in Section~\ref{sec:limitingPDE} corresponds to a contagion phenomenon that is driven by the metric distance between nodes rather than on their discrete distance on the graph. In this case, the propagation of the epidemic occurs at a finite speed $\tilde q$ on the line, as expressed by the explicit form of the solution in~\eqref{PDE_SolutionExplicit_SI}. It is then natural to consider the parameter $p$ and initial conditions $[I^0]$ and $[R^0]$ as space-densities, as suggested by the Rescaling Assumptions~\eqref{Assumptions2}. The PDE limit~\eqref{Limiting_PDE} has the advantage to qualitatively reproduce the spatial propagation at finite speed occurring in the discrete model~\eqref{EquationDiscrete1D1SidedWithP_Rewriting}.
This PDE rescaling is also relevant when the contagion rate outweighs the source term (i.e., $p\ll q$).
\end{itemize}

%two distinct points $(x,x')$ on the line are separated by an infinite number of nodes. However, since the contagion between nodes occurs at finite speed $q$, two distinct points on the line cannot influence each other in finite time. 

%Let us briefly give an interpretation on why we need to assume that $[I^0]$ is of order $\dx$ in order not to have $S\equiv0$ as $\dx\to0$. The infection from one individual propagates at speed of order $1$, and the probability that there exists at least one infected at initial time in an 
%
%To understand why we assume $[I^0]$
%Note that the probability that a certain interval $(x_1,x_2)$ contains at least one infected inidividual at initial time is of order 
%interval $(a,b)$ is $\frac{b-a}{\dx}*[I^0]$.  First, since we expect the 
%Here, an epidemic spreads at speed of order $1$, therefore 
%If $[I^0]=O(\dx)$ does not hold, then 
%the source term 
%The intuitive idea behind the fact that $p$ is assumed of order $\dx$ is that 
%If $p$ is not $O(\dx)$ then any finite

%It is intuitively natural that $p(x)$ and $[I^0]$ are assumed to be of order $O(\dx)$ since they represent a density 

\section{Final remarks}\label{sec:conclusion}

In the present paper, we establish an \emph{exact deterministic description} of stochastic SIR-Bass epidemics on 1D lattices by showing that the probability of infection at a given point in space and time can be obtained as the solution of a deterministic ODE system on the lattice. Our framework allows any type of heterogeneity on the parameters and initial conditions. In addition, our results precisely describe the spatio-temporal dynamics at a local scale, which is still a major challenge in epidemiology~\cite{Pellis2015b}.

%We then obtain explicit formulae on some specific situations, namely, the spatially homogeneous case, the patient zero problem, or the presence of a time-varying point source. This allows us to shed light on the creation and propagation of spatial clusters of infections.

%We discuss in Section~\ref{sec:1D1S_SpaceContinuous} that the model can give rise to two different space-continuous limits depending on how the parameters are rescaled as the space-step tends to 0.

%Finally, we give in Section~\ref{sec:1D2S} a general formula that allows us to extend our results to epidemics allowed to spread in both directions.

%\paragraph*{}
Our results focus on 1D lattices (i.e., finite, semi-infinite, and infinite lines) where the infection only occurs from a node to its direct neighbors. This is the first step towards more realistic contact networks, such as 2D lattices, scale-free networks, small-world networks, as well as other types of networks that are relevant from the application point of view in the context of epidemiology or marketing.
Most likely, an exact deterministic description such as the one obtained in this paper cannot be obtained for 2D lattices or other contact networks\footnote{When the underlying undirected graph contain loops, such as in $n$D lattices for $n\geq2$, it is not possible even for the Bass model to find a finite system of equations on the marginals as in~\autoref{th:1D1SWithP}, see~\cite{Sharkey2015}.}.
We believe however that the analytical tools introduced in the present paper will be useful for further analysis of epidemiological models in complex networks under the pair-approximation assumption~\cite{House}.

\paragraph{Acknowledgement.}
The authors are deeply thankful to Professor Steven Schochet and Professor Eitan Tadmor for useful discussions.

\bibliographystyle{abbrv}
\bibliography{library}

\appendix
\section{Appendix.}

\subsection{Proof of~\autoref{th:coro_patient_zero}}\label{app:proof_line}
Expression~\eqref{Explicit_withP_Uk(t)} can be verified by direct substitution in~\eqref{EquationDiscrete1D1SidedWithP}. 
To show~\eqref{Explicit_Esperance_1D1S}, we first notice that  $\mathbb{E}[N_K^n]= \sum_{k=1}^{K}[I_k](t)$. Using $[I_k](t)=1-[S_k](t)$ and expression~\eqref{Explicit_withP_Uk(t)}, we compute
\begin{align*}
\mathbb{E}[N_K^n]
&=\sum_{k=1}^{K}\left(1-e^{-(p+q)t}\sum\limits_{l=0}^{k-1} \frac{\left(q \frac{1-e^{-pt}}{p}\right)^l}{l!}\right)\\
&=K-e^{-(p+q)t} \sum_{k=1}^{K}\sum\limits_{l=0}^{k-1} \frac{\left(q \frac{1-e^{-pt}}{p}\right)^l}{l!}\\
&=K-e^{-(p+q)t} \sum\limits_{l=0}^{K-1} \sum_{k=l+1}^{K}\frac{\left(q \frac{1-e^{-pt}}{p}\right)^l}{l!}\\
&=K-e^{-(p+q)t} \sum\limits_{l=0}^{K-1} (K-l)\frac{\left(q \frac{1-e^{-pt}}{p}\right)^l}{l!}.
\end{align*}
We obtain~\eqref{Explicit_Esperance_1D1S} from the change of variable $l'=K-l$ in the above expression.

To show~\eqref{ExpectedFraction}, note that above expression gives
\begin{align*}
\frac{\mathbb{E}[N_K^n]}{K}=1-e^{-(p+q)t} \sum\limits_{l=0}^{K-1} \frac{\left(q \frac{1-e^{-pt}}{p}\right)^l}{l!}+\frac{1}{K}\sum\limits_{l=0}^{K-1} l\frac{\left(q \frac{1-e^{-pt}}{p}\right)^l}{l!}.
\end{align*}
Then, we conclude that, as $K\to+\infty$,
$$\sum\limits_{l=0}^{K-1} \frac{\left(q \frac{1-e^{-pt}}{p}\right)^l}{l!}\sim e^{q \frac{1-e^{-pt}}{p}},$$
and that
\begin{align*}
\frac{1}{K}\sum\limits_{l=0}^{K-1} l\frac{\left(q \frac{1-e^{-pt}}{p}\right)^l}{l!}
= \frac{q \frac{1-e^{-pt}}{p}}{K}\underbrace{\sum\limits_{l=1}^{K-1} \frac{\left(q \frac{1-e^{-pt}}{p}\right)^{l-1}}{(l-1)!}}_{=O(1)}\to 0.
\end{align*}

\subsection{Proof of~\autoref{th:coro_patient_zero_SIR}}\label{sec:Proof_of_th:coro_patient_zero_SIR}

A straightforward substitution shows that expression~\eqref{Explicit_SIR_Sk(t)} solves equation~\eqref{EquationDiscrete1D1SidedWithP_R_Dt} and satisfies the initial conditions~\eqref{Initial_Condition_PatientZero_1D1S_General}.

To prove~\eqref{Explicit_SIR_Ek}, 
first note that $\mathbb{E}[N_K^n]= \sum_{k=1}^{K}[S_k(t)]$, and so, using~\eqref{Explicit_SIR_Sk(t)}, we find
\begin{align*}
\mathbb{E}[N_K^n]
&=\sum_{k=1}^{K}\left(1-\left(\frac{q}{q+r}\right)^k\left(1- e^{-(q+r)t}\sum\limits_{l=0}^{k-1} \frac{\left((q+r) t\right)^l}{l!}\right)\right)\\
&=K- \frac{q}{q+r}\frac{1-\left(\frac{q}{q+r}\right)^K}{1-\frac{q}{q+r}}- e^{-(q+r)t} \sum\limits_{k=1}^{K}\sum\limits_{l=0}^{k-1} \left(\frac{q}{q+r}\right)^k\frac{\left((q+r) t\right)^l}{l!}\\
&=K- \frac{q}{r}\left(1-\left(\frac{q}{q+r}\right)^K\right)- e^{-(q+r)t} \sum\limits_{l=0}^{K-1}\frac{\left((q+r) t\right)^l}{l!} \left(\frac{q}{q+r}\right)^l\frac{1-\left(\frac{q}{q+r}\right)^{K-L+1}}{1-\frac{q}{q+r}}.
\end{align*}
Then, we get~\eqref{Explicit_SIR_Ek} from a change of variable $l'=K-l$ in the above expression.

\subsection{Spatial Markovian property - proof of~\autoref{rmk:Markov_1D2S_Proof}}\label{sec:Appendix_SpatialMarkov}
The proof of~\autoref{th:1D2SWithP_R} relies on the spatial Markovian property stated in~\autoref{rmk:Markov_1D2S}. This section is devoted to the proof of this result. For the sake of generality, we actually prove the following more general statement.
\begin{lemma}\label{rmk:Markov_1D2S}
Assume that the individuals are placed on a 1D two-sided lattice (see~\eqref{Structure_Assumption_1D2S}) and that the initial conditions are uncorrelated, see~\eqref{hyp:initial_cond_uncor}.

For any integer $k\in\mathbb{Z}$, denote by $X^n_{k-}:=(x^n_l)_{l<k}$ and $X^n_{k+}:=(x^n_l)_{l>k}$ the left and right neighbors of $k$. We have that
\begin{equation}\label{eq:Markov_1D2S}
x^n_{k-1}\independent x^n_{k+1}\vert (x_k^m)_{0\leq m<n},\qquad \forall k\in\mathbb{Z},
\end{equation}
meaning that the state of the left and right neighbors of $k$ are independent under the conditioning with respect to the state of $k$ for all preceeding times. In particular,
\begin{equation}\label{Markov_1D2S_1}
x_{k-1}^n\independent x_{k+1}^n\vert S_k^n.
\end{equation}
%and
%\begin{equation}\label{Markov_1D2S_2}
%x_{k-1}^n\independent x_{k+1}^n\vert \overline{B_k^m},\qquad \forall m=1,2\dots
%\end{equation}
\end{lemma}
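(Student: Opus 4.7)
My plan is to prove the spatial Markov property by induction on $n$, carrying the stronger invariant that the \emph{full} left and right histories
\[
L_n := (x_l^m)_{l<k,\,0\le m\le n}, \qquad R_n := (x_l^m)_{l>k,\,0\le m\le n}
\]
are conditionally independent given the $k$-trajectory $H_n := (x_k^m)_{0\le m<n}$. The base case $n=0$ is immediate since $H_0$ is empty and the $(x_l^0)_{l \ne k}$ are mutually independent by the uncorrelated-initial-condition hypothesis~\eqref{hyp:initial_cond_uncor}.

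For the inductive step, I would exploit the local structure of the one-step transition~\ref{Assumption_1D2S_WithRecovery_1}: conditionally on $\bm{X}^n$, the updates $(x_l^{n+1})_{l\in \mathcal{K}}$ are mutually independent and each $x_l^{n+1}$ depends only on the triple $(x_{l-1}^n, x_l^n, x_{l+1}^n)$. Grouping the updates into left ($l<k$), middle ($l=k$), and right ($l>k$) blocks, and combining with the inductive hypothesis $L_n \independent R_n \mid H_n$, the joint law at time $n+1$ can be written as a product of a left piece supported on $L_{n+1}\cup\{x_k^n\}$, a right piece supported on $R_{n+1}\cup\{x_k^n\}$, and the middle transition factor $\mathbb{P}(x_k^{n+1}\mid x_{k-1}^n, x_k^n, x_{k+1}^n)$. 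Summing out $(x_{k-1}^n, x_{k+1}^n)$ against the factored inductive law then yields $L_{n+1}\independent R_{n+1}\mid H_{n+1}$.

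The hard part will be the middle transition factor, which is additive in $\mathds{1}_{x_{k-1}^n=i}$ and $\mathds{1}_{x_{k+1}^n=i}$ and therefore does not split multiplicatively at finite $\dt$. To handle it I would pass to the continuous-time limit $\dt\to 0$ used throughout the paper: the probability that $x_k$ survives a time step factors as
\[
1-\dt\!\left(p_k+q_k^L\mathds{1}_{x_{k-1}^n=i}+q_k^R\mathds{1}_{x_{k+1}^n=i}\right)=e^{-\dt p_k}\,e^{-\dt q_k^L\mathds{1}_{x_{k-1}^n=i}}\,e^{-\dt q_k^R\mathds{1}_{x_{k+1}^n=i}}+O(\dt^2),
\]
reflecting the Poisson-thinning identity that the total infection intensity at $k$ decomposes into three independent Poisson rates (external, from $k-1$, and from $k+1$), which can be generated separately without any coupling of the two sides. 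This is also the step where the 1D-tree structure enters crucially: there is no cycle that would force information to cross from left to right through any node other than $k$.

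Finally, the lemma's statement follows by marginalizing the strong independence $L_n \independent R_n \mid H_n$ onto the pair $(x_{k-1}^n, x_{k+1}^n)$, and the particular case $x_{k-1}^n \independent x_{k+1}^n \mid S_k^n$ is immediate from the monotonicity of the SIR dynamics, which forces the event $S_k^n$ to coincide with the specific trajectory $(x_k^m = s)_{0 \leq m \leq n}$, one realization of the conditioning path already covered by the general statement.
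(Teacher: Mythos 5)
Your overall architecture --- induction on $n$ with the stronger invariant that the full half-lattice histories $L_n$ and $R_n$ are conditionally independent given the trajectory of $k$, a base case from~\eqref{hyp:initial_cond_uncor}, and a factorization of the one-step transition into left, middle, and right blocks --- is the same skeleton as the paper's proof in Appendix~\ref{sec:Appendix_SpatialMarkov}, but you are more careful than the paper in one essential respect: you isolate the middle transition factor $\P\(x_k^{m+1}\vert x_{k-1}^m,x_k^m,x_{k+1}^m\)$ as the only place where the two sides can couple. The paper runs the induction only on the pair $(x_{k-1}^n,x_{k+1}^n)$ and, in passing from~\eqref{1D2S_Markov_InductionAssumption} to~\eqref{1D2S_Markov_Proof3}, silently enlarges the conditioning from $(x_k^m)_{m<n}$ to $(x_k^m)_{m\le n}$; since $x_k^n$ is a common effect of $x_{k-1}^{n-1}$ and $x_{k+1}^{n-1}$, that is precisely where the coupling you worry about enters, and the paper never confronts it. Your stronger invariant is also genuinely needed to make the left/right block factorization close under the induction, since the update of $x_{k\pm1}$ involves $x_{k\pm2}^n$, about which the paper's weaker hypothesis says nothing.

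There is, however, a genuine gap: your Poisson-thinning fix covers only the survival steps of $k$'s trajectory ($x_k^m=s$ and $x_k^{m+1}=s$), whose factor $1-\dt\big(p_k+q_k^L\mathds{1}+q_k^R\mathds{1}\big)$ indeed splits multiplicatively up to $O(\dt^2)$. If the conditioning trajectory contains an infection step ($x_k^m=s$, $x_k^{m+1}=i$), the corresponding likelihood factor is $\dt\big(p_k+q_k^L\mathds{1}_{x_{k-1}^m=i}+q_k^R\mathds{1}_{x_{k+1}^m=i}\big)$, which is additive and does not factor even asymptotically; thinning does not help, because conditioning on the time at which the superposed clock rings couples the three component clocks (explaining away). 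In fact~\eqref{eq:Markov_1D2S} fails for such trajectories: take $p=0$, $q^L=q^R=q$, no recovery, $x_k^0=s$, and $x_{k\pm1}^0$ i.i.d.\ with $\P(i)=\P(s)=\tfrac12$; conditioning on $(x_k^0,x_k^1)=(s,i)$ gives posterior weights proportional to $2,1,1,0$ on $(x_{k-1}^0,x_{k+1}^0)\in\{(i,i),(i,s),(s,i),(s,s)\}$, which is not a product measure, and the correlation persists to $(x_{k-1}^2,x_{k+1}^2)$. So the general statement cannot be proved as written. What your argument does establish, in the limit $\dt\to0$ where the per-step $O(\dt^2)$ errors vanish (at finite $\dt$ even the survival factors do not split exactly, the discrepancy being $\dt^2q^Lq^R$), is the restriction to the all-susceptible trajectory of $k$, i.e.\ precisely~\eqref{Markov_1D2S_1} --- fortunately the only instance used in the proof of \autoref{th:1D2SWithP_R}. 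You should say explicitly that you prove only that case.
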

\begin{proof}
Let $(\tilde x_{k-1}^n,\tilde x_{k+1}^n)\in\{s,i,r\}^2$ be a possible realization of $(x_{k-1}^n,x_{k+1}^n)$, and let us denote by $Y_-^n:=\left\{ x_{k-1}^n=\tilde x_{k-1}^n\right\}$ and $Y_+^n:=\left\{ x_{k+1}^n=\tilde x_{k+1}^n\right\}$ the events that $x_{k+1}^n$ and $x_{k-1}^n$ equal this realization respectively. For clarity, we also dentote by $X_k^n:=(x_k^m)_{0\leq m<n}$.

Let us prove by induction on $n\geq0$ that
\begin{equation}\label{1D2S_Markov_InductionAssumption}
\P\(Y_-^n\cap Y_+^n\vert X_k^n\)= \P\(Y_-^n\vert X_k^n\)\P\(Y_+^n\vert X_k^n\).
\end{equation}
By assumption on the initial conditions, this property holds for $n=0$. Assume that the above property holds for some $n\geq 0$.
Then, conditionning with respect to $Y_-^n\cap Y_+^n$ and using~\eqref{RelationClasssique}, we have
\begin{equation}\label{1D2S_Markov_Proof1}
\P\(Y_-^{n+1}\cap Y_+^{n+1}\vert X_k^{n+1}\)= \P\(Y_-^{n+1}\cap Y_+^{n+1}\vert X_k^{n+1},Y_-^{n}\cap Y_+^{n}\)\P\(Y_-^{n}\cap Y_+^{n}\vert X_k^{n+1}\)
\end{equation}
From the assumption that the $(x_k^{n+1})_{k\in\mathcal{K}}$ are all independent if we condition by $(x_k^n)_{k\in\mathcal{K}}$, we deduce
\begin{equation*}
\P\(Y_-^{n+1}\cap Y_+^{n+1}\vert X_k^{n+1},Y_-^{n}\cap Y_+^{n}\)=\P\(Y_-^{n+1}\vert X_k^{n+1},Y_-^{n}\cap Y_+^{n}\)\P\(Y_+^{n+1}\vert X_k^{n+1},Y_-^{n}\cap Y_+^{n}\).
\end{equation*}
Moreover, from the assumption that $\mathcal{K}$ is a 1D two-side lattice, we further deduce that 
\begin{equation}\label{1D2S_Markov_Proof2}
\P\(Y_-^{n+1}\cap Y_+^{n+1}\vert X_k^{n+1},Y_-^{n}\cap Y_+^{n}\)=\P\(Y_-^{n+1}\vert X_k^{n+1},Y_-^{n}\)\P\(Y_+^{n+1}\vert X_k^{n+1}, Y_+^{n}\).
\end{equation}
Besides, the induction assumption~\eqref{1D2S_Markov_InductionAssumption} implies that
\begin{equation}\label{1D2S_Markov_Proof3}
\P\(Y_-^{n}\cap Y_+^{n}\vert X_k^{n+1}\)= \P\(Y_-^{n}\vert X_k^{n+1}\)\P\( Y_+^{n}\vert X_k^{n+1}\).
\end{equation}
Injecting \eqref{1D2S_Markov_Proof2} and \eqref{1D2S_Markov_Proof3} into \eqref{1D2S_Markov_Proof1}, we derive
\begin{align*}
\P\(Y_-^{n+1}\cap Y_+^{n+1}\vert X_k^{n+1}\)
&= \P\(Y_-^{n+1}\vert X_k^{n+1},Y_-^{n}\)\P\(Y_-^{n}\vert X_k^{n+1}\)\P\(Y_+^{n+1}\vert X_k^{n+1}, Y_+^{n}\)\P\( Y_+^{n}\vert X_k^{n+1}\)\\
&= \P\(Y_-^{n+1}\vert X_k^{n+1}\)\P\(Y_+^{n+1}\vert X_k^{n+1}\),
\end{align*}
which proves that~\eqref{1D2S_Markov_InductionAssumption} holds at the rank $n+1$.

We have thus proved that~\eqref{1D2S_Markov_InductionAssumption} holds for all $n\geq0$, from which we immediately deduce~\eqref{eq:Markov_1D2S}.

 Property~\eqref{Markov_1D2S_1} is deduced as a particular instance of~\eqref{eq:Markov_1D2S}.
\end{proof}

\begin{remark}
In constrast with \autoref{rmk:Markov_1D2S}, we have that
\begin{equation*}
x_{k-1}^n\not \independent x_{k+1}^n\vert {I_k^n}.
\end{equation*}
The reason is that the condition $I_k^n$ only prescribes the state of $x_k^n$ at time $t^n$ and not the states $(x_k^m)_{0\leq m<n}$ at the preceeding times since it does not indicate at what time $x_k$ becomes infected.
%
%%\note{add a simple example with explicit formula?}
%To see more precisely why $x_{k-1}^n$ and $x_{k+1}^n$ are not independent when condionned with respect to $A_k^n$, note that, on the one hand,
%\begin{align*}
%\P({A_k^n}\cap \overline{A_{k-1}^n}\cap\overline{A_{k+1}^n})
%&=\sum\limits_{m=0}^{n}\P({B_k^m}\cap \overline{A_{k-1}^n}\cap\overline{A_{k+1}^n}) \\
%&=\sum\limits_{m=0}^{n} \P(\overline{A_{k-1}^n}\cap\overline{A_{k+1}^n}\vert {B_k^m})\P({B_k^m})\\
%&=\sum\limits_{m=0}^{n} \P(\overline{A_{k-1}^n}\vert {B_k^m})\P(\overline{A_{k+1}^n}\vert {B_k^m})\P({B_k^m})
%%\\
%%&= \sum\limits_{m=0}^{n} \frac{\P(\overline{A_{k-1}^n}\cap {B_k^m})\P(\overline{A_{k+1}^n}\cap {B_k^m})}{\P({B_k^m})},\\
%%&=\sum\limits_{m=0}^{n} \frac{\left[\P(\overline{A_{k-1}^n}\cap \overline{A_k^{m-1}})-\P(\overline{A_{k-1}^n}\cap \overline{A_k^m})\right]\left[\P(\overline{A_{k+1}^n}\cap \overline{A_k^{m-1}})-\P(\overline{A_{k+1}^n}\cap \overline{A_k^m})\right]}{\P(\overline{A_k^{m-1}})-\P(\overline{A_k^m})},
%\end{align*}
%whereas, on the other hand, under the assumption that
%\begin{equation*}
%x_{k-1}^n\independent x_{k+1}^n\vert {A_k^n},
%\end{equation*}
%we would have
%\begin{align*}
%\P({A_k^n}\cap \overline{A_{k-1}^n}\cap\overline{A_{k+1}^n})
%&=\sum\limits_{m=0}^{n} \P(\overline{A_{k-1}^n})\P(\overline{A_{k+1}^n})\P({B_k^m})\\
%&=\P( \overline{A_{k-1}^n}\vert {A_k^n})\P(\overline{A_{k+1}^n}\vert {A_k^n})\P({A_k^n})\
%\end{align*}

\end{remark}

\end{document}